 \newbox \abstractbox
\renewenvironment{abstract}{\global\setbox\abstractbox=\vbox\bgroup
 \hsize=\textwidth
  \vskip 1.2cm
  \noindent\unskip \textbf{Abstract.}
 }
 {
 \egroup}
\def\@settitle{%
  \bgroup
  \centering
  \vglue1cm
  \fontsize{12}{15}\fontseries{b}\selectfont
  \@title
  \vskip 20pt plus 6pt minus 8pt
  \egroup
}
\def\@setauthors{%
  \begingroup
  \trivlist
  \centering 
 \normalsize\@topsep30\p@\relax
  \advance\@topsep by -\baselineskip
  \item\relax
  \andify\authors
 {\rmfamily\authors}%
  \endtrivlist
  \endgroup
}
\def\@setaddresses{\par
  \nobreak \begingroup
\normalsize
  \def\author##1{\nobreak\addvspace\bigskipamount}%
  \def\\{\unskip, \ignorespaces}%
  \interlinepenalty\@M
  \def\address##1##2{\begingroup
    \par\addvspace\bigskipamount\noindent
    \@ifnotempty{##1}{(\ignorespaces##1\unskip) }%
    {\ignorespaces##2}\par\endgroup}%
  \def\curraddr##1##2{\begingroup
    \@ifnotempty{##2}{\nobreak\indent{\itshape Current address}%
      \@ifnotempty{##1}{, \ignorespaces##1\unskip}\/:\space
      ##2\par}\endgroup}%
  \def\email##1##2{\begingroup
    \@ifnotempty{##2}{\nobreak\noindent{\itshape E-mail address}%
      \@ifnotempty{##1}{, \ignorespaces##1\unskip}\/:
       ##2\par}\endgroup}%
   \def\urladdr##1##2{\begingroup
    \@ifnotempty{##2}{\nobreak\indent{\itshape URL}%
      \@ifnotempty{##1}{, \ignorespaces##1\unskip}\/:\space
      \ttfamily##2\par}\endgroup}%
  \addresses
  \endgroup
}
  \renewcommand\section{\@startsection{section}{1} %
  \z@{.5\linespacing\@plus.7\linespacing}{.5\linespacing}
 {\normalfont\large\bfseries\boldmath}}
  \def\subsection{\@startsection{subsection}{2}%
  \z@{.5\linespacing\@plus.7\linespacing}{.2\linespacing}%
  {\raggedright\normalfont\bfseries\boldmath}}
\def\subsubsection{\@startsection{subsubsection}{3}%
  \z@{.5\linespacing\@plus.7\linespacing}{-.5em}%
  {\normalfont\bfseries}}
 \newtheorem{theorem}{Theorem}[section]
\newtheorem{lemma}[theorem]{Lemma}
\newtheorem{corollary}[theorem]{Corollary}
\newtheorem{proposition}[theorem]{Proposition}
\newtheorem{definition}[theorem]{Definition}
 \theoremstyle{definition}
\newtheorem{remark}[theorem]{Remark}
\numberwithin{equation}{section}
\newcommand{\eps}{\varepsilon}
\newcommand{\norm}[1]{\Vert#1\Vert}
\newcommand{\abs}[1]{\left\vert#1\right\vert}
\newcommand{\inner}[1]{\left(#1\right)}
\newcommand{\comi}[1]{\left<#1\right>}
\newcommand{\normm}[1]{{ \vert\kern-0.25ex \vert\kern-0.25ex \vert #1
		\vert\kern-0.25ex \vert\kern-0.25ex \vert}}
\begin{document}
 
\title[Radius of analyticity and Gevrey regularity  for  the  Boltzmann  equation]{On the radius of analyticity and Gevrey regularity  for  the  Boltzmann  equation}

\author[W.-X.Li, L.Liu and H.Wang]{Wei-Xi Li, Lvqiao Liu and Hao Wang}


\address[W.-X. Li]{School of Mathematics and Statistics,   \&  Hubei Key Laboratory of Computational Science,  Wuhan University, Wuhan 430072,  P. R. China} \email{wei-xi.li@whu.edu.cn}

\address[L. Liu]{School of Mathematics and Statistics, Anhui Normal University,  Wuhu 241002,  China}\email{lvqiaoliu@ahnu.edu.cn}

\address[H.Wang]{Faculty of Mathematics and Statistics \& Hubei Key Laboratory of Applied Mathematics,  Hubei University,  Wuhan 430062,   China} \email{wang-h@whu.edu.cn}

\keywords{Global-in-time  radius estimate,   Analyticity and Gevrey regularity,  Non-cutoff Boltzmann equation, Hypoelliptic estimate}
 
\subjclass[2020]{35Q20,35B65}

\begin{abstract}
	This paper investigates the non-cutoff Boltzmann equation for hard potentials in a perturbative setting.
		 We first establish a sharp short-time estimate  on the radius of analyticity and Gevrey regularity of mild solutions. Furthermore,   we  obtain a global-in-time radius estimate in  Gevrey space. The proof combines hypoelliptic estimates with the macro-micro decomposition.
\end{abstract}
 \maketitle


 \section{Introduction }

 The Boltzmann equation stands as the most classical and fundamental mathematical model in collisional kinetic theory, describing the behavior of a dilute gas at the macroscopic level. It is well recognized that kinetic theory links the microscopic and macroscopic scales. At the macroscopic level, where gases and fluids are treated as continua, their motion is governed by the compressible or incompressible Euler and Navier–Stokes equations. The hydrodynamic limits of the Boltzmann equation provide a crucial procedure for deriving the macroscopic Euler and Navier–Stokes equations from the mesoscopic Boltzmann description.

In this work, we are concerned with the transition of regularity in the hydrodynamic limit process. There have been extensive research devoted to the regularization effects in the Navier–Stokes equations. For instance, in the classical work by Foias and Temam \cite{MR1026858}, space-time analyticity for positive times was established via Fourier analytical methods under low-regularity initial data. This Fourier-based approach, along with the subsequently developed
more modern  techniques, has been widely applied to study the analytic smoothing effect of the Navier–Stokes equations and more general parabolic equations in various function spaces; see, e.g., \cite{MR4601187, MR4216598, MR1607936, MR1938147, MR3369268, MR2836112, MR2960037, MR1748305, MR2145938,MR575737,MR699168}. It should be noted that such regularization effect stems from the diffusion term; hence, an analogous parabolic smoothing effect cannot be expected for the Euler equations.

On the other hand, regularizing properties of the Boltzmann equation were first observed by Lions \cite{MR1278244} and later confirmed in \cite{MR1324404, MR1639275, MR1475459}. Here we particularly mention the recent works \cite{MR4612704, MR4930523} concerning the smoothing effect of the Boltzmann and Landau equations in the settings of analytic and optimal Gevrey-class regularity, where the estimates are local in time.

This work investigates the radius of analyticity or Gevrey-class regularity for solutions to the Boltzmann equation. For solutions with analytic or Gevrey-class regularity, we first establish a refined local-in-time estimate of the radius near time zero, which improves  the one given in \cite{MR4930523}. This estimate appears to be optimal, as it agrees  with observations from toy models. Moreover, we derive uniform lower bounds on the radius of analyticity or Gevrey-class regularity for all large times.

\subsection{Boltzmann  equation}
 
  The spatially inhomogeneous Boltzmann equation on the torus is given by
\begin{equation}\label{1}
\partial_t F + v \cdot \nabla_x F = Q(F, F), \quad F|_{t=0} = F_0,
\end{equation}
where $F(t, x, v)$ denotes the probability density function at position $x \in \mathbb{T}^3$, time $t \geq 0$, with velocity $v \in \mathbb{R}^3$. When $F = F(t, v)$ is independent of $x$, equation \eqref{1} reduces to the spatially homogeneous Boltzmann equation.

The Boltzmann collision operator on the right-hand side of \eqref{1} is a bilinear operator defined by
\begin{equation}\label{collis}
Q(G, F)(t, x, v) = \int_{\mathbb{R}^3} \int_{\mathbb{S}^2} B(v - v_*, \sigma)\big( G'_* F' - G_* F \big)  dv_*  d\sigma,
\end{equation}
where, throughout this paper, we use the standard abbreviations:
$F' = F(t, x, v')$, $F = F(t, x, v)$, $G'_* = G(t, x, v'_*)$, and $G_* = G(t, x, v_*)$.
The pairs $(v, v_*)$ and $(v', v'_*)$ represent the post and pre-collision velocities, respectively, satisfying the conservation of momentum and energy:
\begin{equation*}
v' + v'_* = v + v_*, \quad |v'|^2 + |v'_*|^2 = |v|^2 + |v_*|^2.
\end{equation*}
These relations yield the $\sigma$-representation for $\sigma \in \mathbb{S}^2$:
\begin{equation*}
\left\{
\begin{aligned}
v' &= \frac{v + v_*}{2} + \frac{|v - v_*|}{2} \sigma, \\
v'_* &= \frac{v + v_*}{2} - \frac{|v - v_*|}{2} \sigma.
\end{aligned}
\right.
\end{equation*}
The cross-section $B(v - v_*, \sigma)$ in \eqref{collis} depends on the relative speed $|v - v_*|$ and the deviation angle $\theta$, where
\begin{equation*}
\cos \theta = \frac{v - v_*}{|v - v_*|} \cdot \sigma.
\end{equation*}
Without loss of generality, we assume $B(v - v_*, \sigma)$ is supported on $0 \leq \theta \leq \pi/2$ (so that $\cos \theta \geq 0$) and takes the form
\begin{equation}\label{kern}
B(v - v_*, \sigma) = |v - v_*|^\gamma  b(\cos \theta),
\end{equation}
where $|v - v_*|^\gamma$ is the kinetic part with $-3 < \gamma \leq 1$, and $b(\cos \theta)$ is the angular part satisfying
\begin{equation}\label{angu}
0 \leq \sin \theta \, b(\cos \theta) \approx \theta^{-1 - 2s}
\end{equation}
for $0 < s < 1$. Here and throughout, $p \approx q$ means $C^{-1} q \leq p \leq C q$ for some generic constant $C \geq 1$. 
The angular part $b(\cos \theta)$ thus has a non-integrable singularity at $\theta = 0$:
\begin{equation*}
\int_0^{\pi/2} \sin \theta \, b(\cos \theta)  d\theta = +\infty.
\end{equation*}
The cases $-3 < \gamma < 0$, $\gamma = 0$, and $0 < \gamma \leq 1$ are referred to as soft potential, Maxwellian molecules, and hard potential, respectively.

This work studies solutions of the Boltzmann equation \eqref{1} near the normalized global Maxwellian
\begin{equation}\label{gmu}
\mu(v) = (2\pi)^{-3/2} e^{-|v|^2 / 2}.
\end{equation}
Setting $F(t, x, v) = \mu(v) + \sqrt{\mu(v)} f(t, x, v)$ and similarly for the initial datum $F_0$, the perturbation $f = f(t, x, v)$ satisfies
\begin{equation}\label{3}
\partial_t f + v \cdot \nabla_x f - \mathcal{L} f = \Gamma(f, f), \quad f|_{t=0} = f_0,
\end{equation}
where the linearized collision operator $\mathcal{L}$ and the nonlinear operator $\Gamma$ are given by
\begin{equation}\label{colli}
	\mathcal{L}f =\mu^{-\frac12}Q(\mu,\sqrt{\mu} f)
		+\mu^{-\frac12}Q(\sqrt{\mu}f,\mu)\  \textrm { and } \ \Gamma(f,g)=\mu^{-\frac12}Q(\sqrt{\mu}f,\sqrt{\mu}g).
\end{equation}
Since the Boltzmann collision term $Q(F, F)$ admits the five collision invariants $1$, $v$, and $|v|^2$, sufficiently regular and integrable solutions of \eqref{1} conserve mass, momentum, and energy. For simplicity, we therefore assume that $f(t, x, v)$ satisfies
\begin{equation} \label{law}
\int_{\mathbb{T}^3} \int_{\mathbb{R}^3} \big(1, v, |v|^2\big) \sqrt{\mu(v)}  f(t, x, v)  dv  dx = 0
\end{equation}
for all $t \geq 0$. In particular, if \eqref{law} holds initially, it remains valid for all $t > 0$.

\subsection{Notations and function spaces}\label{notafun}

Given two operators $Q_1$ and $Q_2$, we denote by $[Q_1, Q_2]$ their commutator:
\begin{equation*}
[Q_1, Q_2] = Q_1 Q_2 - Q_2 Q_1.
\end{equation*}
Moreover, we define the weight function $\comi \cdot := \big( 1 + |\cdot|^2 \big)^{\frac12}$.

Throughout the paper, we denote by $\hat f$ or $\mathscr F_x f$ the partial Fourier transform of $f(t,x,v)$ with respect to $x \in \mathbb{T}^3$:
\begin{equation*}
\hat{f}(t, k, v) = \mathscr F_x f(t, k, v) = \int_{\mathbb{T}^3} e^{-ik\cdot x} f(t, x, v)  dx, \quad k \in \mathbb{Z}^3,
\end{equation*}
where $k \in \mathbb{Z}^3$ is the Fourier dual variable of $x \in \mathbb{T}^3$. Similarly, $\mathscr F_{x,v} f$ denotes the full Fourier transform with respect to $(x,v)$, with $(k,\eta)$ being the dual variables.

For convenience, we denote by $\hat{\Gamma}(\hat{f},\hat{g})$ the partial Fourier transform of $\Gamma(f,g)$ defined in \eqref{colli}:
\begin{multline*}
 \hat{\Gamma}(\hat{f},\hat{g})(t,k,v) := \mathscr F_x \left[ \Gamma(f,g) \right](t,k,v) \\
 = \int_{\mathbb{R}^3} \int_{\mathbb{S}^{2}} B(v-v_*,\sigma)  \mu^{\frac12}(v_*) \left( \big[\hat{f}(v_*') * \hat{g}(v')\big](k) - \big[\hat{f}(v_*) * \hat{g}(v)\big](k) \right)  d\sigma  dv_*,
\end{multline*} 
where the convolutions are taken with respect to $k \in \mathbb{Z}^3$:
\begin{equation}\label{def:conv}
[\hat{f}(u) * \hat{g}(v)](k) := \int_{\mathbb{Z}^3_\ell} \hat{f}(t,k-\ell,u) \hat{g}(t,\ell,v)  d\Sigma(\ell),
\end{equation}
for any velocities $u,v \in \mathbb{R}^3$. Here, $d\Sigma(k)$ denotes the discrete measure on $\mathbb{Z}^3$:
\begin{equation*}
\int_{\mathbb{Z}^3} g(k)  d\Sigma(k) := \sum_{k \in \mathbb{Z}^3} g(k),
\end{equation*}
for any summable function $g$ on $\mathbb{Z}^3$.

When applying Leibniz's formula, it is convenient to work with the trilinear operator $\mathcal T$ defined by
\begin{equation}\label{matht}
\mathcal T(g,h,\omega) = \iint B(v-v_*,\sigma) \omega_* \left( g_*' h' - g_* h \right)  dv_*  d\sigma,
\end{equation}
where $B$ is given in \eqref{kern}, and $\omega$ depends only on $v$. The bilinear operator $\Gamma$ in \eqref{colli} is related to $\mathcal T$ by
\begin{equation}\label{trb}
\Gamma(g,h) = \mathcal T(g,h,\mu^{\frac12}).
\end{equation}
Similarly, we denote by $\hat{\mathcal{T}}(\hat{g}, \hat{h}, \omega)$ the partial Fourier transform of $\mathcal T(g,h,\omega)$ with respect to $x$:
\begin{multline*}\label{Foutri}
\hat{\mathcal{T}}(\hat{g}, \hat{h}, \omega)(k,v) = \mathscr F_x \left[ \mathcal T(g,h,\omega) \right](k,v) \\
= \iint B(v-v_*,\sigma) \omega(v_*) \left( [\hat{g}(v'_*) * \hat{h}(v')](k) - [\hat{g}(v_*) * \hat{h}(v)](k) \right)  dv_*  d\sigma,
\end{multline*}
with the convolutions defined as in \eqref{def:conv}.

We denote by $L^2_v$ the standard Lebesgue space $L^2$ in the $v$ variable, and similarly for $L^2_{x,v}$. Denote by $H^p_v$   the classical Sobolev space in $v$ variable. 

We recall the mixed Lebesgue spaces $L^p_k L^q_T L^r_v$ from \cite{MR4230064}:
\begin{equation*}
L^p_k L^q_T L^r_v = \left\{ g = g(t,x,v) : \| g \|_{L^p_k L^q_T L^r_v} < +\infty \right\},
\end{equation*}
where
\begin{equation*}
\| g \|_{L^p_k L^q_T L^r_v} := 
\begin{cases}
\left( \int_{\mathbb{Z}^3} \left( \int_0^T \| \hat{g}(t,k,\cdot) \|_{L^r_v}^q  dt \right)^{\frac{p}{q}}  d\Sigma(k) \right)^{\frac{1}{p}}, & q < \infty, \\
\left( \int_{\mathbb{Z}^3} \left( \sup_{0 < t < T} \| \hat{g}(t,k,\cdot) \|_{L^r_v} \right)^p  d\Sigma(k) \right)^{\frac{1}{p}}, & q = \infty,
\end{cases}
\end{equation*}
for $1 \leq p, r < \infty$ and $1 \leq q \leq \infty$. In particular,
\begin{equation*}
L^p_k L^r_v = \left\{ g = g(x,v) : \| g \|_{L^p_k L^r_v} = \left( \int_{\mathbb{Z}^3} \| \hat{g}(k,\cdot) \|_{L^r_v}^p  d\Sigma(k) \right)^{\frac{1}{p}} < +\infty \right\},
\end{equation*}
and
\begin{equation*}
L_k^1 = \left\{ g = g(x) : \| g \|_{L^1_k} = \int_{\mathbb{Z}^3} |\hat{g}(k)|  d\Sigma(k) < +\infty \right\}.
\end{equation*}
Finally, we recall the triple norm $\normm{\cdot}$ introduced in \cite{MR2863853}:
\begin{equation}\label{trinorm}
\begin{aligned}
\normm{f}^2 &:= \int_{\mathbb{R}^3} \int_{\mathbb{R}^3} \int_{\mathbb{S}^2} B(v-v_*,\sigma) \mu_* ( f - f' )^2  d\sigma  dv  dv_* \\
&\quad + \int_{\mathbb{R}^3} \int_{\mathbb{R}^3} \int_{\mathbb{S}^2} B(v-v_*,\sigma) f_*^2 \big( \sqrt{\mu'} - \sqrt{\mu} \big)^2  d\sigma  dv  dv_*.
\end{aligned}
\end{equation}
This triple norm is equivalent to the anisotropic norm $|\cdot|_{N^{\gamma,s}}$ introduced in \cite{MR2784329}. Both norms can be characterized explicitly via pseudo-differential operators (cf. \cite{MR3950012}).

Using the space $L_k^1 L_v^2$, we now define the Gevrey space considered in this work.

\begin{definition}\label{de}
Let $r > 0$. We denote by $\mathcal{G}^r$ the space of all $C^\infty$ functions $f(x,v)$ satisfying
\begin{equation}\label{degev}
\forall \alpha, \beta \in \mathbb{Z}^3_+, \quad \| \partial_x^\alpha \partial_v^\beta f \|_{L^1_k L^2_v} \leq C^{|\alpha| + |\beta| + 1} \left[ (|\alpha| + |\beta|)! \right]^r,
\end{equation}
for some constant $C > 0$ independent of $\alpha$ and $\beta$. The number $r$ is called the Gevrey index.

An equivalent characterization of \eqref{degev} is given by the Fourier multiplier:
\begin{equation*}
e^{c_* (-\Delta_x - \Delta_v)^{\frac{1}{2r}}} f \in L^1_k L^2_v
\end{equation*}
for some $c_* > 0$, where $e^{c_* (-\Delta_x - \Delta_v)^{\frac{1}{2r}}}$ is defined via
\begin{equation*}
\mathscr{F}_{x,v} \left( e^{c_* (-\Delta_x - \Delta_v)^{\frac{1}{2r}}} f \right)(k,\eta) = e^{c_* (|k|^2 + |\eta|^2)^{\frac{1}{2r}}} \mathscr{F}_{x,v} f(k,\eta),
\end{equation*}
with $\mathscr{F}_{x,v}$ being the full Fourier transform in $(x,v)$ and $(k,\eta)$ the dual variables.
\end{definition}

\subsection{Regularity and radius analysis for toy models}\label{subsec:toy}

Our main results are inspired by observations on simplified models of the Boltzmann equation. Consider the fractional Kolmogorov equation on $\mathbb{T}^3_x \times \mathbb{R}^3_v$:
\begin{equation*}\label{se1}
\begin{cases}
\partial_t g + v \cdot \partial_x g + (-\Delta_v)^s g = 0, & 0 < s < 1, \\
g|_{t=0} = g_0 \in L^2_{x,v}.
\end{cases}
\end{equation*}
Applying the Fourier transform, the solution is explicitly given by
\begin{equation}\label{se2}
(\mathscr{F}_{x,v} g)(t,k,\eta) = e^{-\int_0^t |\eta + \rho k|^{2s}  d\rho} (\mathscr{F}_{x,v} g_0)(k,\eta + t k).
\end{equation}
Moreover, as shown in \cite[Lemma 3.1]{MR2523694}, we have
\begin{equation}\label{se3}
- \left( t |\eta|^{2s} + t^{2s+1} |k|^{2s} \right) / c_s \leq -\int_0^t |\eta + \rho k|^{2s}  d\rho \leq -c_s \left( t |\eta|^{2s} + t^{2s+1} |k|^{2s} \right),
\end{equation}
for some constant $c_s > 0$ depending only on $s$.

Using \eqref{se2} and \eqref{se3}, we find a constant $C > 0$ depending only on $c_s$ such that for any $T > 0$,
\begin{align*}
& \| t^{\frac{2s+1}{2s} m} \partial_{x_j}^m g \|_{L^1_k L^\infty_T L^2_v} = \int_{\mathbb{Z}^3} \sup_{t \leq T} t^{\frac{2s+1}{2s} m} \| k_j^m \hat{g}(t,k,\cdot) \|_{L^2_v}  d\Sigma(k) \\
&\leq \int_{\mathbb{Z}^3} \sup_{t \leq T} \left( t^{2s+1} |k|^{2s} \right)^{\frac{m}{2s}} \| e^{-c_s (t |\eta|^{2s} + t^{2s+1} |k|^{2s})} \mathscr{F}_{x,v} g_0(k,\eta + t k) \|_{L^2_\eta}  d\Sigma(k) \\
&\leq C^m (m!)^{\frac{1}{2s}} \int_{\mathbb{Z}^3} \sup_{t \leq T} \| \mathscr{F}_{x,v} g_0(k,\eta + t k) \|_{L^2_\eta}  d\Sigma(k) \leq C^m (m!)^{\frac{1}{2s}} \| g_0 \|_{L_k^1 L_v^2},
\end{align*}
where we used the inequality
\begin{equation*}
\forall t \geq 0, \quad \left( t^{2s+1} |k|^{2s} \right)^{\frac{m}{2s}} e^{-c_s t^{2s+1} |k|^{2s}} \leq C^m (m!)^{\frac{1}{s}}.
\end{equation*}
Similarly,
\begin{equation*}
\| t^{\frac{1}{2s} m} \partial_{v_j}^m g \|_{L^1_k L^\infty_T L^2_v} \leq C^m (m!)^{\frac{1}{2s}} \| g_0 \|_{L_k^1 L_v^2},
\end{equation*}
with $C$ independent of $T$. Hence, there exists $C > 0$ depending only on $c_s$ such that for any $T > 0$,
\begin{equation}\label{geest}
\| t^{\frac{1+2s}{2s} |\alpha| + \frac{1}{2s} |\beta|} \partial_x^\alpha \partial_v^\beta g \|_{L^1_k L^\infty_T L^2_v} \leq C^{|\alpha| + |\beta|} \left[ (|\alpha| + |\beta|)! \right]^{\frac{1}{2s}} \| g_0 \|_{L_k^1 L_v^2}.
\end{equation}
By Definition \ref{de}, this implies
\begin{equation*}
\forall \ t > 0, \quad g(t,\cdot,\cdot) \in \mathcal{G}^{\frac{1}{2s}}(\mathbb{T}_x^3 \times \mathbb{R}_v^3),
\end{equation*}
with the spatial and velocity Gevrey radii bounded below by constant multiples of $t^{\frac{1+2s}{2s}}$ and $t^{\frac{1}{2s}}$, respectively.

For the Boltzmann equation with hard potentials, a more appropriate model is
\begin{equation}\label{fFP1}
\begin{cases}
\partial_t g + v \cdot \partial_x g + \comi v^\gamma (-\Delta_v)^s g = 0, & 0 < \gamma \leq 1, \ 0 < s < 1, \\
g|_{t=0} = g_0 \in L_{x,v}^2,
\end{cases}
\end{equation}
where $\comi v = (1 + |v|^2)^{\frac12}$. Since the coefficient $\comi v^\gamma$ is analytic but not ultra-analytic for $0 < \gamma \leq 1$, ultra-analyticity may not be achievable, and analyticity is likely the best possible regularity. This, together with \eqref{geest}, suggests that the optimal smoothing effect for \eqref{fFP1} occurs in the Gevrey space $\mathcal{G}^\tau$, where
\begin{equation*}\label{tau}
\tau := \max\left\{ 1, \frac{1}{2s} \right\}.
\end{equation*}
Moreover, the absence of large-time decay for $\comi v^\gamma$ prevents uniform-in-time estimates on the velocity analyticity radius. These observations lead to the following expected estimates for \eqref{fFP1}: for any $T > 0$, there exists a constant  $C(T) > 0$ depending on $T,$ such that 
\begin{equation}\label{localestimate}
\forall   \alpha, \beta \in \mathbb{Z}_+^3, \ \| t^{\frac{1+2s}{2s} |\alpha| + \frac{1}{2s} |\beta|} \partial_x^\alpha \partial_v^\beta g \|_{L^1_k L^\infty_T L^2_v} \leq C(T)^{|\alpha| + |\beta|} \left[ (|\alpha| + |\beta|)! \right]^\tau \| g_0 \|_{L_k^1 L_v^2}.
\end{equation}
And moreover, there exists a constant $C$ independent of $T$, such that 
\begin{equation}\label{shpest}
\forall\  \alpha \in \mathbb{Z}_+^3, \ \forall \  T > 0, \quad \| t^{\frac{1+2s}{2s} |\alpha|} \partial_x^\alpha g \|_{L^1_k L^\infty_T L^2_v} \leq C^{|\alpha|} (|\alpha|!)^\tau \| g_0 \|_{L_k^1 L_v^2}.
\end{equation}
These estimates, though inspired by \eqref{geest}, require  non-trivial proofs in the case $\gamma > 0$.

As shown below, we establish a local estimate for the Boltzmann equation \eqref{3} that is analogous to \eqref{localestimate}. Although the global estimate \eqref{shpest} remains open, we derive a weaker version of it; the details  are provided in the next subsection.

\subsection{Main results}

We begin by recalling recent well-posedness results for the Boltzmann equation. Suppose the initial datum $f_0(x,v)$ in \eqref{3} satisfies
\begin{equation}\label{smallass}
\| f_0 \|_{L^1_k L^2_v} \leq \epsilon
\end{equation}
for some $\epsilon > 0$, and
\begin{equation}\label{ini-law}
\int_{\mathbb{T}^3} \int_{\mathbb{R}^3} (1, v, |v|^2) \sqrt{\mu} f_0(x,v)  dv  dx = 0.
\end{equation}
By \cite[Theorem 2.1]{MR4230064}, if $\epsilon$ is sufficiently small, then the Boltzmann equation \eqref{3} admits a unique global mild solution $f \in L^1_k L^\infty_T L^2_v$ for any $T > 0$, satisfying \eqref{law} and that 
	\begin{equation} \label{v2}
		\begin{aligned}
	&\int_{\mathbb{Z}^3}   \sup_{ t\geq 0} \norm{ \hat  f (t,k)}_{L^2_v}
 	d\Sigma(k) +\int_{\mathbb{Z}^3}  \bigg( \int^{+\infty}_0    \normm{ \hat f(t,k)}^2dt  \bigg)^{\frac12}d\Sigma(k)   \leq C \epsilon
	\end{aligned}
	\end{equation}
	and  
		\begin{equation}\label{longtime}
	\forall \  t\geq 0,\quad 	\int_{\mathbb{Z}^3}   \norm{ \hat  f (t,k)}_{L^2_v}
 	d\Sigma(k)\leq C \epsilon e^{-A t},
	\end{equation}
for some constants $C, A > 0$.

Besides, by \cite[Proposition 3.7]{MR4356815}, if the constant  $\epsilon$ in \eqref{smallass} is sufficiently small, the mild solution also satisfies
\begin{equation}\label{v33}
    \int_{\mathbb{Z}^3}  \comi{k}^{\frac{s}{1+2s}} \bigg(\int^{1}_0    \norm{ \hat f(t,k)}^2_{L^2_v}dt  \bigg)^{\frac12}d\Sigma(k)   \leq C \epsilon.
\end{equation}
Recently, \cite{MR4930523} showed that this mild solution is analytic or sharp Gevrey regular for positive times: for any $T > 0$ and $\lambda > \frac{1+2s}{2s}$, there exists $C_{\lambda,T} > 0$ such that
\begin{equation}\label{sgv}
\int_{\mathbb{Z}^3} \sup_{t \leq T} t^{(\lambda+1)|\alpha| + \lambda |\beta|} \| \mathscr F_x (\partial_x^\alpha \partial_v^\beta f)(t,k) \|_{L^2_v}  d\Sigma(k) \leq  C_{\lambda,T}^{|\alpha| + |\beta|+1} \left[ (|\alpha| + |\beta|)! \right]^\tau,
\end{equation}
with $\tau = \max\{1, \frac{1}{2s}\}$. Note that $C_{\lambda,T}$ depends on $T$, so the radius estimate is local in time. Moreover, \eqref{sgv} is not optimal compared to \eqref{localestimate}.

In this work, we first improve the local estimate \eqref{sgv} to a sharp short-time estimate analogous to \eqref{localestimate}, and then derive a large-time estimate on the spatial regularity radius motivated by \eqref{shpest}.  Throughout, we assume the initial datum $f_0$ satisfies \eqref{smallass} and \eqref{ini-law}, and we consider the unique solution $f \in L^1_k L^\infty_T L^2_v$, constructed in \cite{MR4230064}, which satisfies \eqref{v2}, \eqref{longtime}, \eqref{v33} and \eqref{sgv}. The main results can be stated as follows. 

\begin{theorem}[Local estimate] \label{local-gxt}

Assume that the cross-section satisfies hypotheses \eqref{kern} and \eqref{angu} with parameters $0 \leq \gamma \leq 1$ and $0 < s < 1$, and that the initial datum $f_0$ satisfy \eqref{smallass} and \eqref{ini-law}. Let $f \in L^1_k L^\infty_T L^2_v$    be a solution to the Boltzmann equation \eqref{3} for  any $T > 0$, which satisfies \eqref{law} and the estimates \eqref{v2}, \eqref{longtime}, \eqref{v33} and \eqref{sgv}. If the constant $\epsilon$ in \eqref{smallass} is sufficiently small, then   for all $\alpha, \beta \in \mathbb{Z}_+^3,$ it holds that
\begin{equation}\label{gx++}
\begin{aligned}
&\int_{\mathbb{Z}^3} \sup_{t \leq 1} t^{\frac{1+2s}{2s} |\alpha| + \frac{1}{2s} |\beta|} \| \mathscr F_x (\partial_x^\alpha \partial_v^\beta f)(t,k) \|_{L_v^2}  d\Sigma(k) \\
&\qquad + \int_{\mathbb{Z}^3} \left( \int_0^1 t^{\frac{1+2s}{s} |\alpha| + \frac{1}{s} |\beta|} \normm{ \mathscr F_x (\partial_x^\alpha \partial_v^\beta f)(t,k) }^2  dt \right)^{\frac12}  d\Sigma(k) \\
&\leq  C^{|\alpha| + |\beta|+1} \left[ (|\alpha| + |\beta|)! \right]^\tau,
\end{aligned}
\end{equation}
where $\tau = \max\left\{1, \frac{1}{2s}\right\}$ and the constant $C > 0$ depends only on the parameters $s$ and  $\gamma$.
\end{theorem}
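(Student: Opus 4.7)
The plan is to prove \eqref{gx++} by a joint induction on $N = |\alpha|+|\beta|$, after a partial Fourier transform in $x$ and the introduction of the weighted derivatives
\[
\hat f_{\alpha,\beta}(t,k,v) := W_{\alpha,\beta}(t)\,(ik)^\alpha\partial_v^\beta \hat f(t,k,v),\qquad W_{\alpha,\beta}(t) := t^{\frac{1+2s}{2s}|\alpha|+\frac{1}{2s}|\beta|}.
\]
Each $\hat f_{\alpha,\beta}$ satisfies an equation of the schematic form
\[
\partial_t \hat f_{\alpha,\beta} + ik\cdot v\,\hat f_{\alpha,\beta} - \mathcal L\,\hat f_{\alpha,\beta} = \tfrac{\partial_t W_{\alpha,\beta}}{W_{\alpha,\beta}}\,\hat f_{\alpha,\beta} + R^{\mathrm{tr}}_{\alpha,\beta} + R^{\mathrm{col}}_{\alpha,\beta} + R^{\mathrm{nl}}_{\alpha,\beta},
\]
where $R^{\mathrm{tr}}_{\alpha,\beta} = -W_{\alpha,\beta}[\partial_v^\beta, v\cdot\partial_x](ik)^\alpha\hat f$ is the transport commutator, $R^{\mathrm{col}}_{\alpha,\beta} = W_{\alpha,\beta}[\partial_v^\beta,\mathcal L](ik)^\alpha\hat f$ the collisional commutator, and $R^{\mathrm{nl}}_{\alpha,\beta}$ the weighted Leibniz expansion of $\hat\Gamma(\hat f,\hat f)$. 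The $L^2_v$ energy identity for each $k$, combined with the coercivity $-\langle\mathcal L h,h\rangle_{L^2_v}\gtrsim \normm{h}^2 - \|\mathbf P h\|_{L^2_v}^2$ (with $\mathbf P$ the projector onto the five-dimensional hydrodynamic kernel controlled via the macro--micro decomposition and the a priori bounds \eqref{v2}, \eqref{longtime}, \eqref{v33}) then reduces the inductive step to bounding, in the $L^1_k$ norm, each of the four source terms by $[C^{N+1}(N!)^\tau]^2$; taking square roots produces \eqref{gx++}.

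The critical step, and the origin of the sharp exponents $\frac{1+2s}{2s}$ and $\frac{1}{2s}$, is the treatment of $R^{\mathrm{tr}}_{\alpha,\beta}$. Since $[\partial_v^\beta, v\cdot\partial_x]\phi = \sum_j \beta_j \partial_{x_j}\partial_v^{\beta-e_j}\phi$ and
\[
\tfrac{1+2s}{2s}(|\alpha|+1)+\tfrac{1}{2s}(|\beta|-1) = \tfrac{1+2s}{2s}|\alpha|+\tfrac{1}{2s}|\beta|+1,
\]
a direct computation yields $R^{\mathrm{tr}}_{\alpha,\beta} = -t^{-1}\sum_j \beta_j\,\hat f_{\alpha+e_j,\beta-e_j}$. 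This term carries the same total order $N$, so it is not directly covered by the inductive hypothesis, and the factor $t^{-1}$ is singular at the origin. I expect this to be the main obstacle. The remedy is to regard all multi-indices with $|\alpha|+|\beta|=N$ as a coupled system and to exploit the hypoelliptic gain coming from the triple-norm dissipation: the spectral-gap coercivity of $\mathcal L$ together with the transfer of $\partial_v$-dissipation into $\partial_x$-regularity, in the spirit of the fractional-regularity estimate \eqref{v33} of \cite{MR4356815}, permits absorbing the time-integrated contribution of $R^{\mathrm{tr}}$ into the dissipation norm on the left, with a coupling constant that is tamed by the small factor $t\leq 1$ inherent in the identity above. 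The same dissipation will also absorb the potentially singular term $\tfrac{\partial_t W_{\alpha,\beta}}{W_{\alpha,\beta}}\hat f_{\alpha,\beta} = \tfrac{C_{\alpha,\beta}}{t}\hat f_{\alpha,\beta}$ via a Hardy-type trick, exploiting that $\hat f_{\alpha,\beta}$ vanishes at $t=0$.

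The collisional commutator $R^{\mathrm{col}}_{\alpha,\beta}$ would be controlled using the $v$-derivative commutator estimates for the non-cutoff linearized operator established in \cite{MR4612704, MR4930523}, which write $[\partial_v^\beta,\mathcal L]h$ as a sum of strictly lower-order $\mathcal L$-type operators with polynomial $\comi v^\gamma$ weights absorbed by the triple norm. For $R^{\mathrm{nl}}_{\alpha,\beta}$, I would use the splitting $W_{\alpha,\beta} = W_{\alpha',\beta'}\,W_{\alpha-\alpha',\beta-\beta'}$ (valid since $W$ is a monomial in $t$) to distribute the weight, yielding
\[
R^{\mathrm{nl}}_{\alpha,\beta} = \sum_{\alpha'\leq\alpha,\,\beta'\leq\beta}\binom{\alpha}{\alpha'}\binom{\beta}{\beta'}\,\hat\Gamma\bigl(\hat f_{\alpha',\beta'},\,\hat f_{\alpha-\alpha',\beta-\beta'}\bigr),
\]
and then applying the standard trilinear estimate for $\hat\Gamma$ in the mixed norm $L^1_k L^\infty_T L^2_v \times L^1_k L^2_T(\normm{\cdot})$. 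The endpoint terms $(\alpha',\beta')\in\{(0,0),(\alpha,\beta)\}$ carry the undifferentiated $\hat f$ on one factor and are absorbed by the smallness assumption \eqref{smallass}; the remaining contributions have strictly smaller order on both factors and, after invoking the combinatorial identity $\sum_{j=0}^N \binom{N}{j}[j!(N-j)!]^\tau \leq C^N (N!)^\tau$, yield the required bound $C^{N+1}(N!)^\tau$ and close the induction.
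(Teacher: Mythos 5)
Your setup is sound and you have correctly isolated the crux: after the exact cancellation $R^{\mathrm{tr}}_{\alpha,\beta}=-t^{-1}\sum_j\beta_j\hat f_{\alpha+e_j,\beta-e_j}$, the energy identity contains two $t^{-1}$-singular terms of the \emph{same} total order $N$ (the transport commutator and $\tfrac{\partial_tW}{W}\hat f_{\alpha,\beta}$), and everything hinges on absorbing them. But precisely at this point the proposal stops being a proof. The "Hardy-type trick exploiting that $\hat f_{\alpha,\beta}$ vanishes at $t=0$" does not work as stated: a Hardy inequality in $t$ requires control of $\partial_t\hat f_{\alpha,\beta}$, which is not available, and a Gronwall argument based on $\tfrac{C_{\alpha,\beta}}{t}\|\hat f_{\alpha,\beta}\|^2$ is circular (the rate of vanishing at $t=0$ is exactly what is being proved). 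The mechanism that actually closes this step is the interpolation $\|g\|_{L^2_v}^2\leq\tilde\eps\|g\|_{H^s_v}^2+\tilde\eps^{-\frac{1-s}{s}}\|g\|_{H^{s-1}_v}^2$ with $\tilde\eps\sim\eps N^{-1}t$: lowering one $v$-derivative costs exactly the factor $t^{-1/(2s)}$ built into the weight, the $\eps$-part is absorbed by the triple-norm dissipation via \eqref{+lowoftri}, and the resulting $N^{1/s}$ loss is what produces the Gevrey index $\tau=\max\{1,\tfrac{1}{2s}\}$; none of this bookkeeping appears in your sketch. Moreover, when $\beta=0$ (or after the commutator has converted the last $v$-derivative into an $x$-derivative) there is no $v$-derivative left to lower, and one must instead pay with the subelliptic gain $\langle k\rangle^{\frac{s}{1+2s}}$ of Proposition \ref{sub:ell}; this forces the quantity $\int_{\mathbb Z^3}\langle k\rangle^{\frac{s}{1+2s}}(\int_0^1\|\cdot\|_{L^2_v}^2dt)^{1/2}d\Sigma(k)$ to be carried inside the induction hypothesis (as in \eqref{tve}), which your inductive quantity omits. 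Saying the coupling is "tamed by the small factor $t\leq1$" does not substitute for this power counting — the exponents $\tfrac{1+2s}{2s}$ and $\tfrac{1}{2s}$ make the cancellations exact, not small.

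For comparison, the paper sidesteps the coupled system over all $(\alpha,\beta)$ with $|\alpha|+|\beta|=N$ entirely: it runs the induction on powers of the two vector fields $P_1=\tfrac{2s}{1+2s}t^{\frac{1+2s}{2s}}\partial_{x_1}+t^{\frac{1}{2s}}\partial_{v_1}$ and $P_2=t^{\frac{1}{2s}}\partial_{v_1}$, whose commutators with $\partial_t+v\cdot\partial_x$ always produce $t^{-1}$ times expressions containing a factor of $P_2$ or $P_1-P_2$, so that the interpolation above (split into the cases $0<s\leq\tfrac12$ and $\tfrac12<s<1$) together with \eqref{fmn} and \eqref{ksub} always lands on terms covered by the induction hypothesis; mixed derivatives are then recovered a posteriori via $\|\partial_x^\alpha\partial_v^\beta f\|_{L^2_v}\leq\|\partial_x^{2\alpha}f\|_{L^2_v}^{1/2}\|\partial_v^{2\beta}f\|_{L^2_v}^{1/2}$, which avoids estimating $\hat f_{\alpha,\beta}$ for mixed $(\alpha,\beta)$ altogether. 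Your treatment of the collisional commutator and of the nonlinear Leibniz expansion (weight splitting, trilinear estimate \eqref{upptrifour}, estimate \eqref{MF}, and the combinatorial sum) matches the paper and is fine; the gap is confined to, but concentrated at, the decisive singular-commutator step.
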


\begin{theorem}[Global estimate] \label{gxt}
Under the hypotheses of Theorem \ref{local-gxt}, there exists a constant $C > 0$ depending on $s$ and $\gamma$ such that, for all $\alpha \in \mathbb{Z}_+^3$,
\begin{multline}\label{opg}
\int_{\mathbb{Z}^3} \sup_{t \geq 1} t^{\frac{1+2s}{2s} |\alpha|} \| \widehat{\partial_x^\alpha f}(t,k) \|_{L^2_v}  d\Sigma(k) \\
+ \int_{\mathbb{Z}^3} \left( \int_1^{+\infty} t^{\frac{1+2s}{s} |\alpha|} \normm{ \widehat{\partial_x^\alpha f}(t,k) }^2  dt \right)^{\frac12}  d\Sigma(k) \leq C^{|\alpha| + 1} (|\alpha|!)^{\frac{1+2s}{2s}},
\end{multline}
and
\begin{multline}\label{opa}
\int_{\mathbb{Z}^3} \sup_{t \geq 1} t^{|\alpha|} \| \widehat{\partial_x^\alpha f}(t,k) \|_{L^2_v}  d\Sigma(k) \\
+ \int_{\mathbb{Z}^3} \left( \int_1^{+\infty} t^{2|\alpha|} \normm{ \widehat{\partial_x^\alpha f}(t,k) }^2  dt \right)^{\frac12}  d\Sigma(k) \leq C^{|\alpha| + 1} (|\alpha|!)^\tau,
\end{multline}
where $\tau = \max\left\{1, \frac{1}{2s}\right\}$.
\end{theorem}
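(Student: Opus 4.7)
The plan is to propagate the Gevrey/analytic bound provided at $t=1$ by Theorem \ref{local-gxt} to all times $t \geq 1$ through a weighted energy argument on the Fourier side in $x$. Set $c = \frac{1+2s}{2s}$ for \eqref{opg} and $c=1$ for \eqref{opa}; these exponents match the scalings identified in the toy models of Section \ref{subsec:toy}. Taking $|\beta|=0$ in Theorem \ref{local-gxt} supplies
\[
\int_{\mathbb{Z}^3}\|\widehat{\partial_x^\alpha f}(1,k)\|_{L^2_v}\,d\Sigma(k) + \int_{\mathbb{Z}^3}\Big(\int_0^1 \normm{\widehat{\partial_x^\alpha f}(t,k)}^2\,dt\Big)^{\!1/2}\,d\Sigma(k) \le C^{|\alpha|+1}(|\alpha|!)^\tau,
\]
which will serve as the ``initial data'' at $t=1$ for propagating the estimate on $[1,+\infty)$.

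\textbf{Weighted energy and macro-micro decomposition.} Applying $\partial_x^\alpha$ to \eqref{3}, taking the partial Fourier transform in $x$, pairing with $t^{2c|\alpha|}\widehat{\partial_x^\alpha f}$ in $L^2_v$ (the transport term vanishing by skew-symmetry), and using the coercivity $-(\mathcal L g,g)_{L^2_v}\gtrsim\normm{(I-P)g}^2$, where $P$ is the $L^2_v$-projection onto $\mathrm{Ker}\,\mathcal L$, one obtains
\[
\frac{d}{dt}\bigl[t^{2c|\alpha|}\|\widehat{\partial_x^\alpha f}\|_{L^2_v}^2\bigr] + 2t^{2c|\alpha|}\normm{\widehat{\partial_x^\alpha f}}^2 \le 2c|\alpha|\,t^{2c|\alpha|-1}\|\widehat{\partial_x^\alpha f}\|_{L^2_v}^2 + 2t^{2c|\alpha|}\bigl|(\widehat{\partial_x^\alpha\Gamma(f,f)},\widehat{\partial_x^\alpha f})_{L^2_v}\bigr|.
\]
The polynomial source $c|\alpha|\,t^{2c|\alpha|-1}\|\cdot\|_{L^2_v}^2$ is absorbed by splitting $\widehat{\partial_x^\alpha f}=P\widehat{\partial_x^\alpha f}+(I-P)\widehat{\partial_x^\alpha f}$. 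The microscopic part is dominated by $\normm{(I-P)\cdot}^2$ thanks to hard-potential coercivity; the macroscopic part $P\widehat{\partial_x^\alpha f}$, on which $\mathcal L$ vanishes, is controlled by combining the hypoelliptic gain underlying \eqref{v33} (which converts dissipation into spatial regularity $\comi{k}^{s/(1+2s)}$) with the spectral-gap decay \eqref{longtime}, applied to the moment system satisfied by $\partial_x^\alpha f$.

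\textbf{Nonlinear recursion and induction.} Leibniz expansion gives $\widehat{\partial_x^\alpha\Gamma(f,f)} = \sum_{\alpha_1+\alpha_2=\alpha}\binom{\alpha}{\alpha_1}\hat\Gamma\bigl(\widehat{\partial_x^{\alpha_1}f},\widehat{\partial_x^{\alpha_2}f}\bigr)$ with convolution in $k$ as in \eqref{def:conv}. The standard non-cutoff trilinear estimate $|(\hat\Gamma(\hat g,\hat h),\hat w)_{L^2_v}|\lesssim \normm{\hat h}\normm{\hat w}\|\hat g\|_{L^2_v}+\normm{\hat g}\normm{\hat w}\|\hat h\|_{L^2_v}$, combined with the weighted inequality above, yields after integration on $[1,T]$ and summation in $L^1_k$ a closed recursion
\[
X_\alpha(T) \le C^{|\alpha|+1}(|\alpha|!)^\tau + C\!\sum_{\substack{\alpha_1+\alpha_2=\alpha\\|\alpha_j|<|\alpha|}}\binom{\alpha}{\alpha_1}X_{\alpha_1}(T)X_{\alpha_2}(T),
\]
where $X_\alpha(T)$ is the quantity on the left-hand side of \eqref{opg} restricted to $t\in[1,T]$. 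Induction on $|\alpha|$, using a standard Gevrey combinatorial inequality (inserting an auxiliary factor $(|\alpha|+1)^{-2}$ in the ansatz) and the base case $|\alpha|=0$ supplied by \eqref{v2}--\eqref{longtime}, closes the bound $X_\alpha(T)\le C^{|\alpha|+1}(|\alpha|!)^c$ uniformly in $T$; passing $T\to+\infty$ yields \eqref{opg}. Estimate \eqref{opa} follows by the same scheme with the milder weight $t^{|\alpha|}$ and no hypoelliptic trading, so the sharper factorial $(|\alpha|!)^\tau$ is preserved.

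\textbf{Main obstacle.} The delicate point is closing the recursion while simultaneously preserving both the sharp time weight and the sharp factorial. This hinges on the hydrodynamic part $P\widehat{\partial_x^\alpha f}$: since $\mathcal L$ vanishes on $\mathrm{Ker}\,\mathcal L$, the desired decay must be extracted from the spectral gap via the moment equations, combined with the hypoelliptic gain of \eqref{v33}, without introducing spurious factorial losses in $|\alpha|$. Balancing the polynomial source $c|\alpha|t^{2c|\alpha|-1}\|\cdot\|^2$ against the dissipation in both the regime $t\ll|\alpha|$ (where the weight is small) and $t\gg|\alpha|$ (where the weight is dominant) is where the macro-micro decomposition and the hypoelliptic estimate become indispensable.
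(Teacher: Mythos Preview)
Your outline for \eqref{opg} is broadly aligned with the paper: work on the Fourier side, multiply by the weight $t^{\frac{1+2s}{s}m}\comi{k}^{2m}$, use macro--micro dissipation, and close by induction. The crucial mechanism you allude to but do not write out is how the source term $c|\alpha|\,t^{2c|\alpha|-1}\comi{k}^{2|\alpha|}\|\hat f\|_{L^2_v}^2$ is absorbed: the paper uses the interpolation
\[
m\,t^{\frac{1+2s}{s}m-1}\comi{k}^{2m}\le \eps\,t^{\frac{1+2s}{s}m}\comi{k}^{2m+\frac{2s}{1+2s}}+\eps^{-\frac{1+s}{s}}m^{\frac{1+2s}{s}}\,t^{\frac{1+2s}{s}(m-1)}\comi{k}^{2(m-1)+\frac{2s}{1+2s}},
\]
so the first piece is eaten by the \emph{hypoelliptic} gain $\comi{k}^{\frac{s}{1+2s}}$ (Corollary~\ref{cor:sub}) and the second by the inductive hypothesis. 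It is exactly the factor $m^{\frac{1+2s}{2s}}$ appearing here (after the square root) that forces the Gevrey index $\frac{1+2s}{2s}$; your displayed recursion, coming only from the nonlinear Leibniz expansion, does not contain this term and would not by itself produce $(|\alpha|!)^{\frac{1+2s}{2s}}$.

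For \eqref{opa} there is a genuine gap. You assert that ``the same scheme with the milder weight $t^{|\alpha|}$ and no hypoelliptic trading'' preserves the sharper factorial $(|\alpha|!)^\tau$. This is precisely where the direct approach fails. With $c=1$ the source term is $|\alpha|\,t^{2|\alpha|-1}\comi{k}^{2|\alpha|}\|\hat f\|_{L^2_v}^2$, and without trading a fractional power of $\comi{k}$ you can only interpolate in $t$, picking up $|\alpha|^2$ against the level $|\alpha|-1$; iterated, this yields $(|\alpha|!)^2$, strictly worse than $(|\alpha|!)^\tau$ for every $s\in(0,1)$. The paper avoids this entirely by abandoning $\partial_x^\alpha$ and working instead with the vector field $H=t\partial_{x_1}+\partial_{v_1}$, which satisfies $[H^m,\partial_t+v\cdot\partial_x]=0$. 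The energy identity for $H^m f$ therefore has \emph{no} polynomial source term from the weight, and one obtains $\|\widehat{H^m f}\|$ bounds with factorial $(m!)^\tau$ directly (Proposition~\ref{theorem}); a companion estimate for $\partial_{v_1}^m f$ (Proposition~\ref{prp:v}) then recovers $t^m\partial_{x_1}^m=\big(H-\partial_{v_1}\big)^m$ via Lemma~\ref{lem:fm}. The macroscopic control you need at this stage is the higher-order moment estimate of Proposition~\ref{p3} for $H^m f$, not the level-$0$ one. In short, your sketch would reprove \eqref{opg} but cannot reach \eqref{opa}; the commuting-vector-field step is the missing idea.
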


\begin{remark}
The Landau equation, obtained as the grazing limit of the Boltzmann equation, admits similar results with $s$ replaced by $1$.
\end{remark}

\subsection{Related literature}

There is extensive literature on the well-posedness of the Boltzmann equation (see, for example, \cite{MR2863853, MR2793203, MR3177640, MR2847536, MR4526062, MR4230064, MR2784329, MR2095473, MR2972454, MR2679369, MR3213301} and references therein). Here we only mention the recent    works \cite{MR4230064, MR4526062} on close-to-equilibrium problems with exponential or polynomial tails, and \cite{MR4895262} on large-data solutions.

We are interested in the regularizing effect  for kinetic equations with singular collision kernels. It is well-established that angular singularity induces velocity diffusion, as shown in \cite{MR1765272}, where the Boltzmann operator with angular singularity is shown to behave locally like the fractional Laplacian $(-\Delta_v)^s$. Spatial diffusion arises from the interaction between the collision and transport operators (cf. \cite{MR2679369, MR3950012}). Thus, the Boltzmann equation is expected to exhibit regularization similar to the heat equation.

The mathematical study of this regularization goes back to  L. Desvillettes \cite{MR1324404} for a one-dimensional model. The intrinsic velocity diffusion was later established by Alexandre-Desvillettes-Villani-Wennberg \cite{MR1765272}. Since then, substantial progress has been made; we refer here to \cite{MR2506070, MR2679369, MR2847536, MR2807092, MR2784329, MR4107942} for results on 
$C^\infty$ or Sobolev regularization. Smoothing in more regular Gevrey spaces was established in \cite{MR3348825, MR4147430, MR4356815, MR4375857}, based on the hypoelliptic structure explored in \cite{MR1949176, MR3950012, MR3456819, MR3102561, MR2763329, MR2467026, MR3193940,MR2885564}. For general large-data settings, conditional regularity has been extensively investigated in \cite{MR4033752, MR4433077, MR4229202, MR4049224, MR3551261, MR4431674} and references therein, using techniques from the De Giorgi–Nash–Moser theory and the averaging lemma.

\subsection{Methodology}

Our approach is inspired by \cite{MR4930523}, which uses directional derivatives $H_{\delta_j}$ defined by
\begin{equation*}
H_{\delta_j} = \frac{1}{\delta_j + 1} t^{\delta_j + 1} \partial_{x_1} + t^{\delta_j} \partial_{v_1}, \quad j = 1,2.
\end{equation*}
Compared with the classical derivatives $\partial_{v},$ the main advantage of $ H_{\delta_j}$ is that   the spatial derivatives are not involved in the commutator  between $H_{\delta_j}$ and the transport operator, that is,  
  \begin{equation*}\label{keyob}
  	[H_{\delta_j}, \,\,  \partial_t+v\,\cdot\,\partial_x ]=-\delta_j t^{\delta_j-1}\partial_{v_1},
  \end{equation*}
 and  more generally, 
 \begin{equation}\label{chk}
\forall\ m\geq 1,\quad 	[H_{\delta_j}^m, \,\,  \partial_t+v\,\cdot\,\partial_x ]=-\delta_j m t^{\delta_j-1} \partial_{v_1} H_{\delta_j}^{m-1}.
\end{equation}
 Moreover, the classical derivatives can be generated by the linear combination of $H_{\delta_1}$ and $H_{\delta_2}$. 
This  enables to control the classical derivatives in terms of the directional derivatives in $H_{\delta_1} $ and $H_{\delta_2} $.  Note in \cite{MR4930523} the restriction that    $\delta_j> \frac{1+2s}{2s}, j=1,2,$ is required to estimate  the term $\norm{\widehat{H_{\delta_j}^m f}(t,k)}_{L^2_{v}}$ near $t=0.$ 

In this work, we instead use the following two operators:
\begin{equation}\label{p12}
P_1 = \frac{2s}{1+2s} t^{\frac{1+2s}{2s}} \partial_{x_1} + t^{\frac{1}{2s}} \partial_{v_1} \  \text{and} \ P_2= t^{\frac{1}{2s}} \partial_{v_1}.
\end{equation}
Although
\begin{equation*}
[\partial_{v_1}^m, v \cdot \partial_x] = m \partial_{x_1} \partial_{v_1}^{m-1}
\end{equation*}
involves the spatial derivatives, the inner product
\begin{equation*}
\left( \mathscr F_x(m \partial_{x_1} \partial_{v_1}^{m-1} f), \mathscr F_x(\partial_{v_1}^m f) \right)_{L_v^2}
\end{equation*}
can be controlled   via the relation
\begin{equation*}
\partial_{x_1} = \frac{1+2s}{2s} t^{-\frac{1+2s}{2s}} P_1 - \frac{1+2s}{2s}t^{-1} \partial_{v_1}.
\end{equation*}
This allows us to 
ignore the transport part $v\cdot\partial_x$ at moment and first estimate the $v$-derivatives by treating the equation as a fractional heat equation:
\begin{equation}\label{fh}
\partial_t g + (-\Delta_v)^s g = 0, \quad g|_{t=0} = g_0.
\end{equation}
The solution $g = e^{-t(-\Delta_v)^s} g_0$ to \eqref{fh} satisfies
\begin{equation*}
\| (t^{\frac{1}{2s}} \partial_{v_j})^m g \|_{L_v^2} \leq C^m (m!)^{\frac{1}{2s}} \| g_0 \|_{L_v^2} ,
\end{equation*}
which motivates the choice of operator $t^{\frac{1}{2s}}\partial_{v_1}$ in \eqref{p12}. Meanwhile, the operator 
$P_1$ in \eqref{p12} is constructed following a similar inspiration from \eqref{chk} for 
 $H_{\delta_j},$ which satisfies 
\begin{equation}\label{pmc}
\forall\ m \geq 1, \quad [P_1^m, \partial_t + v \cdot \partial_x] = -\frac{1}{2s} m t^{\frac{1-2s}{2s}} \partial_{v_1} P_1^{m-1}.
\end{equation}
The two operators in \eqref{p12} work well for deriving local-in-time estimates for the Boltzmann equation \eqref{3}.

For global estimates, we replace $t^{\frac{1}{2s}} \partial_{v_1}$ with $\partial_{v_1}$ to overcome the lack of large-time decay in $\mu$, and accordingly replace $P_1$ with
\begin{equation*}
H = t \partial_{x_1} + \partial_{v_1}.
\end{equation*}
Estimate \eqref{opa} will then follow  from quantitative bounds involving $H$ and $\partial_{v_1}$. To prove \eqref{opg} with an improved radius, we use hypoelliptic estimates from \cite{MR3950012}, even though the resulting regularity $\mathcal{G}^{\frac{1+2s}{2s}}$ is not optimal. Further details are provided in Section \ref{sec:global}.

\subsection{Organization of the paper}

The rest of the paper is organized as follows. In Section \ref{sec:prelim}, we list  some  frequently used estimates.   Section \ref{sec:local} is devoted to the proof of Theorem \ref{local-gxt}, while Theorems \ref{gxt} is proven in Sections \ref{sec:macros} and \ref{sec:global}.

\section{Preliminaries}\label{sec:prelim}

  This section collects several preliminary estimates to be used in the sequel. Let  $\mathcal L$ be the linearized Boltzmann operator in \eqref{colli}.  The null space  of the operator $\mathcal{L}$ is given by
	  \begin{equation*}
	  \mathcal{N}=Ker \mathcal{L}=\text{span}\big\{\sqrt{\mu}, v_1\sqrt{\mu}, v_2\sqrt{\mu},v_3\sqrt{\mu},|v|^2\sqrt{\mu} \big\},
	  \end{equation*}
	  where $\mu$ is the global Maxwellian defined in \eqref{gmu}. 
	  We further define   the  orthogonal projection  $\mathbf{P}$  from $L^2(\mathbb R^3_v)$
	  onto $\mathcal{N}$:
	  \begin{eqnarray}\label{eqorp1}
	  \mathbf{P}f =\big \{a(t,x)+b(t,x)\cdot v+c(t,x)(|v|^2-3)\big \}\mu^{\frac12}, 
	  \end{eqnarray}
	  with coefficients
	  \begin{equation*}
	  	 a=\int_{\mathbb R^3}\mu^{\frac12}f dv,\quad 
	  b=\int_{\mathbb R^3}v\mu^{\frac12}f dv \ \textrm{ and }\ 
	  c =\frac{1}{6}\int_{\mathbb R^3}(|v|^2-3)\mu^{\frac12}f dv.
	  \end{equation*}

\subsection{Coercivity property  and trilinear estimate}\label{subsec:coer}
Let $\normm{\cdot}$ be the triple norm defined by \eqref{trinorm}. Then by the coercivity of $\mathcal L$ and    identification of the triple norm   (cf.\cite[Propositions 2.1 and 2.2]{MR2863853} for instance),  we have \begin{equation}\label{rela}
\forall\ h\in \mathcal S(\mathbb R_v^3),\quad  C_1\normm {\{\mathbf{I}-\mathbf{P}\}h}^2 \leq -\inner{\mathcal L h, h}_{L^2 _v},
  \end{equation}
  and  for Maxwellian molecules  and hard potential cases (i.e.$\gamma\geq 0$),
 \begin{equation}
	\label{+lowoftri}
\forall\ h\in \mathcal S(\mathbb R_v^3),\quad 	C_1 \norm{h}_{H^s_{v}} \leq \normm{h},
\end{equation}
where $s$ is the parameter in \eqref{angu}, and $C_1>0$ is a   constant, and $\mathcal S(\mathbb R_v^3)$   denotes the Schwartz space in  $v$. These estimates remain valid for any $f$ such that $\normm f<+\infty.$  As an immediate consequence of \eqref{rela}, one has 
\begin{equation}\label{+rela}
\forall\ h\in \mathcal S(\mathbb R_v^3),\quad  C_1\normm {h}^2 \leq -\inner{\mathcal L h, h}_{L^2 _v}+\norm{h}_{L_v^2}^2.
  \end{equation}

 For simplicity of notations,   we will use $C_0$ to denote a generic positive constant that may vary from line to line, possibly enlarged when necessary.
We now recall  a trilinear estimate for the collision operator (see \cite[Theorem 2.1]{MR2784329}): for any $f,g,h\in\mathcal S (\mathbb R_v^3),$
\begin{equation}\label{trin}
\big|\big(  \mathcal T ( f, g, \mu^{\frac12}),  \  h\big)_{L^2_v}\big|=\big|\big(  \Gamma ( f, g),  \  h\big)_{L^2_v}\big|\leq C_0\norm{f}_{L_v^2}\cdot \normm{ g} \cdot \normm{ h},\end{equation}
recalling $\mathcal T$ is defined in \eqref{matht}.

We will frequently use the Fourier-transformed version of this estimate.
 Taking the partial Fourier transform in 
$x$ and applying    \cite[Lemma 3.2]{MR4230064},  we obtain that, for any $k\in\mathbb Z^3$ and for   any  $f,g,h\in L_k^1(\mathcal S(\mathbb R_v^3))$, 
 \begin{multline}\label{upptrifour}
  \big|\big( \hat{\mathcal T} ( \hat f, \hat g, \mu^{\frac12}),  \  \hat h\big)_{L^2_v}\big| =	\big|\big( \hat \Gamma{(\hat f(k), \hat g(k))}, \hat h(k)\big)_{L^2_v}\big |\\
  \leq C_0 \normm{\hat h(k)} \int_{\mathbb Z^3 } \norm{\hat f(k-\ell)}_{L^2_v}   \normm{\hat g(\ell)}    d\Sigma(\ell).
  \end{multline}
  More generally,  consider a function   $\omega=\omega(v)$ satisfying the pointwise bound
  \begin{equation}\label{contild}
  \forall\ v\in\mathbb R^3, \quad |  \omega (v) | \leq
\tilde C  \mu(v)^{\frac14} 
\end{equation}
for some constant $\tilde C>0.$  Then, by the same argument used to prove \eqref{trin} with $\mu^{\frac12}$ therein replaced by $\omega,$ we obtain
\begin{equation*}
	 \forall\, f,g,h\in\mathcal S (\mathbb R_v^3), \quad
\big|\big(  \mathcal T ( f, g,   \omega),  \  h\big)_{L^2_v}\big|\leq C_0 \tilde C \norm{f}_{L_v^2}\cdot \normm{ g} \cdot \normm{ h}.
\end{equation*}
As a result, similar to \eqref{upptrifour}, we   perform  the partial Fourier transform in $x$ variable to conclude
\begin{equation}\label{ketres}
 		\big|\big( \hat{\mathcal{T}}(\hat{f}, \hat g,  \omega), \hat h(k)\big)_{L^2_v}\big |\leq C_0 \tilde C \normm{\hat h(k)} \int_{\mathbb Z^3 } \norm{\hat f(k-\ell)}_{L^2_v}   \normm{\hat g(\ell)}    d\Sigma(\ell)
 \end{equation}
 with $\tilde C$   the constant in \eqref{contild}.   In particular, if  the function $g$ in \eqref{ketres}  depends only on  $v$ variable, then \eqref{ketres} reduces to 
\begin{equation*}
		\big|\big( \hat{\mathcal{T}}(\hat{f},   g,  \omega), \hat h(k)\big)_{L^2_v}\big |\leq C_0 \tilde C \norm{\hat f(k)}_{L^2_v}   \normm{  g } \times   \normm{\hat h(k)}. 	\end{equation*}
Combined with the inequality (cf. \cite[Proposition 2.2]{MR2863853})
\begin{equation*}
	\normm{ g} \leq \tilde c \norm{ (1+|v|^{2+\gamma}-\Delta_v)  g}_{L_v^2 }
\end{equation*} 
for some constant $\tilde c>0$, we obtain (after possibly enlarging 
 $C_0$) 
\begin{equation*}
	\big|\big( \hat{\mathcal{T}}(\hat{f},   g,  \omega), \hat h(k)\big)_{L^2_v}\big |\leq C_0 \tilde C \norm{\hat f(k)}_{L^2_v}  \norm{ (1+|v|^{2+\gamma}-\Delta_v)  g}_{L_v^2 }\normm{\hat h(k)}. 
\end{equation*} 
 Now, suppose 
$g=g(v)\in \mathcal S(\mathcal R_v^3)$ depends only on 
$v$ and satisfies the following:   there exists a constant $\tilde C_\gamma>0$, depending only on the number $\gamma$ in \eqref{kern},  such that 
\begin{equation}\label{contild+++}
  \forall\ v\in\mathbb R^3, \ \forall \ m\in\mathbb Z_+, \quad |  (1+|v|^{2+\gamma}-\Delta_v)\partial_v^m g(v) | \leq \tilde C_\gamma L_m
   \mu(v)^{\frac18},
\end{equation}  
with $L_m$   constants depending only on $m.$
Then, again possibly enlarging 
$C_0,$
\begin{equation}\label{trisole}
	\forall\ m\in\mathbb Z_+,\quad 	\big|\big( \hat{\mathcal{T}}(\hat{f},  \partial_v^m g,  \omega), \hat h(k)\big)_{L^2_v}\big |\leq C_0 \tilde C  L_m \norm{\hat f(k)}_{L^2_v}     \normm{\hat h(k)}, \end{equation}
    with  $\tilde C$ as in \eqref{contild}.   Similarly,  if $ \omega=\omega(v)$ and $g=g(v) $ satisfy \eqref{contild} and \eqref{contild+++}, respectively, then
 \begin{equation}\label{tretmate}
	\forall\ m\in\mathbb Z_+,\quad 	\big|\big( \hat{\mathcal{T}}(\partial_v^m g,  \hat{f},    \omega), \hat h(k)\big)_{L^2_v}\big |\leq C_0 \tilde C  L_m \normm{\hat f(k)}\times     \normm{\hat h(k)}. \end{equation}
Finally, we recall a key estimate for controlling the nonlinear term  $\Gamma(f,g)$ (cf. \cite[Lemma 2.5]{MR4356815}). 
 For any integer $j_0 \geq 1$  and any   $f_j \in L_k^1L_T^{\infty}L_v^2$ and any $g_j$ such that $\normm{ g_j} \in L_k^1L_T^{2}$ with $1 \leq j \leq j_0$,
  the following inequality holds:
	\begin{equation}\label{MF}
	\begin{aligned}
	&\int_{\mathbb{Z}^3}\bigg[\int_{t_1}^{t_2}\Big(\int_{\mathbb{Z}^3}\sum_{1 \leq j \leq j_0}\norm{\hat{f}_j(t,k-\ell)}_{L^2_v}\normm{  \hat{g}_j(t,\ell)}d\Sigma(\ell)\Big)^{2}dt\bigg ]^{\frac12}d\Sigma(k)\\
	& \leq  \sum_{j=1}^{j_0}\Big(\int_{\mathbb{Z}^3}\sup\limits_{t_1\leq t\leq t_2}\norm{\hat{f}_j(t,k)}_{L^2_v}d\Sigma(k)\Big)\int_{\mathbb{Z}^3}\Big(\int_{t_1}^{t_2}\normm{  \hat{g}_j(t,k)}^{2}dt\Big)^{1\over2}d\Sigma(k).
	\end{aligned}
	\end{equation}
	 This estimate follows directly from Minkowski’s inequality and Fubini’s theorem; see \cite[Lemma 2.5]{MR4356815} for details.

\subsection{Technical lemmas}
 We list several lemmas, omitting the proofs, as they will be frequently used in the subsequent discussion.
	 
	\begin{lemma}[Lemma 2.6 in \cite{MR4356815}]\label{itea}
		For any $m\geq 1$ the following estimate 
		\begin{equation*}
		\langle k \rangle^m
		\leq   \sum_{j=1}^{m-1}{m\choose j}\langle k -\ell\rangle^j
		\langle \ell\rangle^{m-j}+2\langle k -\ell\rangle^m+2\langle  \ell\rangle^m\leq   2\sum_{j=0}^{m}{m\choose j}\langle k -\ell\rangle^j
		\langle \ell\rangle^{m-j}
		\end{equation*}
		holds true for any $k,\ell \in \mathbb{Z}^3$, with
		the convention that 	the summation term over $1\leq j \leq m-1$ on the 
		right hand side disappears when $m=1$.
	\end{lemma}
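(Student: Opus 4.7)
The plan is to reduce both estimates to the elementary sub-additivity bound $\langle k\rangle\leq\langle k-\ell\rangle+\langle\ell\rangle$ on $\mathbb{Z}^3$, and then invoke the binomial theorem. To obtain that sub-additivity, I would start from the vector-space triangle inequality $|k|\leq|k-\ell|+|\ell|$ (write $k=(k-\ell)+\ell$), combined with $2|k-\ell|\,|\ell|\leq 2\sqrt{(1+|k-\ell|^2)(1+|\ell|^2)}$. Adding $1+|k-\ell|^2+|\ell|^2$ to both sides of $|k|^2\leq(|k-\ell|+|\ell|)^2$ and regrouping yields $\langle k\rangle^2\leq(\langle k-\ell\rangle+\langle\ell\rangle)^2$, hence the claim.

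Raising this to the $m$-th power and expanding by the binomial theorem gives
\[
\langle k\rangle^m \leq (\langle k-\ell\rangle+\langle\ell\rangle)^m = \sum_{j=0}^{m}{m\choose j}\langle k-\ell\rangle^j\langle\ell\rangle^{m-j}.
\]
For the leftmost inequality of the lemma, I would isolate the two boundary contributions $j=0$ and $j=m$, which equal $\langle\ell\rangle^m$ and $\langle k-\ell\rangle^m$ respectively, and bound them harmlessly by $2\langle\ell\rangle^m$ and $2\langle k-\ell\rangle^m$, leaving the intermediate terms over $1\leq j\leq m-1$ with the original binomial weights ${m\choose j}$. When $m=1$ the middle sum is empty by the stated convention, and the required bound $\langle k\rangle\leq 2\langle k-\ell\rangle+2\langle\ell\rangle$ is an immediate consequence of the sub-additivity itself.

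The rightmost inequality is then pure bookkeeping: for the inner indices one trivially has ${m\choose j}\leq 2{m\choose j}$, while the boundary pieces $2\langle k-\ell\rangle^m = 2{m\choose m}\langle k-\ell\rangle^m\langle\ell\rangle^0$ and $2\langle\ell\rangle^m = 2{m\choose 0}\langle k-\ell\rangle^0\langle\ell\rangle^m$ slot back in as the $j=m$ and $j=0$ terms of the right-hand sum $2\sum_{j=0}^{m}{m\choose j}\langle k-\ell\rangle^j\langle\ell\rangle^{m-j}$. I do not expect any genuine obstacle: the only non-routine input is the sub-additivity of $\langle\cdot\rangle$, and everything else is binomial reassembly, uniform in $k,\ell\in\mathbb{Z}^3$ (in fact in $\mathbb{R}^3$).
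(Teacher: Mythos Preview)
Your proof is correct. The paper does not supply its own proof of this lemma---it is quoted verbatim as Lemma~2.6 of \cite{MR4356815} with the proof omitted---so there is nothing to compare against; your argument via the sub-additivity $\langle k\rangle\le\langle k-\ell\rangle+\langle\ell\rangle$, binomial expansion, and bookkeeping on the boundary terms $j=0,m$ is the natural elementary route and fills the gap completely.
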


    \begin{lemma}\label{lem:fm}
		Let $A_j, j=1,2,$ be  two Fourier multipliers  with symbols $\varphi_j(k,\eta)=p_j(t)k+q_j(t)\eta$, that is,
 \begin{eqnarray*}
 \mathscr F_{x,v}	(A_j h)(k,\eta)=\varphi_j(k,\eta)\mathscr F_{x,v}h(k,\eta),
 \end{eqnarray*}
 where $\mathscr F_{x,v} h   $  denotes  the full Fourier transform in $(x,v)\in\mathbb T^3\times\mathbb R^3.$ Then for any $0 \leq t_1 < t_2 $
  \begin{equation}\label{+pse1}
  \begin{aligned}
	&\int_{\mathbb Z^3} \sup_{t_1 \leq t\leq t_2} \norm{\mathscr{F}_x\big( (A_1+A_2)^m h\big)(t,k)}_{L^2_{v}} d\Sigma(k)\\
	&\qquad\qquad 
	 \leq 2^{m} \sum_{j=1}^2\int_{\mathbb Z^3} \sup_{t_1\leq t\leq t_2} \norm{\mathscr{F}_x\big(  A_j^mh\big)(t,k)}_{L^2_{v}} d\Sigma(k)
	\end{aligned}
 \end{equation}
 and 
  \begin{equation}\label{pse1+}
  \begin{aligned}
	& \int_{\mathbb Z^3}\bigg(\int_{t_1}^{t_2} \norm{\mathscr{F}_x\big( (A_1+A_2)^mh\big)(t,k)}_{L^2_{v}}^2dt\bigg)^{\frac12} d\Sigma(k)\\
	&\qquad \leq 2^{m}\sum_{j=1}^2 \int_{\mathbb Z^3}\bigg(\int_{t_1}^{t_2} \norm{\mathscr{F}_x( A_j^m h)(t,k)}_{L^2_{v}}^2dt\bigg)^{\frac12} d\Sigma(k).
	\end{aligned}
 \end{equation}
 For $m,n\in\mathbb Z_+,$ $r\in \mathbb R,$
 \begin{equation}\label{fmn}
 	\norm{\widehat{A_1^mA_2^n h}}_{H_v^r}\leq \norm{\widehat{A_1^{m+n}  h}}_{H_v^r}+\norm{\widehat{A_2^{m+n}  h}}_{H_v^r}.
 \end{equation}
 where $\widehat h$ denotes the partial Fourier transform with respect to $x$ variable. 
	\end{lemma}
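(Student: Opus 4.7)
The plan is to reduce everything to pointwise inequalities at the level of the Fourier-multiplier symbols. Since $A_1$ and $A_2$ are Fourier multipliers in $(x,v)$ with symbols $\varphi_j(k,\eta)=p_j(t)k+q_j(t)\eta$ (with $t$ a parameter), the two operators commute, and hence the binomial expansion
\begin{equation*}
(A_1+A_2)^m=\sum_{j=0}^{m}\binom{m}{j}A_1^jA_2^{m-j}
\end{equation*}
is available; the symbol of $A_1^jA_2^{m-j}$ is simply $\varphi_1^j\varphi_2^{m-j}$.

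I would first establish \eqref{fmn}. For nonnegative reals $a,b$ and nonnegative integers $m,n$, the elementary Young-type inequality $a^mb^n\leq \max(a,b)^{m+n}\leq a^{m+n}+b^{m+n}$ yields, at the symbol level,
\begin{equation*}
|\varphi_1(k,\eta)|^m|\varphi_2(k,\eta)|^n\leq |\varphi_1(k,\eta)|^{m+n}+|\varphi_2(k,\eta)|^{m+n}.
\end{equation*}
Applying the full Fourier transform in $(x,v)$ and expressing the $H^r_v$-norm via Plancherel in $\eta$ with weight $\comi{\eta}^{r}$, the triangle inequality in this weighted $L^2_\eta$-space then gives \eqref{fmn}.

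For \eqref{+pse1} and \eqref{pse1+}, the binomial expansion combined with the triangle inequality in $L^2_v$ gives
\begin{equation*}
\norm{\mathscr F_x\bigl((A_1+A_2)^mh\bigr)(t,k)}_{L^2_v}\leq \sum_{j=0}^{m}\binom{m}{j}\norm{\mathscr F_x(A_1^jA_2^{m-j}h)(t,k)}_{L^2_v}.
\end{equation*}
Each term on the right is bounded by the $r=0$ case of \eqref{fmn}, namely $\norm{\mathscr F_x(A_1^mh)(t,k)}_{L^2_v}+\norm{\mathscr F_x(A_2^mh)(t,k)}_{L^2_v}$, and $\sum_{j=0}^m\binom{m}{j}=2^m$ produces the claimed prefactor. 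Taking $\sup_{t_1\leq t\leq t_2}$ pointwise in $k$ and then integrating in $k\in\mathbb Z^3$ yields \eqref{+pse1}; for \eqref{pse1+} the same pointwise bound is inserted inside the $L^2_t$-norm, after which the triangle inequality in $L^2_t$ commutes with the finite sum. No substantive obstacle arises: the entire argument rests on commutativity of Fourier multipliers, the binomial theorem, and the elementary pointwise Young-type bound.
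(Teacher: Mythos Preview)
Your proposal is correct and follows essentially the same approach as the paper: both arguments reduce to the pointwise symbol inequality $|\varphi_1|^m|\varphi_2|^n\leq |\varphi_1|^{m+n}+|\varphi_2|^{m+n}$, and the proof of \eqref{fmn} is identical. For \eqref{+pse1}--\eqref{pse1+} the paper bypasses the binomial expansion by applying the inequality $(p+q)^{2m}\leq (2p)^{2m}+(2q)^{2m}$ directly to $(|\varphi_1|+|\varphi_2|)^{2m}$ at the Fourier side, whereas you expand first and then invoke \eqref{fmn} term by term; the two routes are equivalent and yield the same constant $2^m$.
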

	
\begin{proof}
	This follows from direct verification (see Appendix \ref{app:inequ}). 
\end{proof}
	
	\begin{lemma}
		[Commutator estimate]\label{lem:comestimate} Let $m\geq 1$ be a given integer and let $\psi=\psi(D)$ be a Fourier multiplier satisfying
    \begin{equation}\label{psk}
      \norm{\widehat{\psi h}}_{L_v^2}\leq C_2\norm{\hat{h}}_{L_v^2} \ \textrm{ and }\    \normm{\widehat{\psi h}} \leq C_2 \normm{\hat{h}},
    \end{equation}
 where $C_2>0$ is a constant.  Define the differential operator	 
\begin{equation*}
	P=\xi(t)\partial_{x_1}+t^{\delta}\partial_{v_1},
	\end{equation*}
	where $\delta\geq 0$ is a parameter  and  $\xi=\xi(t)$  is a given continuous function on  $\mathbb R.$    Suppose    $f\in L^1_kL^{\infty}_TL^2_v$   
	for  any $T>0$ is a global  solution to  
  the Boltzmann equation \eqref{3}, and  assume that for given $0\leq t_1<t_2\leq T$ and  any   
	  $j\leq m-1$ 
\begin{multline*}\label{250524}
\int_{\mathbb{Z}^3}\sup\limits_{t_1\leq  t \leq t_2}\norm{\widehat{P^{j}f}(t,k)}_{L^2_v}d\Sigma(k)\\+ \int_{\mathbb{Z}^3}\left(\int_{t_1}^{t_2}\normm{  \widehat{P^{j}f}(t,k)}^{2}dt\right)^{\frac12}d\Sigma(k) \leq \frac{\eps_0{\tilde A}^{j}( j!)^{\tau}}{(j+1)^2},
\end{multline*}
where $\tau = \max\{1, \frac{1}{2s}\}$, and $ \eps_0, \tilde A>0$ are two given constants.  If   $\tilde A \geq 4t_2^\delta $,  then   there exists a constant $C$,  depending only on  $C_0, C_1, C_2$  in Subsection \ref{subsec:coer} and \eqref{psk}  but independent of $m, t_1,t_2$,  such that  for any $\eps >0$  
\begin{equation*}
\begin{aligned}
&\int_{\mathbb{Z}^3}\left(\int_{t_1}^{t_2}\big|\big(\mathscr F_{x}(P^{m}\Gamma(f,f)), \widehat{\psi P^{m}f}\big)_{L^2_v}\big|dt\right)^{\frac12}d\Sigma(k)
\\ & \leq    C{\eps}^{-1}\eps_0   \int_{\mathbb{Z}^3}\sup\limits_{t_1 \leq  t \leq t_2}\norm{\widehat{P^{m}f}(t,k)}_{L^2_v}d\Sigma(k)\\
&\quad +    \inner{\eps+C{\eps}^{-1}\eps_0}  \int_{\mathbb{Z}^3}\Big (\int_{t_1}^{t_2}\normm{ \widehat{P^{m}f}(t,k)}^{2}dt\Big )^{\frac12}d\Sigma(k) +C \eps^{-1}  \eps_0 \frac{\eps_0{\tilde A}^{m} (m!)^\tau}{(m+1)^2} 
\end{aligned}
\end{equation*}
and 
\begin{equation*}
	\begin{aligned}
	&\int_{\mathbb{Z}^3}\left(\int_{t_1}^{t_2}\big|\big(\mathscr F_{x}([P^{m},\  \mathcal{L}]f), \ \widehat{ \psi P^{m}f}\big)_{L^2_v}\big|dt\right)^{\frac12}d\Sigma(k)
	\\ & \leq   \eps\int_{\mathbb{Z}^3}\left(\int_{t_1}^{t_2}\normm{ \widehat{P^{m}f}(t,k)}^{2}dt\right)^{\frac12}d\Sigma(k)+C{\eps}^{-1}  \frac{\eps_0{\tilde A}^{m-1}(m!)^\tau}{(m+1)^2}.
	\end{aligned}
	\end{equation*} 
\end{lemma}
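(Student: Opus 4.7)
The plan is to expand $P^{m}\Gamma(f,f)$ and $[P^{m},\mathcal L]f$ by a Leibniz-type rule for the trilinear operator $\mathcal T$, and then invoke the trilinear estimates of Subsection~\ref{subsec:coer}, the inductive hypothesis, and the key bound~\eqref{MF}. Since $P=\xi(t)\partial_{x_1}+t^{\delta}\partial_{v_1}$ and $\partial_{x_1}$ commutes with $\mathcal T$, a direct expansion yields
\begin{equation*}
P^{m}\Gamma(f,f)=\sum_{j=0}^{m}\binom{m}{j}\mathcal T\bigl(P^{j}f,P^{m-j}f,\mu^{\frac12}\bigr)+\sum_{\substack{j_{1}+j_{2}+j_{3}=m\\ j_{3}\geq1}}c_{j_{1},j_{2},j_{3}}(t)\,\mathcal T\bigl(P^{j_{1}}f,P^{j_{2}}f,\partial_{v_{1}}^{j_{3}}\mu^{\frac12}\bigr),
\end{equation*}
where $|c_{j_{1},j_{2},j_{3}}(t)|\leq C^{m}\binom{m}{j_{1},j_{2},j_{3}}t^{\delta j_{3}}$, and each $\partial_{v_{1}}^{j_{3}}\mu^{\frac12}$ satisfies \eqref{contild}. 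A parallel expansion applies to $[P^{m},\mathcal L]f$: writing $\mathcal L f=\mathcal T(\mu^{\frac12},f,\mu^{\frac12})+\mathcal T(f,\mu^{\frac12},\mu^{\frac12})$ and using that $\partial_{x_{1}}$ commutes with $\mathcal L$, only the $t^{\delta}\partial_{v_{1}}$ part of $P$ contributes a nonzero commutator, producing a sum indexed by $0\leq j\leq m-1$ of terms of the form $\binom{m}{j}t^{\delta(m-j)}\mathcal T(\cdot,\cdot,\mu^{\frac12})$ with one argument of $\mathcal L$ replaced by $\partial_{v_{1}}^{m-j}\mu^{\frac12}$.

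For the nonlinear pairing, take the Fourier transform in $x$, pair with $\widehat{\psi P^{m}f}$, and split the sum into \emph{extremal} indices ($j\in\{0,m\}$) and \emph{middle} indices ($1\leq j\leq m-1$). The extremal terms couple $P^{m}f$ to $f$; applying \eqref{ketres} (with $\psi$ absorbed via \eqref{psk}), then \eqref{MF}, Cauchy--Schwarz in time, and Young's inequality $ab\leq\eps a^{2}+\eps^{-1}b^{2}$, together with the a priori bound \eqref{v2} for $f$, yields exactly the two pieces $C\eps^{-1}\eps_{0}\int\sup_{t}\|\widehat{P^{m}f}\|_{L_{v}^{2}}\,d\Sigma(k)$ and $(\eps+C\eps^{-1}\eps_{0})\int(\int\normm{\widehat{P^{m}f}}^{2}\,dt)^{1/2}\,d\Sigma(k)$ on the right-hand side. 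For the middle indices, the inductive hypothesis supplies factors $\eps_{0}\tilde A^{j}(j!)^{\tau}/(j+1)^{2}$ and $\eps_{0}\tilde A^{m-j}((m-j)!)^{\tau}/(m-j+1)^{2}$; combined with $\binom{m}{j}(j!)^{\tau}((m-j)!)^{\tau}\leq(m!)^{\tau}$ (valid for $\tau\geq1$) and the convolution-type bound $\sum_{j=0}^{m}[(j+1)(m-j+1)]^{-2}\leq C(m+1)^{-2}$, these produce the residual $C\eps^{-1}\eps_{0}\cdot\eps_{0}\tilde A^{m}(m!)^{\tau}/(m+1)^{2}$. The correction terms carrying $\partial_{v_{1}}^{j_{3}}\mu^{\frac12}$ are treated identically; the hypothesis $\tilde A\geq 4t_{2}^{\delta}$ is used precisely to ensure $t^{\delta j_{3}}\leq(\tilde A/4)^{j_{3}}$, yielding a convergent geometric series that is absorbed into the constant.

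The commutator bound follows the same template via \eqref{trisole} and \eqref{tretmate}: each summand is dominated by $C^{m-j}L_{m-j}\|\widehat{P^{j}f}\|_{L_{v}^{2}}\normm{\widehat{\psi P^{m}f}}$, where $L_{m-j}\leq C^{m-j}(m-j)!$ for the iterated derivatives of the Maxwellian. Cauchy--Schwarz in time and Young's inequality extract the $\eps\int(\int\normm{\widehat{P^{m}f}}^{2}dt)^{1/2}d\Sigma$ term; the remaining sum, after using $t^{\delta(m-j)}\leq(\tilde A/4)^{m-j}$ and the same combinatorial estimates as above, is bounded by $C\eps^{-1}\eps_{0}\tilde A^{m-1}(m!)^{\tau}/(m+1)^{2}$. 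The shift from $\tilde A^{m}$ to $\tilde A^{m-1}$ arises because every summand now has $m-j\geq1$, so the worst case is $j=m-1$, which contributes a single factor of $\tilde A^{j}=\tilde A^{m-1}$. The main obstacle will be setting up the Leibniz expansion of Step~1 correctly---tracking exactly which arguments of $\mathcal T$ absorb each $\partial_{v_{1}}$, keeping the polynomial-in-$v$ growth of $\partial_{v_{1}}^{j}\mu^{\frac12}$ within the Gaussian envelope required by \eqref{contild} and \eqref{contild+++}, and verifying that the multinomial coefficients combine with $\binom{m}{j}$ to give a clean $(m!)^{\tau}$ bound. Once that bookkeeping is in place, the remaining steps amount to a systematic but largely routine application of the trilinear estimates, \eqref{MF}, and Young's inequality.
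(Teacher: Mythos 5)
Your proposal is correct and follows essentially the same route as the paper (whose own proof is deferred to the analogue of Propositions~3.1 and 3.3 in \cite{MR4930523} and is carried out in detail for the operator $H$ in Lemma~\ref{lem: ma}): a Leibniz expansion of $\mathcal T$ using translation invariance in $v$, the splitting into extremal and middle indices, the trilinear bounds \eqref{upptrifour}, \eqref{ketres}, \eqref{trisole}, \eqref{tretmate} together with \eqref{mu}, the key inequality \eqref{MF}, and the combinatorial estimates $\binom{m}{j}(j!)^\tau((m-j)!)^\tau\leq (m!)^\tau$ and $\sum_j[(j+1)(m-j+1)]^{-2}\leq C(m+1)^{-2}$. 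The only point to watch is that the powers $t^{\delta j_3}\leq(\tilde A/4)^{j_3}$ reintroduce factors of $\tilde A$ into the commutator sum, so the clean $\tilde A^{m-1}$ coefficient really uses that $t_2^{\delta}\lesssim 1$ in all applications, exactly as your geometric-series bookkeeping indicates.
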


\begin{proof}
The proof is analogous to the ones given for
	 \cite[Propositions 3.1 and 3.3]{MR4930523}. We omit it for brevity. 
\end{proof}

\subsection{Subelliptic estimates}
The subelliptic structure, arising from the non-trivial interaction between the transport and collision operators, plays a crucial role in both the local and global estimates that follow. 

\begin{proposition}\label{sub:ell}
 Let $g$ be a given function, and  suppose $h$ satisfies the   linear Boltzmann equation
 \begin{equation}
     \label{linbol}
     \partial_t h+v\cdot\partial_x h-\mathcal L h=g,\quad h|_{t=0}=h_0. 
 \end{equation}
 Then there exists a constant $C>0$ and a Fourier multiplier $\mathcal M$ satisfying  \eqref{psk}, such that  for any $0\leq t_1<t_2  \leq 1,$   
 \begin{equation*}
     \begin{aligned}
      & \int_{\mathbb Z^3}\sup_{t\in [t_1,t_2]}  
	 \norm{\hat{h}}_{L^2_v}d\Sigma(k)+      \int_{\mathbb Z^3}   \Big(\int_{t_1}^{t_2}   	 \normm{\hat{h}(t,k)}^2dt \Big)^{\frac12}d\Sigma(k) \\
     &\qquad \qquad+      \int_{\mathbb Z^3} \comi{k}^{\frac{s}{1+2s}} \Big(\int_{t_1}^{t_2}   
	 \norm{\hat{h}(t,k)}^2_{L^2_v}dt \Big)^{\frac12}d\Sigma(k)\\
     &\leq  C\int_{\mathbb Z^3} 
	 \norm{\hat{h}(t_1,k)}_{L^2_v}d\Sigma(k) +C\int_{\mathbb Z^3}    \bigg(\int_{t_1}^{t_2}  \big| \big(\hat{g},  \widehat{\mathcal M h}\big)_{L_v^2}\big| dt \bigg)^{\frac12}d\Sigma(k),
     \end{aligned}
 \end{equation*}
 where  $C$ is a positive constant.
\end{proposition}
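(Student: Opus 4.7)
The plan is to Fourier-transform \eqref{linbol} in $x$, reducing it fibrewise to
\[
\partial_t \hat h + i(v\cdot k)\hat h - \mathcal L \hat h = \hat g,
\]
and then to extract three levels of control successively: an $L^\infty_t L^2_v$ bound, the dissipation in the triple norm, and the subelliptic spatial gain $\comi k^{s/(1+2s)}$. Since the transport $i(v\cdot k)$ is skew-adjoint on $L^2_v$, testing against $\hat h(t,k,\cdot)$ and invoking the coercivity \eqref{+rela} produces
\[
\tfrac12\frac{d}{dt}\|\hat h(t,k)\|_{L^2_v}^2 + C_1\normm{\hat h(t,k)}^2 \le \|\hat h(t,k)\|_{L^2_v}^2 + \big|\big(\hat g,\hat h\big)_{L^2_v}\big|.
\]
On $[t_1,t_2]\subset[0,1]$, a short Gr\"onwall argument absorbs the zeroth-order term into a universal constant; taking square roots and applying Minkowski in $L^1_k$ controls the first two left-hand terms of the proposition by $\int\|\hat h(t_1,k)\|_{L^2_v}\,d\Sigma(k)$ together with the source contribution, for the choice $\mathcal M = \mathrm{Id}$.

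The subelliptic gain is the heart of the proof. I would adapt the hypoelliptic pseudo-differential estimates that already underlie \eqref{v33} (cf.\ \cite{MR3950012,MR4356815}), exploiting the commutator identity $[i(v\cdot k),\partial_{v_j}] = -ik_j$, which exchanges velocity regularity for spatial frequency. Concretely, introduce a uniformly bounded $v$-Fourier multiplier of the form
\[
\mathcal M_{1}(t,k,D_v) = \chi(t)\,\frac{ik\cdot D_v}{\comi k^{2} + \comi{D_v}^{2(1+2s)}}\,\comi k^{2s/(1+2s)},
\]
with a time cutoff $\chi$, and form the modified energy
\[
\mathscr E(t) := \tfrac12\|\hat h\|_{L^2_v}^2 + \kappa\,\mathrm{Re}\big(\mathcal M_{1}\hat h,\hat h\big)_{L^2_v}
\]
for a small $\kappa>0$. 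Differentiating in $t$, the cross term $\mathrm{Re}\big([i(v\cdot k),\mathcal M_{1}]\hat h,\hat h\big)_{L^2_v}$ generates a coercive contribution of order $\comi k^{2s/(1+2s)}\|\hat h\|_{L^2_v}^2$; the commutator $[\mathcal L,\mathcal M_{1}]$ is subordinate to $\normm{\hat h}^2$ via \eqref{+lowoftri} and can be absorbed by choosing $\kappa$ small; and the source term reads $\big(\hat g,\widehat{\mathcal M h}\big)_{L^2_v}$ with $\mathcal M := \mathrm{Id}+\kappa\mathcal M_{1}$. The validity of \eqref{psk} for $\mathcal M$ is immediate from the boundedness of the symbol together with \eqref{+lowoftri}.

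Integrating in $t$, combining with the basic coercive estimate, taking square roots, and summing in $L^1_k$ via Minkowski then yields the full proposition. The main technical obstacle lies in calibrating $\mathcal M_{1}$: the symbol must be tuned so that (i) the commutator with $i(v\cdot k)$ produces precisely the weight $\comi k^{2s/(1+2s)}$ on the principal term, (ii) the commutator with $\mathcal L$ remains controlled by the triple-norm dissipation rather than spoiling it, and (iii) $\mathcal M$ retains the boundedness \eqref{psk}. The exponent $\tfrac{s}{1+2s}$ itself is the Kolmogorov/Bouchut-type interpolation between the $2s$ orders of velocity regularity gained from $-\mathcal L$ and the unit spatial derivative lost in the transport commutator.
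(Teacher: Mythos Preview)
Your strategy matches the paper's: test the Fourier-transformed equation against $\hat h$ for the basic energy and dissipation, then against a velocity multiplier of the schematic form $(k\cdot D_v)/(\text{weight})$ to extract the hypoelliptic gain, and set $\mathcal M=\mathrm{Id}+\kappa\times(\text{multiplier})$. The paper's concrete choice is
\[
\lambda_k(\eta)=\frac{-k\cdot\eta}{\comi{k}^{(2+2s)/(1+2s)}}\,\chi\!\Big(\frac{|\eta|}{\comi{k}^{1/(1+2s)}}\Big),\qquad \chi\in C_0^\infty([-2,2]),\ \chi\equiv1\text{ on }[-1,1],
\]
for which the key lower bound $-\,k\cdot\partial_\eta\lambda_k(\eta)\ge \comi{k}^{2s/(1+2s)}-C\comi{\eta}^{2s}$ is immediate from the support condition, and the $\comi{\eta}^{2s}$ error is absorbed by the triple-norm dissipation via \eqref{+lowoftri}. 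The term $\mathrm{Re}(\mathcal L\hat h,\lambda_k(D_v)\hat h)$ is handled, as you indicate, through $\normm{\lambda_k(D_v)\hat h}\le C\normm{\hat h}$ (boundedness of the symbol plus pseudo-differential calculus).

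One genuine slip in your write-up: with the factor $i$ (and $D_v$ having symbol $\eta$), the symbol of $\mathcal M_1$ is purely imaginary, so $(\mathcal M_1\hat h,\hat h)_{L^2_v}\in i\mathbb R$ and your cross term $\mathrm{Re}(\mathcal M_1\hat h,\hat h)_{L^2_v}\equiv 0$; the modified energy $\mathscr E$ then reduces to $\tfrac12\|\hat h\|_{L^2_v}^2$ and yields nothing new. Drop the $i$ so that the symbol is real and self-adjoint. Also drop the time cutoff $\chi(t)$, which is not needed and only produces a spurious $\chi'(t)$ term. With these fixes your rational denominator $\comi{k}^2+\comi{D_v}^{2(1+2s)}$ does work, though the coercivity check is slightly less transparent than with the paper's compactly supported cutoff.
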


\begin{corollary}
 	\label{cor:sub}
 Let $f$ be a global solution satisfying the conditions in Theorem \ref{local-gxt}. Then there exists a constant $C>0$,  such that  for any $m\in\mathbb Z_+$ and any $0\leq t_1<t_2,$    
\begin{equation}\label{sube}
	\begin{aligned}
  &\int_{\mathbb Z^3} \comi{k}^{m+\frac{s}{1+2s}} \Big(\int_{t_1}^{t_2}   t^{\frac{1+2s}{s}m} 
	 \norm{\hat{f}}^2_{L^2_v}dt \Big)^{\frac12}d\Sigma(k) \\
     &\leq  C \int_{\mathbb{Z}^3} \bigg[\sup\limits_{t_1 \leq t \leq t_2} t^{\frac{1+2s}{2s}m} \comi{k}^{m} \norm{ \hat{f} }_{L^2_v} +   \comi{k}^{m}\bigg(\int_{t_1}^{t_2}     t^{\frac{1+2s}{s}m} \normm{\hat{f} }^2 dt \bigg)^{\frac12}\bigg]d\Sigma(k)\\
	&\quad+C\int_{\mathbb Z^3}   \bigg(\int_{t_1}^{t_2}   m   t^{\frac{1+2s}{s}m-1} \comi{k}^{2m}\norm{\hat{f} }_{L_v^2}^2 dt \bigg)^{\frac12}d\Sigma(k)   \\
    &\quad+C
	\sum_{j=0}^{m}\binom{m}{j}\bigg(\int_{\mathbb{Z}^3}\sup\limits_{t_1\leq t\leq t_2} t^{\frac{1+2s}{2s}j} \comi {k}^{j}\norm{\hat{f}(t,k)}_{L^2_v}d\Sigma(k)\bigg)\\
	&\qquad\qquad\quad\qquad\quad\quad  \times \int_{\mathbb{Z}^3}\Big(\int_{t_1}^{t_2} t^{\frac{1+2s}{s}(m-j)} \comi {k}^{2(m-j)}\normm{  \hat{f}(t,k)}^{2}dt\Big)^{1\over2}d\Sigma(k).  
	\end{aligned}
\end{equation}
 \end{corollary}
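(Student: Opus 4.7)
The plan is to apply Proposition \ref{sub:ell} to a weighted version of $\hat f$ and then carefully estimate the two resulting source contributions. I would set $\phi(t,k) := t^{\frac{1+2s}{2s}m}\comi k^m$ and define $h$ by its partial Fourier transform $\hat h(t,k,v) := \phi(t,k)\hat f(t,k,v)$. Since $\phi$ depends only on $(t,k)$, it commutes with both the transport term (which acts as multiplication by $iv\cdot k$ on the partial Fourier side) and the linearized collision operator $\mathcal L$; consequently $h$ satisfies \eqref{linbol} with source $g = g_1+g_2$ given by $\hat g_1 = \phi\,\hat\Gamma(\hat f,\hat f)$ and $\hat g_2 = (\partial_t\phi)\hat f = \tfrac{1+2s}{2s}m\,t^{\frac{1+2s}{2s}m-1}\comi k^m\hat f$. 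Applying Proposition \ref{sub:ell} on $[t_1,t_2]$ (iterating on unit subintervals to cover $t_2-t_1>1$) and substituting $\|\hat h\|_{L^2_v} = t^{\frac{1+2s}{2s}m}\comi k^m\|\hat f\|_{L^2_v}$, the subelliptic gain on the left-hand side reproduces exactly the LHS of \eqref{sube}, while the initial-datum term $\int_{\mathbb Z^3}\|\hat h(t_1,k)\|_{L^2_v}\,d\Sigma(k)$ is dominated by the first (supremum) term on the RHS of \eqref{sube}.

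For the commutator source, Cauchy--Schwarz in $v$ together with the $L^2_v$ bound on $\mathcal M$ from \eqref{psk} yields the pointwise estimate $|(\hat g_2,\widehat{\mathcal Mh})_{L^2_v}| \leq C\,m\,t^{\frac{1+2s}{s}m-1}\comi k^{2m}\|\hat f\|_{L^2_v}^2$, which after taking $(\int_{t_1}^{t_2}\cdot\,dt)^{1/2}$ and integrating in $k$ produces exactly the third term on the RHS of \eqref{sube}. For the nonlinear source, the trilinear estimate \eqref{upptrifour} combined with the $\normm{\cdot}$ bound in \eqref{psk} gives
\[
|(\hat g_1,\widehat{\mathcal Mh})_{L^2_v}| \leq C\,t^{\frac{1+2s}{s}m}\comi k^{2m}\normm{\hat f(t,k)}\int_{\mathbb Z^3}\|\hat f(t,k-\ell)\|_{L^2_v}\normm{\hat f(t,\ell)}\,d\Sigma(\ell).
\]
Integrating in $t$, splitting the weight symmetrically as $t^{\frac{1+2s}{s}m} = t^{\frac{1+2s}{2s}m}\cdot t^{\frac{1+2s}{2s}m}$, and applying Cauchy--Schwarz in $t$ followed by Young's inequality $ab\leq \eps a^2+C_\eps b^2$ would isolate the factor $(\int t^{\frac{1+2s}{s}m}\comi k^{2m}\normm{\hat f(k)}^2\,dt)^{1/2}$ with a small prefactor $\eps$; this, after $k$-integration, is exactly the second RHS term of \eqref{sube}.

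The remaining factor is the main technical piece. Lemma \ref{itea} gives $\comi k^m \leq 2\sum_{j=0}^m\binom mj \comi{k-\ell}^j\comi\ell^{m-j}$, and I would simultaneously split the time weight as $t^{\frac{1+2s}{2s}m} = t^{\frac{1+2s}{2s}j}\cdot t^{\frac{1+2s}{2s}(m-j)}$. Minkowski's inequality in $t$ then pulls the finite sum over $j$ outside the $L^2_t$-norm, leaving for each $j$ a bilinear convolution integral of $t^{\frac{1+2s}{2s}j}\comi{k-\ell}^j\|\hat f(k-\ell)\|_{L^2_v}$ against $t^{\frac{1+2s}{2s}(m-j)}\comi\ell^{m-j}\normm{\hat f(\ell)}$. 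This is precisely of the form handled by \eqref{MF} with $j_0=1$, and its application produces the final $\sum_{j=0}^m\binom mj(\cdots)(\cdots)$ term on the RHS of \eqref{sube}. The main obstacle is the simultaneous book-keeping of the $t$- and $k$-weights: the Cauchy--Schwarz split in time, the splitting of $\comi k^m$ via Lemma \ref{itea}, and the final application of \eqref{MF} must align so that the $t^{\frac{1+2s}{2s}j}$ and $\comi k^j$ (respectively $t^{\frac{1+2s}{s}(m-j)}$ and $\comi k^{2(m-j)}$) exponents match those prescribed on the RHS of \eqref{sube}.
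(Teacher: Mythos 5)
Your proposal follows essentially the same route as the paper's proof: conjugate by the multiplier $t^{\frac{1+2s}{2s}m}\comi{D_x}^{m}$, apply Proposition \ref{sub:ell}, bound the $\partial_t$-commutator source by Cauchy--Schwarz to get the third right-hand term, and split the nonlinear source via \eqref{upptrifour}, Young's inequality, Lemma \ref{itea} and \eqref{MF} into the triple-norm term plus the binomial convolution sum. All of these steps are correct and match the paper's argument in Appendix B.

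One caveat: your parenthetical fix for the restriction $t_2\le 1$ in Proposition \ref{sub:ell} --- iterating over unit subintervals --- does not work as stated. Each subinterval contributes its own ``initial datum'' term $\norm{\hat h(s_j,k)}_{L^2_v}$, and summing roughly $t_2-t_1$ of these against the single supremum term yields a constant growing like $(t_2-t_1)^{1/2}$, whereas the corollary is later invoked with $t_2=T\to\infty$ (in Proposition \ref{5.1}) and so needs a $T$-uniform constant. The correct resolution is to rerun the energy identity \eqref{B6}--\eqref{B55} directly on $[t_1,t_2]$: the only obstruction to large time there is the term $\int_{t_1}^{t_2}\norm{\hat h}_{L^2_v}^2\,dt$ coming from \eqref{+rela}, which is controlled by $C\int_{t_1}^{t_2}\normm{\hat h}^2\,dt$ via \eqref{+lowoftri} and is already present (with weight) as the second term on the right-hand side of \eqref{sube}, so no Gronwall factor or subdivision is needed.
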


For the sake of completeness, we provide a sketch of the proof of Proposition \ref{sub:ell}  and Corollary \ref{cor:sub} 
in Appendix \ref{app:sub}; interested readers may refer  to \cite{MR3950012,MR4356815} for the full details.

\section{Local-in-time sharp estimate on radius}\label{sec:local}

In this section, we prove Theorem \ref{local-gxt}, establishing a local-in-time radius estimate that appears to be sharp, as it agrees with the observation \eqref{localestimate} for the toy model \eqref{fFP1}.

The higher-order regularity of $L_k^1 L_T^\infty L_v^2 $-mild solutions for positive times was established in \cite{MR4356815} using subtle pseudo-differential techniques. In particular, with the regularization operators introduced in \cite{MR4356815}, one can show that for any  $m\in\mathbb Z_+$ and any $T>0,$
\begin{equation} \label{aprio}
\int_{\mathbb{Z}^3}\sup\limits_{0 < t \leq T} \norm{ \widehat {\Phi^{m}f} (t,k)}_{L^2_v}d\Sigma(k)+ \int_{\mathbb{Z}^3}\left(\int_{0}^{T} \normm{ \widehat {\Phi^{m}f} (t,k)}^{2}dt\right)^{\frac12}d\Sigma(k) <+\infty,
\end{equation}
where, here and below,
$\Phi$ is either $P_1$ or $P_2$ as defined in \eqref{p12}. Moreover
\begin{equation} \label{shorttime}
 \forall\ m\geq 1,\  \forall\ k\in\mathbb Z^3,\quad   
    \lim_{t\rightarrow 0} \norm{ \widehat {\Phi^{m}f} (t,k)}_{L^2_v}=0.
\end{equation}
We refer to \cite{MR4356815} for the proof of \eqref{aprio} and \eqref{shorttime}. 
Since the present paper works with  smooth solutions, we will assume throughout the sequel that \eqref{aprio} and \eqref{shorttime} hold.  This ensures that all computations involving
$\Phi^m f$ remain rigorous in the subsequent discussion.

To simplify the notations, we will use the capital letter $C$ to denote a generic positive constant that may vary from line to line and depends only on the constants $C_0,C_1$ introduced in   Section \ref{sec:prelim}.  
Note that  these generic constants $C$ as below are independent of the derivative order denoted by $m$.    

\begin{proposition} \label{prp:ve}
	   Suppose the hypothesis of Theorem \ref{local-gxt} is fulfilled.      Then  there exist  two sufficiently small constants $0<\epsilon<\eps_0$ and a large constant $N\geq 4$, with $N$ depending only on  the numbers $C_0,C_1$ in  Section \ref{sec:prelim},  such that if the initial datum  $f_0$ in \eqref{3}  satisfies 
that
\begin{equation*}
	\norm{f_0}_{L_k^1 L_v^2}\leq \epsilon,
\end{equation*}
then for any $m\in\mathbb Z_+$ 
  the  following estimate holds:
 	\begin{multline}\label{pjm0}
 	\int_{\mathbb{Z}^3}\sup\limits_{   t\leq 1} \norm{  \widehat{\Phi^{m}f}(t,k)}_{L^2_v}d\Sigma(k) +  \int_{\mathbb{Z}^3}\left(\int_{0}^{1}  \normm{  \widehat{\Phi^{m}f}(t,k)}^{2}dt\right)^{1\over2}d\Sigma(k) \\
 	 + \int_{\mathbb{Z}^3}\comi k^{\frac{s}{1+2s}}\left(\int_{0}^{1}  \normm{    \widehat{\Phi^m f} (t,k)}^{2}dt\right)^{\frac12}d\Sigma(k)   \leq \frac{\eps_0 N^{m}  (m!)^\tau }{(m+1)^2},
 	\end{multline}
 	where $\tau= \max\{1,\frac{1}{2s}\}$ and 
$\Phi$ denotes either $P_1$ or $P_2$ as defined in \eqref{p12}. The above estimate remains true if we replace $\Phi$ with any  of the following operators:
\begin{equation*}
	 \frac{2s}{1+2s}t^{\frac{1+2s}{2s}} \partial_{x_i}+ t^{\frac{1}{2s}} \partial_{v_i}\ \textrm{ or }\    t^{\frac{1}{2s}} \partial_{v_i},  \quad  i=2,3.
\end{equation*}
 
\end{proposition}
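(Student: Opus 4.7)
The plan is to argue by induction on $m$, using the subelliptic estimate Proposition~\ref{sub:ell} as the main analytic tool. For the base case $m=0$, apply Proposition~\ref{sub:ell} to $h = f$ with source $g = \Gamma(f,f)$, combined with the trilinear bound \eqref{upptrifour}, Lemma~\ref{lem:fm}, and the a priori bounds \eqref{v2}, \eqref{longtime}, \eqref{v33}. The smallness assumption \eqref{smallass} yields the required estimate with $\eps_0$ proportional to $\epsilon$, so $\eps_0$ can be taken as small as required.

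For the inductive step, suppose \eqref{pjm0} holds for all $0 \le j \le m-1$ and set $h := \Phi^m f$. By \eqref{shorttime}, $\hat h(0,k) = 0$. Using the commutation relation \eqref{pmc} (for $\Phi = P_1$) or an analogous direct computation (for $\Phi = P_2$), $h$ satisfies the linear Boltzmann equation $(\partial_t + v \cdot \partial_x - \mathcal L) h = g$ with
\[
g = [\mathcal L, \Phi^m]f + \Phi^m \Gamma(f,f) + r_m,
\]
where $r_m$ is the source produced by the transport commutator. For $\Phi = P_1$, rewriting $\partial_{v_1} = t^{-1/(2s)} P_2$ gives $r_m = \tfrac{m}{2s}\, t^{-1} P_2 P_1^{m-1} f$; for $\Phi = P_2$, the identity $\partial_{x_1} = \tfrac{1+2s}{2s}\bigl(t^{-(1+2s)/(2s)} P_1 - t^{-1}\partial_{v_1}\bigr)$ expresses $r_m$ as a linear combination of $t^{-1} P_2^m f$ and $t^{-1} P_2^{m-1} P_1 f$. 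Applying Proposition~\ref{sub:ell} to $h$ on $[0,1]$ and using $\hat h(0,k)=0$, the LHS of \eqref{pjm0} is controlled by $C\int_{\mathbb Z^3}\bigl(\int_0^1|(\hat g,\widehat{\mathcal M h})_{L^2_v}|\,dt\bigr)^{1/2}\,d\Sigma(k)$. The contributions of $[\mathcal L, \Phi^m]f$ and $\Phi^m \Gamma(f,f)$ are handled directly by Lemma~\ref{lem:comestimate} with $\psi = \mathcal M$, $\delta = 1/(2s)$, and $\tilde A = N$; the hypothesis $\tilde A \ge 4 t_2^\delta$ reduces to $N \ge 4$. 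Combined with the inductive assumption, this yields for any $\eps > 0$ a bound of the form
\[
(\eps + C\eps^{-1}\eps_0)\,(\text{LHS of \eqref{pjm0} at level } m) + C(\eps^{-1}\eps_0 + \eps^{-1} N^{-1}) \cdot \frac{\eps_0 N^m (m!)^\tau}{(m+1)^2}.
\]

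The chief obstacle is the term $r_m$, which carries a formally singular factor $t^{-1}$. Bounding by Cauchy-Schwarz, \eqref{psk}, and \eqref{fmn},
\[
|(\hat r_m, \widehat{\mathcal M h})_{L^2_v}| \le Cm\, t^{-1}\bigl(\norm{\widehat{P_1^m f}}_{L^2_v} + \norm{\widehat{P_2^m f}}_{L^2_v}\bigr)\norm{\hat h}_{L^2_v}.
\]
The singularity is tamed by exploiting that $\norm{\widehat{\Phi^m f}(t,k)}_{L^2_v} = O(t^{m/(2s)})$ as $t \to 0$ (inherited from the prefactor structure of $P_1, P_2$ together with \eqref{shorttime}), so that effectively $t^{-1}$ is rescued to the integrable $t^{m/s - 1}$. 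Quantitatively, a weighted Cauchy-Schwarz in $t$ reduces the contribution of $r_m$ to $\eps\,(\text{LHS at level } m) + C\eps^{-1} m^2 \cdot (\text{inductive quantities of orders up to } m)$, where mixed-derivative terms such as $P_2^{m-1}P_1 f$ are controlled via \eqref{fmn} by $P_1^m f$ and $P_2^m f$ quantities already in play. The factor $m^2$ is absorbed into $N^m (m!)^\tau / (m+1)^2$ provided $N$ is chosen large enough.

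Finally, choose $\eps$ small (to absorb triple-norm contributions into the LHS of Proposition~\ref{sub:ell}), then $\eps_0$ and $\epsilon$ small so that $\eps^{-1}\eps_0 \ll 1$, and finally $N$ large relative to $C_0, C_1, C_2$ and all implicit constants. This closes the induction at step $m$ and yields the required bound $\eps_0 N^m (m!)^\tau/(m+1)^2$. The variants with $\partial_{x_i}, \partial_{v_i}$ for $i = 2, 3$ follow by exactly the same argument after an obvious coordinate rotation.
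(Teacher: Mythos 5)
Your overall architecture matches the paper's: induction on $m$, base case from \eqref{v2} and \eqref{v33}, Proposition \ref{sub:ell} applied to $h=\Phi^m f$ with $\hat h(0,k)=0$ via \eqref{shorttime}, and Lemma \ref{lem:comestimate} for $[\Phi^m,\mathcal L]f$ and $\Phi^m\Gamma(f,f)$. The gap is in your treatment of the transport commutator $r_m$, which is the heart of the proof. You propose to tame the singular factor $t^{-1}$ by claiming $\norm{\widehat{\Phi^m f}(t,k)}_{L^2_v}=O(t^{m/(2s)})$ as $t\to 0$, "inherited from the prefactor structure of $P_1,P_2$ together with \eqref{shorttime}". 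This is not available: $\norm{\widehat{P_2^m f}}_{L^2_v}=t^{m/(2s)}\norm{\partial_{v_1}^m\hat f}_{L^2_v}$, and $\norm{\partial_{v_1}^m\hat f(t,k)}_{L^2_v}$ blows up as $t\to 0$ for data that are merely $L^1_kL^2_v$ (the regularity is generated for $t>0$, which is the whole point); \eqref{shorttime} gives convergence to zero with no rate. Consequently your "weighted Cauchy--Schwarz in $t$" step, which is where the claimed bound $\eps\,(\text{LHS at level }m)+C\eps^{-1}m^2(\text{inductive quantities})$ would have to come from, has no justification as written: the term $m\,t^{-1}\norm{\widehat{P_2^m f}}^2_{L^2_v}$ is not integrable on $(0,1)$ by size considerations alone.

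The mechanism that actually works (Lemmas \ref{lemm1}--\ref{lem:m2}) is different in kind: one interpolates in \emph{velocity regularity}, e.g.\ $\norm{g}^2_{L^2_v}\le\tilde\eps\norm{g}^2_{H^s_v}+\tilde\eps^{-(1-s)/s}\norm{g}^2_{H^{s-1}_v}$ with $\tilde\eps=\eps m^{-1}t$, so that the $H^s$ piece costs only $\eps$ and is absorbed by the triple norm via \eqref{+lowoftri}, while the singular power $t^{-1/s}$ multiplying the $H^{s-1}$ piece is cancelled by peeling off one $\partial_{v_1}$ (i.e.\ one factor $P_2=t^{1/(2s)}\partial_{v_1}$), landing on $\normm{\widehat{P_j^{m-1}f}}^2$ with a loss $m^{1/s}$ that the inductive gain $((m-1)!)^\tau\mapsto(m!)^\tau$ absorbs since $\tau\ge 1/(2s)$. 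For the $P_1^m$ commutator this further splits into the cases $0<s\le\frac12$ and $\frac12<s<1$ (the $H^{-s}$ interpolation between $H^s$ and $H^{s-1}$ is only valid for $s\le\frac12$), and the reconversion $\partial_{x_1}=\frac{1+2s}{2s}(t^{-(1+2s)/(2s)}P_1-t^{-1}\partial_{v_1})$ produces the $|k|^{2s/(1+2s)}\norm{\widehat{P_j^m f}}^2_{L^2_v}$ terms that are precisely why the third, $\comi k^{s/(1+2s)}$-weighted, term must be carried in the induction hypothesis \eqref{pjm0}; your sketch does not account for either point. So while your skeleton is right, the key estimate is not established by the argument you give.
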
 

The rest part of this section is devoted to proving Proposition \ref{prp:ve}. We proceed by induction on 
$m$ to establish inequality \eqref{pjm0}. 
The case 
$m=0$
 follows directly from \eqref{v2} and \eqref{v33}.  Now
 assume 
 $m\geq 1$ and  that for all  $ j \leq m-1,$ the following estimate holds:
  \begin{multline}\label{tve}
\int_{\mathbb{Z}^3}\sup\limits_{  t\leq 1} \norm{\widehat{\Phi^j f} (t,k)}_{L^2_v}d\Sigma(k)  + \int_{\mathbb{Z}^3}\left(\int_{0}^{1}\normm{  \widehat{\Phi^{j}f}(t,k)}^{2}dt\right)^{\frac12}d\Sigma(k)  \\  
 + \int_{\mathbb{Z}^3}\comi k^{\frac{s}{1+2s}}\left(\int_{0}^{1}  \normm{    \widehat{\Phi^j f} (t,k)}^{2}dt\right)^{\frac12}d\Sigma(k) \leq \frac{\eps_0 N^{j}( j!)^\tau }{(j+1)^2}.
\end{multline} 
 It remains to  prove \eqref{tve}  for $j=m.$  To do so, applying $\Phi^m$ to equation \eqref{3} yields
 \begin{equation*}
    \big(\partial_t + v\cdot\partial_x - \mathcal{L}\big)\Phi^m f 
    = \big[\partial_t + v\cdot\partial_x, \Phi^m\big]f 
    + [\Phi^m, \mathcal{L}] f 
    + \Phi^m\Gamma(f, f).
\end{equation*}
Using Proposition \ref{sub:ell} and \eqref{shorttime}, we derive for $m\geq 1$ that  
\begin{equation}\label{phif}
    \begin{aligned}
    & \int_{\mathbb{Z}^3} \sup_{t\leq 1} \norm{\widehat{\Phi^m f}(t,k) }_{L^2_v} d\Sigma(k)
    + \int_{\mathbb{Z}^3} \left( \int_0^1 \normm{\widehat{\Phi^m f}(t,k)}^2 dt \right)^{\frac12} d\Sigma(k) \\
    &\quad + \int_{\mathbb{Z}^3} \comi{k}^{\frac{s}{1+2s}} 
        \left( \int_0^1 \norm{\widehat{\Phi^m f}(t,k)}_{L^2_v}^2 dt \right)^{\frac12} d\Sigma(k) \\
    &\leq C \int_{\mathbb{Z}^3} \left( \int_0^1 \big| \big( \widehat{g_m}, \widehat{\mathcal{M}\Phi^m f} \big)_{L_v^2} \big| dt \right)^{\frac12} d\Sigma(k),
    \end{aligned}
\end{equation}
where
\begin{equation}\label{defg}
    g_m := \big[\partial_t + v\cdot\partial_x, \Phi^m\big]f 
        + [\Phi^m, \mathcal{L}] f 
        + \Phi^m\Gamma(f, f).
\end{equation}
By Lemma \ref{lem:comestimate}, for any $\varepsilon > 0$,
\begin{equation*}
    \begin{aligned}
    & \int_{\mathbb{Z}^3} \left( \int_0^1 \left| \left( \mathscr F_{x}\big([\Phi^m, \mathcal{L}] f + \Phi^m\Gamma(f, f)\big), \widehat{\mathcal{M}\Phi^m f} \right)_{L^2_v} \right| dt \right)^{\frac12} d\Sigma(k) \\
    &\leq C\varepsilon^{-1} \eps_0\int_{\mathbb{Z}^3} \sup_{t \leq 1} \| \widehat{\Phi^m f}(t,k) \|_{L^2_v} d\Sigma(k) \\
    &\quad+ \big(\eps+C\varepsilon^{-1} \eps_0\big) \int_{\mathbb{Z}^3} \left( \int_0^1 \normm{\widehat{\Phi^m f}}^{2} dt \right)^{\frac12} d\Sigma(k)  +  C \eps^{-1}\big(  \eps_0+N^{-1}\big)\frac{\eps_0 N^{m} (m!)^\tau}{(m+1)^2}.
    \end{aligned}
\end{equation*}
Moreover we claim that,  for any $0<\varepsilon<1,$
\begin{equation}\label{sdf}
		\begin{aligned}
		 &\int_{\mathbb{Z}^3} \left( \int_0^1 \left| \left( \mathscr F_{x}\big([\partial_t + v\cdot\partial_x, \Phi^m] f\big), \widehat{\mathcal{M}\Phi^m f} \right)_{L^2_v} \right| dt \right)^{\frac12} d\Sigma(k)\\
		 &\leq 	\eps \sum_{j=1}^2\int_{\mathbb{Z}^3} \left( \int_0^1 \normm{\widehat{P_j^m f}(t,k)}^2 dt \right)^{\frac12} d\Sigma(k)\\
         &\quad+\varepsilon \sum_{j=1}^2 \int_{\mathbb{Z}^3} |k|^{\frac{s}{1+2s}} 
        \left( \int_0^1 \| \widehat{P_j^m f} \|_{L^2_v}^2 dt \right)^{\frac12} d\Sigma(k)+C  \varepsilon^{-\frac{1+s}{s}}   N^{-1} \frac{\varepsilon_0 N^{m} (m!)^\tau}{(m+1)^2}.
		\end{aligned}
	\end{equation}
Assuming \eqref{sdf} holds, we substitute  the two estimates above into \eqref{phif}, and then choose sufficiently small $\varepsilon$ and use the smallness of $\eps_0;$   this gives 
\begin{equation*}
	\begin{aligned}
		&\sum_{j=1}^2\int_{\mathbb{Z}^3}  \sup_{t\leq 1} \norm{\widehat{P_j^m f}(t,k)}_{L_v^2}  d\Sigma(k)+\sum_{j=1}^2\int_{\mathbb{Z}^3} \left( \int_0^1 \normm{\widehat{P_j^m f}(t,k)}^2 dt \right)^{\frac12} d\Sigma(k)\\
		 &\  \  \ +\sum_{j=1}^2 \int_{\mathbb{Z}^3} \comi{k}^{\frac{s}{1+2s}} 
        \left( \int_0^1 \norm{\widehat{P_j^m f}(t,k)}_{L^2_v}^2 dt \right)^{\frac12} d\Sigma(k) \leq   C  \big(\eps_0+N^{-1}\big)\frac{\eps_0 N^{m} (m!)^\tau}{(m+1)^2}.
	\end{aligned}
\end{equation*}
 Therefore, \eqref{tve} holds for $j=m$ provided that \begin{equation*}
    C  \big(\eps_0+N^{-1}\big)\leq 1. 
 \end{equation*}
 Then the desired estimate \eqref{pjm0} follows.  The proof of Proposition \ref{prp:ve} is completed  once assertion \eqref{sdf} is verified. 
In the following, we will prove  \eqref{sdf} via the three lemmas below.

\begin{lemma}[$m=1$]\label{lemm1}
	Under the inductive assumption \eqref{tve}, we have, for any $0<\eps<1,$
	\begin{equation*}
		 \begin{aligned}
    &  \int_{\mathbb{Z}^3} \left( \int_0^1 \left| \left(\mathscr F_{x}\big([\partial_t + v\cdot\partial_x, \Phi] f\big), \widehat{\mathcal{M}\Phi f} \right)_{L^2_v} \right| dt \right)^{\frac12} d\Sigma(k) \\
    &\leq \varepsilon  \int_{\mathbb{Z}^3} \left( \int_0^1 \normm{ \widehat{P_2f}(t,k)}^2 dt \right)^{\frac12} d\Sigma(k) \\
    &\qquad + \varepsilon \sum_{j=1}^2 \int_{\mathbb{Z}^3} |k|^{\frac{s}{1+2s}} 
        \left( \int_0^1 \| \widehat{P_j f} (t,k)\|_{L^2_v}^2 dt \right)^{\frac12} d\Sigma(k)  + C\varepsilon^{-\frac{1+s}{s}}\eps_0,
    \end{aligned}
	\end{equation*}
    where the Fourier multiplier $\mathcal{M}$ satisfies \eqref{psk}.
\end{lemma}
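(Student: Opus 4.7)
The plan is to compute the commutator $[\partial_t+v\cdot\partial_x,\Phi]$ explicitly, rewrite it as a scalar linear combination of $t^{-1}P_1f$ and $t^{-1}P_2f$, and then apply a tailored Cauchy--Schwarz--Young argument that trades the singular temporal weight $t^{-1}$ for the subelliptic spatial gain $\comi{k}^{s/(1+2s)}$ available from the inductive hypothesis \eqref{tve} at $j=0$.

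Setting $a(t)=\frac{2s}{1+2s}t^{(1+2s)/(2s)}$ and $b(t)=t^{1/(2s)}$, so that $P_1=a(t)\partial_{x_1}+b(t)\partial_{v_1}$ and $P_2=b(t)\partial_{v_1}$, the cancellation $a'(t)=b(t)$ yields $[\partial_t+v\cdot\partial_x,P_1]f=b'(t)\partial_{v_1}f=\frac{1}{2s}t^{(1-2s)/(2s)}\partial_{v_1}f$, while $[\partial_t+v\cdot\partial_x,P_2]f=b'(t)\partial_{v_1}f-b(t)\partial_{x_1}f$. Using $\partial_{v_1}f=t^{-1/(2s)}P_2f$ and $\partial_{x_1}f=\frac{1+2s}{2s}t^{-(1+2s)/(2s)}(P_1-P_2)f$, each commutator reduces to a scalar combination of $t^{-1}P_1f$ and $t^{-1}P_2f$, so the task becomes bounding inner products of the form $\int_0^1 t^{-1}|(\widehat{P_if},\widehat{\mathcal M P_{\Phi}f})_{L^2_v}|\,dt$.

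To tame the singular $t^{-1}$, I further split $\widehat{\mathcal M P_1f}=ia(t)k_1\widehat{\mathcal Mf}+\widehat{\mathcal M P_2f}$ inside each inner product via $P_1=a(t)\partial_{x_1}+P_2$. The piece carrying $\widehat{\mathcal Mf}$ now comes with the benign weight $a(t)t^{-1}=\frac{2s}{1+2s}t^{1/(2s)}$ together with a factor $k_1$ that I split as $|k_1|^{s/(1+2s)}\cdot|k_1|^{(1+s)/(1+2s)}$; after Cauchy--Schwarz in $v$ and Young's inequality with the exponents $p=(1+2s)/(1+s)$ and $q=(1+2s)/s$ (which satisfy $1/p+1/q=1$ and $q/p=(1+s)/s$, producing exactly the required $\varepsilon^{-(1+s)/s}$ scaling), the first fractional power of $k_1$ furnishes the target $\varepsilon|k|^{s/(1+2s)}\|\widehat{P_jf}\|_{L^2_v}$ contribution, while the surplus $|k_1|^{(1+s)/(1+2s)}t^{1/(2s)}\|\hat f\|_{L^2_v}$ is absorbed through the hypoelliptic scaling $t\sim|k|^{-2s/(1+2s)}$ and the inductive bound $\int_{\mathbb Z^3}\comi k^{s/(1+2s)}(\int_0^1\normm{\hat f}^2\,dt)^{1/2}\,d\Sigma(k)\leq\varepsilon_0$. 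For the remaining diagonal piece $t^{-1}(\widehat{P_2f},\widehat{\mathcal M P_2f})$, I use $H^s_v$--$H^{-s}_v$ duality: the bound $\|\widehat{P_2f}\|_{H^{-s}_v}\leq t^{1/(2s)}\|\hat f\|_{H^{1-s}_v}$ together with the triple-norm coercivity $\|\hat f\|_{H^s_v}\leq C\normm{\hat f}$ from \eqref{+lowoftri} (valid for $\gamma\geq 0$) compensates $t^{-1}$ into the nonsingular $t^{1/(2s)-1}\normm{\hat f}\normm{\widehat{P_2f}}$, after which the same Young's inequality yields the $\varepsilon\normm{\widehat{P_2f}}$ term plus an $\varepsilon^{-(1+s)/s}\varepsilon_0$ residual.

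The principal obstacle is precisely this singular temporal factor $t^{-1}$: a direct Cauchy--Schwarz would require the non-integrable $\int_0^1t^{-1}\|\widehat{P_jf}\|_{L^2_v}^2\,dt$, since the fractional-heat scaling $\|\partial_{v_1}f\|_{L^2_v}\sim t^{-1/(2s)}$ forces $\|\widehat{P_jf}\|_{L^2_v}$ to remain bounded (rather than vanishing) as $t\to 0^+$. The structural decomposition $P_1=a(t)\partial_{x_1}+P_2$ is what rescues the estimate: it converts the dangerous $t^{-1}$ into the benign $t^{1/(2s)}$ at the cost of a $k_1$ factor, which is then reabsorbed into the hypoelliptic subelliptic gain. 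A secondary delicate point arises in the regime $s<1/2$, where $H^{1-s}_v\not\subset H^s_v$; there one must instead exploit the finer velocity-regularity information available from Proposition \ref{sub:ell} applied to $\hat f$ itself to bound $\|\hat f\|_{H^{1-s}_v}$ by quantities controlled through the induction hypothesis.
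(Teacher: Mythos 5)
Your overall strategy (computing the commutators, converting $\partial_{x_1}$ and $\partial_{v_1}$ back into $P_1,P_2$, and trading temporal singularities for the subelliptic weight $|k|^{s/(1+2s)}$ plus the $H^s_v$--$H^{s-1}_v$ interpolation) is the same as the paper's, and the commutator identities are correct. However, the specific decomposition you choose produces terms that cannot be closed. The key problem is the cross term $t^{1/(2s)}|k_1|\,\|\widehat{P_2f}\|_{L^2_v}\|\hat f\|_{L^2_v}$ created by splitting $\widehat{\mathcal M P_1f}=ia(t)k_1\widehat{\mathcal Mf}+\widehat{\mathcal MP_2f}$. After extracting $|k_1|^{s/(1+2s)}\|\widehat{P_2f}\|$ for the target, the surplus factor is $t^{1/(2s)}|k_1|^{(1+s)/(1+2s)}\|\hat f\|$, and its square $t^{1/s}|k_1|^{2(1+s)/(1+2s)}\|\hat f\|^2$ exceeds the available control $|k|^{2s/(1+2s)}\|\hat f\|^2$ by a factor $t^{1/s}|k_1|^{2/(1+2s)}$, which is unbounded (take $t$ close to $1$ and $|k_1|$ large). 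The ``hypoelliptic scaling $t\sim|k|^{-2s/(1+2s)}$'' you invoke is a heuristic, not an inequality: $t$ and $k$ are independent variables here. The paper avoids this by \emph{first} applying Cauchy--Schwarz so that the $P_1$ factor appears squared, $t^{-1}\|\widehat{P_1f}\|^2\leq Ct^{(1+s)/s}\|k_1\hat f\|^2+Ct^{1/s-1}\|\partial_{v_1}\hat f\|^2$; the point is that both powers of $a(t)$ then accompany $|k_1|^2$, and the Young step $|k_1|^{2(1+s)/(1+2s)}\leq \eps t|k_1|^2+(\eps t)^{-(1+s)/s}$ with parameter proportional to $t$ exactly balances: the first piece recombines into $\eps(\|\widehat{P_1f}\|^2+\|\widehat{P_2f}\|^2)$ and the second has residual $t$-power zero. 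With only one power of $a(t)$, as in your cross term, no choice of Young parameter balances (one side is always left with a negative power of $t$). Relatedly, your Young exponents $p=\frac{1+2s}{1+s}$, $q=\frac{1+2s}{s}$ applied to the norms themselves would produce powers $\|\widehat{P_2f}\|^p$ with $p<2$, incompatible with the $L^1_kL^2_t$ structure of the conclusion; they are only correct when applied to powers of $|k_1|$ as in the paper's \eqref{ksub}.

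The diagonal piece also has a gap. Bounding $t^{-1}|(\widehat{P_2f},\widehat{\mathcal MP_2f})|\leq Ct^{1/(2s)-1}\|\hat f\|_{H^{1-s}_v}\normm{\widehat{P_2f}}$ and then applying Young leaves $C\eps^{-1}t^{1/s-2}\normm{\hat f}^2$, whose time integral is \emph{not} controlled by $\int_0^1\normm{\hat f}^2dt$ when $s>1/2$ (the weight $t^{1/s-2}$ is unbounded near $t=0$), and the embedding $H^{1-s}_v\subset H^s_v$ fails outright for $s<1/2$, as you note without resolving it. The paper's interpolation $\|g\|_{L^2_v}^2\leq\tilde\eps\|g\|_{H^s_v}^2+\tilde\eps^{-(1-s)/s}\|g\|_{H^{s-1}_v}^2$ with $\tilde\eps=\eps t$, applied to $g=\partial_{v_1}\hat f$, is calibrated so that the residual $t$-power is exactly $0$ and only $\|\hat f\|_{H^s_v}\lesssim\normm{\hat f}$ is needed, uniformly in $s\in(0,1)$; you should use this interpolation (with the $t$-dependent parameter) in place of the crude $H^{-s}$--$H^{s}$ duality.
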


\begin{proof}
	We first derive the estimate
\begin{equation}
    \label{vvv1}
   \big|\big(\mathscr F_{x}\big([\partial_t+v\cdot\partial_x, \Phi] f\big), \widehat{\Phi f}\big)_{L^2_v}\big|
 \leq   C  t^{\frac{1+s}{s}}   \|  k_1 \hat{  f} \|_{L^2_v}^2 +  C  t^{\frac{1 }{s}-1} \|\partial_{v_1} \hat{ f}\|^2_{L^2_v},
\end{equation}
recalling that $\Phi$ is either $P_1$ or $P_2$ as defined in \eqref{p12}. 
First, let $\Phi = P_1$. From \eqref{pmc}, we have
\begin{equation*}
[\partial_t+v\cdot\partial_x, P_1]=\frac{1}{2s}t^{\frac{1-2s}{2s}}\partial_{v_1}.
\end{equation*}
Then
\begin{equation*}  
\begin{aligned}
  \big|\big(\mathscr F_{x}\big([\partial_t+v\cdot\partial_x, \Phi] f\big), \widehat{\Phi f}\big)_{L^2_v}\big|
  &=\frac{1}{2s} t^{\frac{1-2s}{2s}} \big|\big( \partial_{v_1} \hat{f}, \widehat{P_1f}\big)_{L^2_v}\big|\\
 &\leq \frac{1}{2s}  t^{-1} \|\widehat{P_1f}\|^2_{L^2_v} +   \frac{1}{2s} t^{\frac{1 }{s}-1} \|\partial_{v_1} \hat{ f}\|^2_{L^2_v}.
\end{aligned}
\end{equation*}
From the definition of $P_1$ in \eqref{p12}, we obtain
\begin{equation*}\label{Pff1}
\begin{aligned}
  t^{-1} \| \widehat{P_1 f} \|_{L^2_v}^2  
\leq  C t^{\frac{1+s}{s}}\|k_1 \hat{  f} \|_{L^2_v}^2+ C t^{\frac{1}{s}-1} \|   \partial_{v_1} \hat{  f} \|_{L^2_v}^2 .
\end{aligned}
\end{equation*}
Combining these estimates  yields  \eqref{vvv1} for $\Phi = P_1$.

Now let $\Phi = P_2 = t^{\frac{1}{2s}} \partial_{v_1}$. Then
\begin{equation*}
    [\partial_t+v\cdot\partial_x, \Phi]=\frac{1}{2s}t^{\frac{1}{2s}-1}\partial_{v_1}-t^{\frac{1}{2s}}\partial_{x_1},
\end{equation*}
and thus
\begin{equation*}  
\begin{aligned}
  \big|\big(\mathscr F_{x}\big([\partial_t+v\cdot\partial_x, \Phi] f\big), \widehat{\Phi f}\big)_{L^2_v}\big|\leq C  t^{\frac{1 }{s}-1} \|\partial_{v_1} \hat{ f}\|^2_{L^2_v}+ C  t^{\frac{1+s}{s}}   \|  k_1 \hat{  f} \|_{L^2_v}^2.
\end{aligned}
\end{equation*}
So \eqref{vvv1} holds for $\Phi=P_2.$
 
For the last term in \eqref{vvv1}, we use the interpolation inequality:  
\begin{equation*}\label{lowinter123}
    \forall\  \tilde{\varepsilon}>0,\quad \|g\|^2_{L_v^{2}} \leq \tilde{\varepsilon} \|g\|^{2}_{H^{s}_v} + \tilde{\varepsilon}^{-\frac{1-s}{s}}\|g\|^{2}_{H^{s-1}_v}.
\end{equation*}
Letting $\tilde \eps= \eps t$ and using the fact that $\|\partial_{v_1}\hat{f}\|_{H^{s}_v} \leq \normm{\partial_{v_1}\hat{f}}$ due to \eqref{+lowoftri}, we obtain
\begin{equation}\label{esti-v}
    \begin{aligned}
          t^{{\frac{1}{s} }-1}\|\partial_{v_1}\hat{f}\|_{L_v^2}^2 \leq \eps t^{{\frac{1}{s} }}\|\partial_{v_1}\hat{f}\|_{H_v^s}^2+\eps^{-\frac{1-s}{s}}  \|\partial_{v_1}\hat{f}\|_{H_v^{s-1}}^2 
        &\leq \varepsilon \normm{ \widehat{P_2f}}^2 + \varepsilon^{-\frac{1-s}{s}}  \normm{ \hat{f}}^2,
    \end{aligned}
\end{equation}
where  the last inequality uses \eqref{+lowoftri} and the fact that $P_2 = t^{\frac{1}{2s}} \partial_{v_1}$.
 
For the first term on the right-hand side of \eqref{vvv1}, we apply the inequality
\begin{align*}
   \| |k_1|^{\frac{1+s}{1+2s}} \hat{  f} \|_{L^2_v}^2 \leq \epsilon   \| k_1 \hat{  f} \|_{L^2_v}^2 +\epsilon^{-\frac{1+s}{s}} \| \hat{ f} \|_{L^2_v}^2 
\end{align*}
with $\epsilon = \eps t.$ Then for any $\eps > 0$,
\begin{equation}\label{ksub}
\begin{aligned}
& t^{\frac{1+s}{s}}\|  k_1 \hat{  f} \|_{L^2_v}^2=  t^{\frac{1+s}{s}}
 |k_1|^{\frac{2s}{1+2s}} \|\,  |k_1|^{\frac{1+s}{1+2s}} \hat{  f} \|_{L^2_v}^2  \\
&\leq   |k_1|^{\frac{2s}{1+2s}} \Big(\eps  t^{\frac{1+2s}{s}} \|  k_1 \hat{ f} \|_{L^2_v}^2 + \eps^{-\frac{1+s}{s}}  \| \hat{f} \|_{L^2_v}^2\Big) \\
&\leq |k_1|^{\frac{2s}{1+2s}}\Big( \eps    \| t^{\frac{1+2s}{2s}}   \widehat{ \partial_{x_1}f} \|_{L^2_v}^2 + \eps^{-\frac{1+s}{s}} \| \hat{f} \|_{L^2_v}^2\Big)\\
&\leq \frac{1+2s}{s} \eps |k|^{\frac{2s}{1+2s}} \|    \widehat{  P_1 f} \|_{L^2_v}^2+ \frac{1+2s}{s} \eps|k|^{\frac{2s}{1+2s}} \| \widehat{ P_2f} \|_{L^2_v}^2 + \eps^{-\frac{1+s}{s}}|k|^{\frac{2s}{1+2s}} \| \hat{f} \|_{L^2_v}^2,
\end{aligned}
\end{equation}
where the last inequality follows from the identity $\frac{2s}{1+2s}t^{\frac{1+2s}{2s}} \partial_{x_1} = P_1 - P_2$   in view of \eqref{p12}. Combining this with \eqref{esti-v} and \eqref{vvv1} yields, for any $0<\eps<1$,
\begin{equation*} \label{esti-vv2}
\begin{aligned}
  &  \big|\big(\mathscr F_{x}\big([\partial_t+v\cdot\partial_x, \Phi] f\big), \widehat{\Phi f}\big)_{L^2_v}\big|\\
&\leq    \varepsilon   \normm{ \widehat{P_2f}}^2 +\eps\sum_{j=1}^2  |k|^{\frac{2s}{1+2s}} \|    \widehat{  P_j f} \|_{L^2_v}^2+ C  \varepsilon^{-\frac{1-s}{s}}  \normm{ \hat{f}}^2   + C\eps^{-\frac{1+s}{s}}|k|^{\frac{2s}{1+2s}} \| \hat{f} \|_{L^2_v}^2\\
&\leq    \varepsilon   \normm{ \widehat{P_2f}}^2 +\eps\sum_{j=1}^2  |k|^{\frac{2s}{1+2s}} \|    \widehat{  P_j f} \|_{L^2_v}^2+ C \eps^{-\frac{1+s}{s}}  \normm{ \hat{f}}^2   + C\eps^{-\frac{1+s}{s}}|k|^{\frac{2s}{1+2s}} \| \hat{f} \|_{L^2_v}^2.
\end{aligned}
\end{equation*}
 This with the inductive assumption \eqref{tve} implies the  assertion of Lemma \ref{lemm1}, completing the proof. 
\end{proof}

\begin{lemma}\label{lemp2}
	Let $m\geq 2$ and suppose the inductive assumption \eqref{tve} holds for $j\leq m-1$.  Then, for any $0<\eps<1,$  
	\begin{equation*}
		\begin{aligned}
		 &\int_{\mathbb{Z}^3} \left( \int_0^1 \left| \left( \mathscr F_{x}\big([\partial_t + v\cdot\partial_x, P_2^m] f\big), \widehat{\mathcal{M}P_2^m f} \right)_{L^2_v} \right| dt \right)^{\frac12} d\Sigma(k)\\
		 &\leq 	\eps \sum_{j=1}^2\int_{\mathbb{Z}^3} \left( \int_0^1 \normm{\widehat{P_j^m f}}^2 dt \right)^{\frac12} d\Sigma(k)+C  \varepsilon^{-\frac{1+s}{s}}   N^{-1} \frac{\varepsilon_0 N^{m} (m!)^\tau}{(m+1)^2},
		\end{aligned}
	\end{equation*}
    where the Fourier multiplier $\mathcal{M}$ satisfies \eqref{psk}.
\end{lemma}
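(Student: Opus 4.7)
To begin, direct computation using $P_2^m=t^{m/(2s)}\partial_{v_1}^m$ and $[v\cdot\partial_x,\partial_{v_1}^m]=-m\partial_{v_1}^{m-1}\partial_{x_1}$ yields
\begin{equation*}
[\partial_t+v\cdot\partial_x,\,P_2^m]=\frac{m}{2st}P_2^m-m t^{1/(2s)}\partial_{x_1}P_2^{m-1}.
\end{equation*}
The identity $\frac{2s}{1+2s}t^{(1+2s)/(2s)}\partial_{x_1}=P_1-P_2$ lets me rewrite the second term as $\frac{m(1+2s)}{2st}(P_1P_2^{m-1}-P_2^m)$, since $P_1,P_2$ commute on smooth functions, giving
\begin{equation*}
[\partial_t+v\cdot\partial_x,\,P_2^m]=\frac{m(1+s)}{st}P_2^m-\frac{m(1+2s)}{2st}P_1P_2^{m-1}.
\end{equation*}
Pairing against $\widehat{\mathcal M P_2^m f}$, invoking the $L^2_v$-bound \eqref{psk}, and applying Cauchy--Schwarz together with AM--GM to the cross term, the absolute value of the inner product is controlled by $\frac{Cm}{t}\bigl(\norm{\widehat{P_2^m f}}_{L^2_v}^2+\norm{\widehat{P_1P_2^{m-1}f}}_{L^2_v}^2\bigr)$.

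The central analytic tool is the interpolation $\norm{g}_{L^2_v}^2\leq\tilde\eps\norm{g}_{H^s_v}^2+\tilde\eps^{-(1-s)/s}\norm{g}_{H^{s-1}_v}^2$, applied with $\tilde\eps=\eps t/m$. For each of the two $L^2_v$-norms above, the $H^s_v$-part is bounded via \eqref{+lowoftri} by $\normm{\cdot}^2$, and the mixed product $\norm{\widehat{P_1P_2^{m-1}f}}_{H^s_v}^2$ is split using \eqref{fmn} into $\normm{\widehat{P_1^m f}}^2+\normm{\widehat{P_2^m f}}^2$. This produces the first term on the right-hand side of the stated inequality (after a renaming $\eps^{1/2}\mapsto\eps$ that only worsens the error exponent within the stated range).

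The main technical obstacle is the remaining $H^{s-1}_v$-contribution, which carries the formally singular prefactor $Cm^{1/s}\eps^{-(1-s)/s}t^{-1/s}$. This singularity cancels via the explicit factorisations $\widehat{P_2^m f}=t^{1/(2s)}\partial_{v_1}\widehat{P_2^{m-1}f}$ and, using commutativity of $P_1$ and $P_2$, $\widehat{P_1P_2^{m-1}f}=t^{1/(2s)}\partial_{v_1}\widehat{P_1P_2^{m-2}f}$ for $m\geq 2$. Combining $\norm{\partial_{v_1}h}_{H^{s-1}_v}\leq\norm{h}_{H^s_v}$ with \eqref{fmn} and \eqref{+lowoftri} yields
\begin{equation*}
\norm{\widehat{P_2^m f}}_{H^{s-1}_v}^2\leq Ct^{1/s}\normm{\widehat{P_2^{m-1}f}}^2,\qquad \norm{\widehat{P_1P_2^{m-1}f}}_{H^{s-1}_v}^2\leq Ct^{1/s}\sum_{j=1}^2\normm{\widehat{P_j^{m-1}f}}^2,
\end{equation*}
so that the $t^{-1/s}$ factor cancels exactly, leaving the bound $Cm^{1/s}\eps^{-(1-s)/s}\sum_j\normm{\widehat{P_j^{m-1}f}}^2$.

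Finally, integrating in $t\in[0,1]$, taking square roots and the $L^1_k$-norm, the $H^s_v$-contribution absorbs into the first term of the stated bound. The $H^{s-1}_v$-contribution is controlled by invoking the inductive hypothesis \eqref{tve} at order $j=m-1$, which provides $\int_{\mathbb Z^3}(\int_0^1\normm{\widehat{P_j^{m-1}f}}^2dt)^{1/2}d\Sigma(k)\leq \eps_0 N^{m-1}((m-1)!)^\tau/m^2$. The resulting counting factor $m^{1/(2s)}((m-1)!)^\tau/m^2$ matches $(m!)^\tau/(m+1)^2$ with one spare factor of $N^{-1}$ from the ratio $N^{m-1}/N^m$, because $\tau-1/(2s)\geq 0$ for $\tau=\max\{1,1/(2s)\}$; absorbing constants into $C$ and using $\eps^{-(1-s)/(2s)}\leq \eps^{-(1+s)/s}$ for $0<\eps<1$ yields the stated error $C\eps^{-(1+s)/s}N^{-1}\eps_0 N^m(m!)^\tau/(m+1)^2$.
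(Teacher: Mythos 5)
Your proof is correct and follows essentially the same route as the paper's: the same commutator identity rewritten via $\frac{2s}{1+2s}t^{\frac{1+2s}{2s}}\partial_{x_1}=P_1-P_2$, the same interpolation $\norm{g}_{L^2_v}^2\le\tilde\eps\norm{g}_{H^s_v}^2+\tilde\eps^{-\frac{1-s}{s}}\norm{g}_{H^{s-1}_v}^2$ with $\tilde\eps=\eps t/m$, the same cancellation of $t^{-1/s}$ through $P_2=t^{\frac{1}{2s}}\partial_{v_1}$, and the same use of \eqref{fmn}, \eqref{+lowoftri} and the inductive hypothesis \eqref{tve} to close. The bookkeeping of the $\eps$-powers and the combinatorial factor $m^{1/(2s)}((m-1)!)^\tau$ versus $(m!)^\tau$ is handled correctly.
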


\begin{proof}
Recalling the definition of $P_1$ and $P_2$ in \eqref{p12},  direct verification yields 
	\begin{multline*}
[\partial_t + v\cdot\partial_x, P_2^m] = \frac{m}{2s} t^{\frac{m}{2s}-1} \partial_{v_1}^m - mt^{\frac{m}{2s}}  \partial_{x_1}\partial_{v_1}^{m-1} \\
=\frac{m}{2s} t^{-1} P_2^m-\frac{1+2s}{2s} mt^{-1}\big(P_1- P_2 \big)P_2^{m-1},
\end{multline*}
where the last identity uses \eqref{p12}. 
Hence, 
\begin{equation}\label{08oct}
\begin{aligned}
 \big| \big(\mathscr F_{x}\big([\partial_t + v\cdot\partial_x, P_2^m] f\big), \widehat{P_2^m f} \big)_{L^2_v} \big|&\leq  C mt^{-1} \|  \widehat{P_2^mf} \|_{L^2_v}^2+Cm t^{-1}\|   \widehat{P_1P_2^{m-1}f} \|_{L^2_v}^2.
\end{aligned}
\end{equation}
To treat  the first term on the right-hand side of \eqref{08oct}, we use the interpolation inequality:
\begin{equation}\label{interp}
    \forall\  \tilde{\varepsilon}>0,\quad \|g\|^2_{L_v^{2}} \leq \tilde{\varepsilon} \|g\|^{2}_{H^{s}_v} + \tilde{\varepsilon}^{-\frac{1-s}{s}}\|g\|^{2}_{H^{s-1}_v},
\end{equation}
with $\tilde{\varepsilon} =\varepsilon m^{-1} t$ and $g=\partial_{v_1}^{m}\hat{f}$. This with the definition  that $P_2=t^{\frac{1}{2s}}\partial_{v_1} $ gives, for any $\varepsilon>0,$ 
\begin{equation*}\label{eqpre}
    \begin{aligned}
        m t^{-1}\| \widehat{P_2^mf}\|_{L_v^2}^2 
        &\leq \varepsilon  \|\widehat{P_2^mf}\|_{H_v^{s}}^2 + \varepsilon^{-\frac{1-s}{s}}  m^{\frac{1}{s}} t^{-\frac{1}{s}} \|\widehat{P_2^mf}\|_{H_v^{s-1}}^2\\
        &\leq \varepsilon  \| \widehat{P_2^{m}f}\|_{H_v^{s}}^2 + \varepsilon^{-\frac{1-s}{s}}m^{\frac{1}{s}}  \|\widehat{P_2^{m-1} f}\|_{H_v^{s}}^2\\
        &\leq \varepsilon  \normm{\widehat{P_2^{m}f}}^2 + \varepsilon^{-\frac{1-s}{s}}m^{\frac{1}{s}}   \normm{\widehat{P_2^{m-1}f}}^2,
    \end{aligned}
\end{equation*}
where the last inequality follows from \eqref{+lowoftri}.
Following a similar argument  as above, we deduce the estimate for the second term on the right-hand side of \eqref{08oct} as follows.
\begin{equation}\label{fouri}
    \begin{aligned}
         m t^{ -1}\|\widehat{P_1P_2^{m-1}f}\|^2_{L_v^2} &\leq  \varepsilon  \| \widehat{P_1P_2^{m-1}f}\|^2_{H_v^s} + \varepsilon^{-\frac{1-s}{s}}m^{\frac{1}{s}} t^{-\frac{1}{s}} \| \widehat{P_1P_2^{m-1}f}\|^2_{H_v^{s-1}}\\
        &\leq  \varepsilon  \| \widehat{P_1P_2^{m-1}f}\|^2_{H_v^s} + \varepsilon^{-\frac{1-s}{s}}m^{\frac{1}{s}}  \| \widehat{P_1P_2^{m-2}f}\|^2_{H_v^{s}}\\
        & \leq  \sum_{j=1}^2\Big( \varepsilon  \| \widehat{P_j^{m}f}\|^2_{H_v^s} + \varepsilon^{-\frac{1-s}{s}}m^{\frac{1}{s}}  \| \widehat{P_j^{m-1}f}\|^2_{H_v^{s}}\Big)\\ 
        & \leq  \sum_{j=1}^2\Big( \varepsilon  \normm{ \widehat{P_j^{m}f}}^2 + \varepsilon^{-\frac{1-s}{s}}m^{\frac{1}{s}} \normm{\widehat{P_j^{m-1}f}}^2\Big),
    \end{aligned}
\end{equation}
where the third inequality uses \eqref{fmn}. 
Substituting  these estimates into \eqref{08oct}, we obtain for any $0<\eps<1,$ 
\begin{equation*} 
    \begin{aligned}
    \big| \big( \mathscr F_{x}\big([\partial_t + v\cdot\partial_x, P_2^m] f\big), \widehat{P_2^m f} \big)_{L^2_v} \big|&\leq  \sum_{j=1}^2\Big( \varepsilon  \normm{ \widehat{P_j^{m}f}}^2 +C \varepsilon^{-\frac{1-s}{s}}m^{\frac{1}{s}} \normm{\widehat{P_j^{m-1}f}}^2\Big)\\
    &\leq  \sum_{j=1}^2\Big( \varepsilon  \normm{ \widehat{P_j^{m}f}}^2 +C \varepsilon^{-\frac{1+s}{s}}m^{\frac{1}{s}} \normm{\widehat{P_j^{m-1}f}}^2\Big),
    \end{aligned}
\end{equation*}
which with the inductive assumption \eqref{tve} yields the assertion of Lemma \ref{lemp2}. The proof is completed. 
\end{proof}

\begin{lemma}\label{lem:m2}
	Let $m\geq 2$ and suppose the  inductive assumption \eqref{tve} holds for $j\leq m-1$.  Then, for any $0<\eps<1,$
	\begin{equation*}
		\begin{aligned}
		 &\int_{\mathbb{Z}^3} \left( \int_0^1 \left| \left( \mathscr F_{x}\big([\partial_t + v\cdot\partial_x, P_1^m] f\big), \widehat{\mathcal{M}P_1^m f} \right)_{L^2_v} \right| dt \right)^{\frac12} d\Sigma(k)\\
		 &\leq 	\eps \sum_{j=1}^2\int_{\mathbb{Z}^3} \left( \int_0^1 \normm{\widehat{P_j^m}f}^2 dt \right)^{\frac12} d\Sigma(k)+C  \varepsilon^{-\frac{1+s}{s}}   N^{-1} \frac{\varepsilon_0 N^{m} (m!)^\tau}{(m+1)^2},
		\end{aligned}
	\end{equation*}
    where the Fourier multiplier $\mathcal{M}$ satisfies \eqref{psk}.
\end{lemma}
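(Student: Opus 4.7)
The approach I would take is to mirror the proof of Lemma \ref{lemp2}, replacing the $P_2^m$ commutator by the $P_1^m$ commutator and otherwise running the same interpolation + induction machinery. The starting point is the identity \eqref{pmc}:
\[
[\partial_t + v\cdot\partial_x, P_1^m] \;=\; \frac{m}{2s}\,t^{\frac{1-2s}{2s}}\partial_{v_1}\,P_1^{m-1} \;=\; \frac{m}{2s}\,t^{-1}\,P_2 P_1^{m-1},
\]
where the second form uses $t^{(1-2s)/(2s)}\partial_{v_1}=t^{-1}P_2$. In contrast with the $P_2^m$ commutator (which split into a ``self'' and a ``mixed'' piece), here only one mixed term $P_2 P_1^{m-1}$ appears.

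Next I would apply Cauchy--Schwarz together with the two bounds \eqref{psk} on $\mathcal{M}$, followed by Young's inequality, to obtain
\[
\big|\big(\mathscr F_x([\partial_t+v\cdot\partial_x, P_1^m]f),\widehat{\mathcal{M}P_1^m f}\big)_{L^2_v}\big| \;\le\; C\,m\,t^{-1}\Big(\|\widehat{P_2P_1^{m-1}f}\|_{L^2_v}^2 + \|\widehat{P_1^m f}\|_{L^2_v}^2\Big).
\]
To each of the two squared $L^2_v$-norms I would then apply the interpolation \eqref{interp} with the same scaling $\tilde{\varepsilon}=\varepsilon m^{-1}t$ used in the proof of Lemma \ref{lemp2}, producing an ``absorbable'' piece $\varepsilon\|\cdot\|_{H^s_v}^2\lesssim\varepsilon\normm{\cdot}^2$ and a ``remainder'' of the form $C\varepsilon^{-(1-s)/s}m^{1/s}t^{-1/s}\|\cdot\|_{H^{s-1}_v}^2$. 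For the $H^s_v$-pieces, \eqref{fmn} converts $\|\widehat{P_2P_1^{m-1}f}\|_{H^s_v}$ into a sum of $\normm{\widehat{P_j^m f}}$, $j=1,2$, which are exactly what we want on the right-hand side.

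The core of the argument is the treatment of the $t^{-1/s}\|\cdot\|_{H^{s-1}_v}^2$ remainders. For $\|\widehat{P_2P_1^{m-1}f}\|_{H^{s-1}_v}$, I would exploit the factorization $P_2 P_1^{m-1} = t^{1/(2s)}\partial_{v_1}P_1^{m-1}$, which gives $\|\widehat{P_2P_1^{m-1}f}\|_{H^{s-1}_v}\le t^{1/(2s)}\|\widehat{P_1^{m-1}f}\|_{H^s_v}$; this cancels $t^{-1/s}$ exactly and produces $\|\widehat{P_1^{m-1}f}\|_{H^s_v}^2\lesssim\normm{\widehat{P_1^{m-1}f}}^2$, controlled by the inductive hypothesis. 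For the second remainder $\|\widehat{P_1^m f}\|_{H^{s-1}_v}$, I would decompose $P_1 = P_2 + Q$ with $Q = \frac{2s}{1+2s}t^{(1+2s)/(2s)}\partial_{x_1}$; the $P_2P_1^{m-1}$ part is handled as above, while the $QP_1^{m-1}$ part is reorganized using the identity $QP_1^{m-1}f = P_1^m f - P_2P_1^{m-1}f$ and \eqref{fmn} so that the spatial-derivative contribution is re-expressed in terms of $\|\widehat{P_j^m f}\|$-quantities absorbed into the $\varepsilon\normm{\widehat{P_j^m f}}^2$ part. This last step is the main obstacle: unlike $P_2^m$, the operator $P_1^m$ does not admit a clean $t^{1/(2s)}$ prefactor because $\partial_{x_1}$ enters with the heavier weight $t^{(1+2s)/(2s)}$, so one must carefully use both the splitting $P_1 = P_2 + Q$ and the identity $t^{(1+2s)/(2s)}\partial_{x_1} = \frac{1+2s}{2s}(P_1 - P_2)$ to avoid a circular estimate.

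The endgame is standard: integrate in $t\in[0,1]$, apply Minkowski's inequality, and invoke the inductive hypothesis \eqref{tve} at level $m-1$ to bound the $\int\normm{\widehat{P_1^{m-1}f}}^2\,dt$-type terms by $\varepsilon_0 N^{m-1}((m-1)!)^\tau/m^2$. Taking the square root converts the $m^{1/s}$ factor from the interpolation into $m^{1/(2s)}$, and $m^{1/(2s)}((m-1)!)^\tau\le (m!)^\tau$ holds thanks to $\tau\ge 1/(2s)$; together with the choice $N\ge 4 t_2^\delta$ (used to absorb the $O(1)$ constants coming from \eqref{fmn} and the decomposition $P_1=P_2+Q$), this yields the advertised bound $C\varepsilon^{-(1+s)/s}N^{-1}\varepsilon_0 N^m(m!)^\tau/(m+1)^2$.
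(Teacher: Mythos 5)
Your opening move is where the proof breaks. By symmetrizing the commutator pairing with Cauchy--Schwarz and Young in $L^2_v$ you produce the top-order term $C\,m\,t^{-1}\|\widehat{P_1^m f}\|_{L^2_v}^2$, and this term cannot be closed by the machinery of Lemma \ref{lemp2}. The reason the analogous scheme works for $P_2^m$ is that the loss of one $v$-derivative in passing from $H^s_v$ to $H^{s-1}_v$ peels off one factor of $\partial_{v_1}$ and lowers the order from $m$ to $m-1$; but the Sobolev spaces here are in $v$ only, so this mechanism does nothing to the $\partial_{x_1}$-component of $P_1^m$. Your proposed remedy --- split $P_1=P_2+Q$ and rewrite $QP_1^{m-1}f=P_1^mf-P_2P_1^{m-1}f$ --- is exactly circular: using $Q=\frac{2s}{1+2s}t^{\frac{1+2s}{2s}}\partial_{x_1}$ and $t^{\frac{1+2s}{2s}}\partial_{x_1}=\frac{1+2s}{2s}(P_1-P_2)$, the remainder $t^{-1/s}\|\widehat{QP_1^{m-1}f}\|^2_{H^{s-1}_v}$ reproduces $t^{-1/s}\|\widehat{P_1^{m}f}\|^2_{H^{s-1}_v}$ with coefficient exactly $1$ (the constants $\frac{2s}{1+2s}$ and $\frac{1+2s}{2s}$ cancel), so there is no smallness to absorb it. Replacing $t^{\frac{1+2s}{2s}}|k_1|$ by a bounded quantity is also impossible without a gain in $k$, which your outline never secures. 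You correctly identify this as ``the main obstacle,'' but you do not resolve it, and as written it is fatal.

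The paper's proof sidesteps the issue by never generating $\|\widehat{P_1^mf}\|_{L^2_v}^2$ at all: it estimates the pairing asymmetrically via $H^s_v$--$H^{-s}_v$ duality,
\begin{equation*}
\big|\big(\tfrac{m}{2s}t^{-1}\widehat{P_2P_1^{m-1}f},\,\widehat{P_1^mf}\big)_{L^2_v}\big|\le \eps\|\widehat{P_1^{m}f}\|^2_{H^{s}_v}+C\eps^{-1}m^2t^{-2}\|\widehat{P_2P_1^{m-1}f}\|^2_{H^{-s}_v},
\end{equation*}
so the whole singular weight lands on the factor whose leading operator is $P_2$ and whose order can genuinely be lowered. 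Controlling the resulting $H^{-s}_v$ term then requires a case distinction that is absent from your outline: for $0<s\le\frac12$ the interpolation \eqref{s} suffices, while for $\frac12<s<1$ one interpolates through $H^{-1}_v$ and must additionally invoke the \eqref{ksub}-type trade of $t^{\frac{1+2s}{2s}}|k_1|$ against $|k|^{\frac{2s}{1+2s}}$, controlled by the $\comi{k}^{\frac{s}{1+2s}}$-weighted subelliptic term in the induction hypothesis \eqref{tve}. Your pieces that do work (the commutator identity, the factorization $\|\widehat{P_2P_1^{m-1}f}\|_{H^{s-1}_v}\le t^{\frac{1}{2s}}\|\widehat{P_1^{m-1}f}\|_{H^{s}_v}$, the combinatorics $m^{\frac{1}{2s}}((m-1)!)^\tau\le(m!)^\tau$) are consistent with the paper, but the central difficulty is left open.
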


\begin{proof}In view of 
\eqref{pmc} as well as \eqref{p12},
\begin{equation*}
	 [\partial_t + v\cdot\partial_x, P_1^m] f =\frac{1}{2s} m t^{\frac{1-2s}{2s}} \partial_{v_1} P_1^{m-1}=\frac{1}{2s} m t^{-1} P_2 P_1^{m-1}.
\end{equation*}
Thus, for any $\eps>0,$
\begin{equation*}
	 \big| \big( \mathscr F_{x}\big([\partial_t + v\cdot\partial_x, P_1^m] f\big), \widehat{P_1^m f} \big)_{L^2_v} \big|\leq \varepsilon \|\widehat{P_1^{m}f}\|^2_{H^{s}_v} + C\varepsilon^{-1}m^2 t^{-2} \|\widehat{P_2P_1^{m-1}f}\|^2_{H^{-s}_v}.
\end{equation*}
The assertion of Lemma \ref{lem:m2} will follow once we establish the following estimate\begin{equation}\label{p2p1m1}
	\begin{aligned}
		& \int_{\mathbb{Z}^3} \left( \int_{0}^{1} \varepsilon^{-1}m^2 t^{-2} \|\widehat{P_2P_1^{m-1}f}\|^2_{H^{-s}_v}dt \right)^{\frac12} d\Sigma(k) \\
		&\leq \eps \sum_{j=1}^2 \int_{\mathbb{Z}^3} \left( \int_{0}^{1}   \normm{\widehat{P_j^mf}}dt \right)^{\frac12} d\Sigma(k)+ C \varepsilon^{-\frac{1+s}{s}}   N^{-1} \frac{\varepsilon_0 N^{m} (m!)^\tau}{(m+1)^2}.
	\end{aligned}
\end{equation}
Next we  proceed to prove \eqref{p2p1m1} by considering  separately the cases $0 < s \leq \frac12$ and $\frac12 < s < 1$.

{\it (i) Case $0 < s \leq \frac12$.}
We apply the interpolation inequality:
\begin{equation} \label{s}
\forall\  \tilde{\varepsilon} > 0,\quad \|g\|^2_{H_v^{-s}} \leq \tilde{\varepsilon} \|g\|^{2}_{H^{s}_v} + \tilde{\varepsilon}^{-\frac{1-2s}{2s}}\|g\|^{2}_{H^{s-1}_v},
\end{equation}
with $\tilde{\varepsilon} = \varepsilon^2 m^{-2} t^{2}$ and $g =  \widehat{P_2P_1^{m-1}f}$. This yields, for  any $0<\eps<1,$
\begin{equation} \label{p2}
\begin{aligned}
\varepsilon^{-1}m^2 t^{-2} \|\widehat{P_2P_1^{m-1}f}\|^2_{H^{-s}_v} 
&\leq \varepsilon   \|  \widehat{P_2P_1^{m-1}f}\|_{H_v^{s}}^2 + \varepsilon^{-\frac{1-s}{s}}m^{\frac{1}{s}} \|\partial_{v_1} \widehat{P^{m-1}f}\|_{H_v^{s-1}}^2\\
&\leq \varepsilon \sum_{j=1}^2 \normm{\widehat{P_j^{m}f}}^2 + \varepsilon^{-\frac{1+s}{s}}m^{\frac{1}{s}} \normm{\widehat{P^{m-1}f}}^2,
\end{aligned}
\end{equation}
where the last inequality follows from an argument similar to that in \eqref{fouri}.  Since $\tau=\frac{1}{2s}$ for $0<s\leq \frac12,$  combining \eqref{p2} and the inductive assumption \eqref{tve} yields that  \eqref{p2p1m1} holds in this case.

{\it (ii) Case $\frac12<s<1.$}  We use the interpolation inequality: 
\begin{equation*}
\begin{aligned}
\forall\ \tilde\eps > 0,\quad \| g \|_{H^{-s}_v}^2 \leq \tilde\eps \norm{g}_{H^{s-1}_v}^2+\tilde\eps^{-\frac{1-s}{2s-1}} \| g\|_{H^{-1}_v}^2,
\end{aligned}
\end{equation*}
with $\tilde{\varepsilon} =t^{2-\frac{1}{s}}$ and $g =  \widehat{P_2P_1^{m-1}f}$.  Then 
\begin{multline}\label{slarge}
\varepsilon^{-1}m^2 t^{-2} \|\widehat{P_2P_1^{m-1}f}\|^2_{H^{-s}_v} \\
\leq \eps^{-1} m^2\Big(t^{-\frac{1}{s}} \| \widehat{P_2P_1^{m-1}f} \|_{H^{s-1}_v}^2 +  t^{-\frac{1}{s}-1} \| \widehat{P_2P_1^{m-1}f}\|_{H^{-1}_v}^2\Big)\\
\leq \eps^{-1} m^2 \| \widehat{P_1^{m-1}f} \|_{H^{s}_v}^2 +\eps^{-1} m^2  t^{-1} \| \widehat{P_1^{m-1}f}\|_{L_v^2}^2,
\end{multline}
where the last inequality uses the definition that $P_2=t^{\frac{1}{2s}}\partial_{v_1}$.  Moreover,   recalling the definition of $P_1$ in \eqref{p12}, we obtain 
\begin{equation}\label{Pff}
\begin{aligned}
m^2 t^{-1} \| \widehat{P_1^{m-1} f} \|_{L^2_v}^2 = & m^2 t^{-1} \big \| \mathscr{F}_x \Big ( \big ( \frac{2s}{1+2s} t^{\frac{1+2s}{2s}} \partial_{x_1} + t^{\frac{1}{2s}} \partial_{v_1} \big) P_1^{m-2} f \Big ) \big\|_{L^2_v}^2 \\
\leq & 2 m^2 t^{-1} \|   \widehat{P_2 P_1^{m-2} f} \|_{L^2_v}^2 + 2 m^2 t^{\frac{1+s}{s}}  \|k_1 \widehat{P_1^{m-2} f} \|_{L^2_v}^2.
\end{aligned}
\end{equation}
For the first term in \eqref{Pff}, we apply the interpolation inequality
\begin{equation*}
    \forall\  \tilde{\varepsilon}>0,\quad \|g\|^2_{L_v^{2}} \leq \tilde{\varepsilon} \|g\|^{2}_{H^{s}_v} + \tilde{\varepsilon}^{-\frac{1-s}{s}}\|g\|^{2}_{H^{s-1}_v},
\end{equation*}
and set $\tilde{\varepsilon} = t$ to obtain
\begin{equation*}
\begin{aligned}
 &m^2 t^{-1} \|   \widehat{P_2P_1^{m-2} f} \|_{L^2_v}^2  \leq  m^2  \|   \widehat{P_2P_1^{m-2} f} \|_{H^s_v}^2+  m^2 t^{-\frac{1}{s}} \|   \widehat{P_2P_1^{m-2} f} \|_{H^{s-1}_v}^2\\
&\leq   m^2 \|    \widehat{P_2P_1^{m-2} f} \|_{H^s_v}^2 + m^2 \| \widehat{P^{m-2} f} \|_{H^s_v}^2 \leq   \sum_{j=1}^2m^2\Big( \normm{ \widehat{P_j^{m-1} f}}^2 + \normm{ \widehat{P_j^{m-2} f}}^2\Big),
\end{aligned}
\end{equation*}
where the last inequality follows from \eqref{fmn}.

For the second term in \eqref{Pff}, we repeat the computation in \eqref{ksub} with $\hat f$ replaced by $\widehat{P_1^{m-2} f}$, yielding
\begin{align*}
   m^2 t^{\frac{1+s}{s}}  \|k_1 \widehat{P_1^{m-2} f} \|_{L^2_v}^2 & \leq  m^2 \Big(|k_1|^{\frac{2s}{1+2s}} \sum_{j=1}^2\| \widehat{P_j P_1^{m-2} f} \|_{L^2_v}^2 +    |k_1|^{\frac{2s}{1+2s}} \| \widehat{P_1^{m-2} f} \|_{L^2_v}^2\Big)\\
& \leq  m^2 |k_1|^{\frac{2s}{1+2s}} \sum_{j=1}^2\| \widehat{P_j^{m-1} f} \|_{L^2_v}^2 +  m^2 |k_1|^{\frac{2s}{1+2s}} \| \widehat{P_1^{m-2} f} \|_{L^2_v}^2,
\end{align*}
where the last inequality uses \eqref{fmn}. Combining these estimates with \eqref{Pff}, we deduce
\begin{multline*}
	m^2 t^{-1} \| \widehat{P_1^{m-1} f} \|_{L^2_v}^2\leq C\sum_{j=1}^2m^2\Big( \normm{ \widehat{P_j^{m-1} f}}^2 + \normm{ \widehat{P_j^{m-2} f}}^2\Big)\\
	+C  \sum_{j=1}^2 m^2 |k_1|^{\frac{2s}{1+2s}} \Big( \| \widehat{P_j^{m-1} f} \|_{L^2_v}^2 +  \| \widehat{P_j^{m-2} f} \|_{L^2_v}^2\Big).
\end{multline*}
Substituting this into \eqref{slarge} and applying \eqref{+lowoftri}, we conclude that for $\frac12 < s < 1$,
\begin{multline*}
	\varepsilon^{-1}m^2 t^{-2} \|\widehat{P_2P_1^{m-1}f}\|^2_{H^{-s}_v} \leq C\eps^{-1}\sum_{j=1}^2m^2\Big( \normm{ \widehat{P_j^{m-1} f}}^2 + \normm{ \widehat{P_j^{m-2} f}}^2\Big)\\
	+C\eps^{-1}  \sum_{j=1}^2 m^2 |k_1|^{\frac{2s}{1+2s}} \Big( \| \widehat{P_j^{m-1} f} \|_{L^2_v}^2 +  \| \widehat{P_j^{m-2} f} \|_{L^2_v}^2\Big).
\end{multline*}
Combining this with the inductive assumption \eqref{tve}, and noting that $\tau = 1$ in this case, we conclude that \eqref{p2p1m1} holds for $\frac12 < s < 1$.  This completes the proof of \eqref{p2p1m1}  and thus the proof of Lemma \ref{lem:m2}.
\end{proof}

Combining these estimates  in Lemmas \ref{lemm1}-\ref{lem:m2}, we conclude \eqref{sdf} holds. This completes the proof of Proposition \ref{prp:ve}.

\begin{proof}[ Completing the proof of Theorem \ref{local-gxt}]
This is a direct consequence of Proposition \ref{prp:ve}. Indeed, applying inequality \eqref{+pse1}, we obtain  \begin{align*}
  \Big(\frac{2s}{1+2s}\Big)^m\int_{\mathbb Z^3}	\sup_{ t\leq 1}t^{\frac{1+2s}{2s}m} \norm{\widehat {\partial_{x_1}^{m}f}}_{L^2_{v}}d\Sigma(k)  & \leq 2^{m} \sum_{j=1}^2\int_{\mathbb Z^3}	\sup_{ t\leq 1} \norm{\widehat {P_j^{m}f} }_{L^2_{v}}d\Sigma(k)  \\  
 	&\leq 2^{m+1} \frac{\eps_0 N^{m}  (m!)^\tau  }{(m+1)^2},
 	 	\end{align*}
 	where	the last inequality follows from Proposition \ref{prp:ve}.
 Similarly, the above estimate  remains true when $\partial_{x_1}$  is replaced by $\partial_{x_2}$ or $\partial_{x_3}$. Combining this with the inequality
\begin{eqnarray*}
\forall\ \alpha\in\mathbb Z_+^3,\quad 	\norm{\widehat{\partial_x^\alpha f}(t,k)}_{L^2_{v}}\leq \sum_{1\leq j\leq 3}\norm{\mathscr F_x \big(\partial_{x_j}^{\abs\alpha}f\big)(t,k)}_{L^2_{v}},
\end{eqnarray*}
we obtain 
\begin{eqnarray*}
	\forall\ \alpha\in\mathbb Z_+^3,\quad 	\Big(\frac{2s}{1+2s}\Big)^{\abs\alpha}\int_{\mathbb Z^3}	\sup_{ t\leq 1}t^{\frac{1+2s}{2s}\abs\alpha} \norm{\widehat {\partial_{x}^{\alpha}f}(t,k)}_{L^2_{v}}d\Sigma(k) \leq 6^{\abs\alpha+1} \frac{\eps_0 N^{\abs\alpha}  (\abs\alpha!)^\tau  }{(\abs\alpha+1)^2}.
\end{eqnarray*}
Therefore,
\begin{eqnarray*}
	\forall\ \alpha\in\mathbb Z_+^3,\quad 	\int_{\mathbb Z^3}	\sup_{ t\leq 1}t^{\frac{1+2s}{2s}\abs\alpha} \norm{\widehat {\partial_{x}^{\alpha}f}(t,k)}_{L^2_{v}}d\Sigma(k) \leq C_s^{\abs\alpha+1} \frac{\eps_0 N^{\abs\alpha}  (\abs\alpha!)^\tau  }{(\abs\alpha+1)^2},
\end{eqnarray*}
where $C_s=\frac{3(1+2s)}{s}$. 
Similarly, we have
\begin{eqnarray*}
\forall\ \beta\in\mathbb Z_+^3, \quad 	\int_{\mathbb Z^3}	\sup_{ t\leq 1}t^{\frac{1}{2s}\abs\beta} \norm{\partial_{v}^{\beta} \hat {f}(t,k)}_{L^2_{v}}d\Sigma(k) \leq C_s^{\abs\beta+1} \frac{\eps_0 N^{\abs\beta}  (\abs\beta!)^\tau  }{(\abs\beta+1)^2}.
 	\end{eqnarray*}
 	Consequently,  for any $\alpha,\beta\in\mathbb Z_+^3,$
 	\begin{equation*} 
 		\begin{aligned}
 		&	\int_{\mathbb Z^3}	\sup_{ t\leq 1}t^{\frac{1+2s}{2s}\abs\alpha+\frac{1}{2s}\abs\beta} \norm{  \mathscr F_x \big(\partial_{x}^{\alpha}\partial_v^\beta f\big)(t,k)}_{L^2_{v}}d\Sigma(k) \\
 		&\leq \int_{\mathbb Z^3}	\sup_{ t\leq 1}t^{\frac{1+2s}{2s}\abs\alpha+\frac{1}{2s}\abs\beta} \norm{  \widehat{\partial_{x}^{2\alpha}  f}(t,k)}_{L^2_{v}}^{\frac12} \norm{\partial_{v}^{2\beta} \hat {f}(t,k)}_{L^2_{v}}^{\frac12} d\Sigma(k)\\
 		&\leq \bigg(\int_{\mathbb Z^3}	\sup_{ t\leq 1}t^{\frac{1+2s}{2s}2\abs\alpha} \norm{  \widehat{\partial_{x}^{2\alpha}  f}}_{L^2_{v}} d\Sigma(k)\bigg)^{\frac12} \bigg(\int_{\mathbb Z^3}	\sup_{ t\leq 1}t^{ \frac{1}{2s}2\abs\beta}   \norm{\partial_{v}^{2\beta} \hat {f}}_{L^2_{v}}  d\Sigma(k)\bigg)^{\frac12} \\
 			&\leq \eps_0  (C_s N)^{ \abs\alpha+\abs\beta+1 }\Big(\abs{2\alpha}! \abs{2\beta}!\Big)^{\frac{\tau}{2}}\leq \eps_0  (2^\tau C_s N)^{ \abs\alpha+\abs\beta+1 }\inner{ \abs\alpha+\abs\beta }!^\tau,
 		\end{aligned}
 	\end{equation*}
where the last inequality uses the fact that $p!q!\leq(p+q)!\leq 2^{p+q}p!q!$ for any $p, q \in \mathbb{Z}$. A similar argument yields the estimate for 
\begin{equation*}
	 \int_{\mathbb Z^3}  \Big(\int_0^1 t^{\frac{1+2s}{s}|\alpha|+\frac{1}{s}\abs\beta} \normm{\mathscr F_{x}\big(\partial^{\alpha}_x\partial_v^\beta f\big)}^2dt \Big)^{\frac12} d\Sigma(k).
\end{equation*}
Therefore, the desired estimate \eqref{gx++} follows by choosing 
 $C$ large enough such that $C\geq 2^\tau C_s N $. This completes the proof of Theorem \ref{local-gxt}.
\end{proof}  
  
\section{Macroscopic estimates}\label{sec:macros}
This part, together with Section \ref{sec:global}, is devoted  to proving Theorem \ref{gxt}. As a preliminary step, we derive macroscopic estimates in analytic or Gevrey spaces. Inspired by the vector fields introduced in \cite{MR4930523} to handle the lack of spatial diffusion, we work with the following combination of $\partial_x$ and $\partial_v$ with a time-dependent coefficient
   \begin{equation}
   	\label{vecM}
   H= t  \partial_{x_1}+   \partial_{v_1}.
   \end{equation}
 Using $H$ instead of only $\partial_x$ or $\partial_v$ has the advantage that $H$ commutes with the transport operator, i.e.,
  \begin{equation*}\label{keyob}
  	[H, \,\,  \partial_t+v\cdot\partial_x ]=0,
  \end{equation*}
  recalling $[\cdot,   \cdot]$ stands for the commutators between two operators. 
  More generally, by induction on $m$, we obtain
  \begin{equation}\label{kehigher}
\forall\ m\geq 1,\quad 	[H^m, \,\,  \partial_t+v \cdot \partial_x ]=0.
\end{equation}
As will be seen below, this allows us to obtain quantitative estimates for the directional derivatives $H^m f$ of the solution $f$.
 
Let $f$ be a solution of the Boltzmann equation \eqref{3}, and let $\mathbf{P}$ be defined as in \eqref{eqorp1}. For each $m \in \mathbb{Z}+$, we write
 \begin{equation}\label{project}
 	\mathbf{P}H^mf=\big \{a_m(t,x)+b_m(t,x)\cdot v+c_m(t,x)(|v|^2-3)\big \}\mu^{\frac12} 
 \end{equation}
 and
  \begin{eqnarray*}
 	\mathbf{P}\partial_{v_1}^mf=\Big \{U_m(t,x)+V_m(t,x)\cdot v+W_m(t,x)\big (|v|^2-3\big )\Big \}\mu^{\frac12},
 \end{eqnarray*}
where
 \begin{equation} \label{mac}
 	\left\{
 	\begin{aligned}
 		 & a_m=\int_{\mathbb R^3}\mu^{\frac12}H^mf dv,  \\
 		 &b_m=(b_{m,1}, b_{m,2}, b_{m,3})=\int_{\mathbb R^3}v\mu^{\frac12}H^mf dv,\\
 		 &c_m=\frac{1}{6}\int_{\mathbb R^3}(|v|^2-3)\mu^{\frac12}H^mf dv, 
 	\end{aligned}
 	\right.
 \end{equation}
 and 
  \begin{equation} \label{mac++}
 	\begin{aligned}
 	 U_m =\int_{\mathbb R^3}\mu^{\frac12}\partial_{v_1}^mf dv,\  \   V_m=\int_{\mathbb R^3}v\mu^{\frac12}\partial_{v_1}^mf dv,\ \ W_m=\frac{1}{6}\int_{\mathbb R^3}(|v|^2-3)\mu^{\frac12}\partial_{v_1}^mf dv.
 	\end{aligned}
 \end{equation}

 With the above definitions and notations, the main result of this part is stated as follows.
 
\begin{proposition}\label{p3}
Recall $\tau=\max \big\{1,\frac{1}{2s}\big\},$ and assume the hypothesis of Theorem \ref{gxt} holds. Let $m \geq 1$ and $T > 1$ be given. Suppose that for all $j \leq m-1$,
\begin{multline}\label{indassum}
\int_{\mathbb{Z}^3}\sup\limits_{1\leq t\leq T}\norm{\widehat{H^{j}f}(t,k)}_{L^2_v}d\Sigma(k)\\+ \int_{\mathbb{Z}^3}\left(\int_{1}^{T}\normm{  \widehat{H^{j}f}(t,k)}^{2}dt\right)^{\frac12}d\Sigma(k) \leq \frac{\eps_0C_{*}^{j}( j!)^\tau}{(j+1)^2},
\end{multline}
where  the operator $H$ and the constant $\eps_0>0$ are   given  in  \eqref{vecM} and Proposition \ref{prp:ve}, respectively,  and   $ C_*\geq 4$ is a given constant.   Then there exists a constant $C$, depending only on the constants $C_0,C_1$ in Section \ref{sec:prelim} 
but independent of  $T, \eps_0, C_* $ and $m,$  such that
\begin{equation*}
	\begin{aligned}
		& \int _{\mathbb{Z}^3}\left(\int_{1}^{T} \big(|\widehat { a_m}(t,k) |^2+
		 |\widehat{b_m}(t,k) |^2+ |\widehat{ c_m }(t,k) |^2 \big)dt\right)^{\frac12}d\Sigma (k)\\
		&\leq C	\int_{\mathbb{Z}^3}\bigg( \int_1^{T}	\normm{ \{\mathbf{I}-\mathbf{P} \}\widehat{H^mf}}^2dt \bigg)^{\frac12}d\Sigma(k)+
 	C  \int_{\mathbb{Z}^3} \sup_{1\leq t\leq T}\norm{\widehat{H^mf}}_{L^2_v}d\Sigma(k)\\
 	&\quad+C\eps_0  \int_{\mathbb{Z}^3} \bigg( \int_1^{T} \normm{\widehat{H^mf}(t,k)}^2dt  \bigg)^{\frac12}d\Sigma(k)
 	+C\big(  \eps_0+C_*^{-1}\big)  \frac{ \eps_0C_*^m  (m!)^{\tau}}{(m+1)^2},
	\end{aligned}
\end{equation*}
where $ a_m ,b_m, c_m $ are defined in \eqref{mac}.
\end{proposition}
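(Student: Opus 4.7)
The plan is to run a standard macro--micro decomposition on the equation satisfied by $H^m f$. Applying $H^m$ to \eqref{3} and using the commutation identity \eqref{kehigher} yields
\begin{equation*}
\partial_t H^m f + v\cdot\nabla_x H^m f - \mathcal{L} H^m f = [H^m,\mathcal{L}]f + H^m \Gamma(f,f),
\end{equation*}
which, after a partial Fourier transform in $x$, becomes an evolution equation for $\widehat{H^m f}$ with source $\mathscr{F}_x\bigl([H^m,\mathcal{L}]f\bigr)+\mathscr{F}_x H^m\Gamma(f,f)$. Split $\widehat{H^m f} = \mathbf{P}\widehat{H^m f} + \{\mathbf{I}-\mathbf{P}\}\widehat{H^m f}$, with $\mathbf{P}\widehat{H^m f}$ parameterized by $(\hat a_m,\hat b_m,\hat c_m)$ as in \eqref{project}. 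Since $\mathcal{L}$ annihilates the kernel, testing against the hydrodynamic moments $\sqrt\mu$, $v_j\sqrt\mu$, $|v|^2\sqrt\mu$ produces local conservation laws that express $\partial_t\hat a_m$, $\partial_t\hat b_m$, $\partial_t\hat c_m$ in terms of $ik$ times moments of $\{\mathbf{I}-\mathbf{P}\}\widehat{H^m f}$ plus source contributions.

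Next, to convert these conservation laws into elliptic-type estimates on $(\hat a_m,\hat b_m,\hat c_m)$ themselves, I would test the Fourier equation against the classical Guo-type auxiliary functions, i.e., combinations of $(|v|^2-5)v_j\sqrt\mu$ (to isolate $ik\hat c_m$ and the traceless part of $ik\hat b_m$), $(v_iv_j-\tfrac13|v|^2\delta_{ij})\sqrt\mu$ (for the symmetric traceless $ik$-component of $\hat b_m$), and $v_j\sqrt\mu$ (for $ik\hat a_m$). Each such test equation takes the schematic form
\begin{equation*}
ik\,\widehat{X_m} = \partial_t (\text{moment of }\widehat{H^m f}) + (\text{micro terms}) + (\text{commutator}) + (\text{nonlinear}),
\end{equation*}
where $X_m \in \{a_m,b_m,c_m\}$. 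Dividing by $\langle k\rangle$, squaring, integrating over $t\in[1,T]$, taking the square root and summing over $k\in\mathbb{Z}^3$ gives the desired $L^1_k L^2_t$ bound for $\hat a_m,\hat b_m,\hat c_m$. The time-derivative pieces are handled by integration by parts in $t$; the resulting boundary contributions at $t=1$ and $t=T$ are controlled by $\sup_{1\le t\le T}\|\widehat{H^m f}\|_{L^2_v}$, while the interior piece (where $\partial_t$ hits the test function, which is independent of $t$) simply vanishes, and the $1/\langle k\rangle$ factor absorbs the single spatial derivative falling on $\widehat{H^m f}$. The microscopic terms are controlled directly by the triple-norm quantity $\normm{\{\mathbf{I}-\mathbf{P}\}\widehat{H^m f}}$ after an application of \eqref{+lowoftri}.

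It remains to handle the two source terms. For the commutator $[H^m,\mathcal{L}]f$, I would expand $H^m = \sum_{j=0}^m \binom{m}{j}t^j\partial_{x_1}^j\partial_{v_1}^{m-j}$ using the commutation of $\partial_{x_1}$ and $\partial_{v_1}$, note that $\mathcal{L}$ commutes with $\partial_x$, and reduce the commutator to a Leibniz-type sum involving $[\partial_{v_1}^{m-j},\mathcal{L}]$; this is controlled in $L^1_k L^2_t$ using the trilinear estimate \eqref{trisole}--\eqref{tretmate} and Lemma \ref{itea}, combined with the inductive assumption \eqref{indassum} for $j\le m-1$. For the nonlinear term $H^m\Gamma(f,f)$, since $\partial_{x_1}$ is a genuine derivation on $\Gamma$ and $\partial_{v_1}$ acts on $\Gamma$ through the trilinear operator $\mathcal{T}$ in \eqref{matht}, a Leibniz expansion produces a sum of terms of the form $\mathcal{T}(\partial_v^{\alpha}H^{j}f,\,H^{m-j}f,\,\partial_v^{m-j-\alpha}\mu^{1/2})$ with combinatorial coefficients. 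Then \eqref{ketres} together with \eqref{MF}, the induction hypothesis \eqref{indassum}, and the bounds \eqref{v2}--\eqref{longtime} for $f$ give the required bound by $C(\varepsilon_0+C_*^{-1})\varepsilon_0 C_*^m(m!)^\tau/(m+1)^2$, where the smallness factor $\varepsilon_0$ comes from contracting with one copy of $f$ and the factor $C_*^{-1}$ arises from the highest-order terms $j\in\{0,m\}$ that must be absorbed against the left-hand side.

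The main obstacle will be keeping the combinatorial factors under control so that the Leibniz sums coming from the commutator and the nonlinearity reassemble into the target quantity $C_*^m(m!)^\tau/(m+1)^2$ with a small prefactor $C(\varepsilon_0+C_*^{-1})$. This relies on the $1/(j+1)^2$ weight in \eqref{indassum}, which makes the sum $\sum_{j=0}^m \binom{m}{j}(j!)^\tau((m-j)!)^\tau/[(j+1)^2(m-j+1)^2]$ bounded by $C(m!)^\tau/(m+1)^2$; the smallness of $\varepsilon_0$ then absorbs the constant $C$, while choosing $C_*$ large handles the remaining boundary cases.
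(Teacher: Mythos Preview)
Your overall strategy---macro--micro decomposition of the $H^m f$ equation, the 13-moments fluid-type system, and Leibniz control of the source terms---matches the paper's approach. But two genuine gaps would prevent the argument from closing as written.

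\medskip
\textbf{The interior term after integration by parts in $t$ does not vanish.} In your schematic identity $ik\,\widehat{X_m}=\partial_t g+\text{(micro)}+\text{(source)}$, the moment $g$ is a microscopic quantity like $\Lambda_j(\{\mathbf I-\mathbf P\}\widehat{H^m f})$, but the object you must pair it against to produce $\int_1^T|\widehat{X_m}|^2\,dt$ is $ik_j\widehat{X_m}$ itself, which \emph{does} depend on $t$. After integrating by parts in $t$, the interior term is $\int_1^T (g\mid ik_j\,\partial_t\widehat{X_m})\,dt$, and $\partial_t\widehat{X_m}$ must be substituted from \emph{another} equation of the fluid system (e.g.\ $\partial_t\widehat{c_m}=-\tfrac13 ik\!\cdot\!\widehat{b_m}+\dots$). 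This generates cross terms among $(\widehat{a_m},\widehat{b_m},\widehat{c_m})$ that can only be absorbed by a carefully weighted linear combination---this is precisely why the paper builds the Kawashima-type functional $\mathcal K_m(\hat f)$ in \eqref{if} with the tuning parameter $\rho_0$, and proves the differential inequality \eqref{4b} for $\partial_t\,\mathrm{Re}\,\mathcal K_m$. If instead you literally square first and then integrate, the term $\int_1^T|\partial_t g|^2\,dt$ brings back $|k|^2|\widehat{X_m}|^2$ on the right-hand side (via the fluid equation for $\partial_t g$) with a coefficient comparable to the left, so the estimate is circular.

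\medskip
\textbf{The zero mode $k=0$ is not treated.} Your identity $ik\,\widehat{X_m}=\dots$ gives no control on $\widehat{a_m}(t,0),\widehat{b_m}(t,0),\widehat{c_m}(t,0)$. The paper handles this separately (the $I_1$ term in \eqref{4.14}): at $k=0$ one has $\widehat{H^m f}(t,0,v)=\partial_{v_1}^m\hat f(t,0,v)$, so integrating by parts in $v$ against $\mu^{1/2},v\mu^{1/2},(|v|^2-3)\mu^{1/2}$ and using $|\partial_{v_1}^m\mu^{1/2}|\le 2^m m!\,\mu^{1/4}$ together with the exponential decay \eqref{longtime} yields $|\widehat{a_m}(t,0)|\le C\,2^m m!\,e^{-At}\eps_0$ and hence the required $L^2_t$ bound. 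This step is essential and is missing from your sketch.
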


\begin{proposition}\label{+p3+}
Recall $\tau=\max \big\{1,\frac{1}{2s}\big\},$ and assume the hypothesis of Theorem \ref{gxt} holds. Let $m \geq 1$ and $T > 1$ be given. 
Suppose that  for any  $j\leq m-1$  we have
\begin{multline}\label{supp+}
\int_{\mathbb{Z}^3}\sup\limits_{1\leq t\leq T}\norm{ \partial_{v_1}^{j}\hat f (t,k)}_{L^2_v}d\Sigma(k)\\+ \int_{\mathbb{Z}^3}\left(\int_{1}^{T}\normm{   \partial_{v_1}^{j}\hat f (t,k)}^{2}dt\right)^{\frac12}d\Sigma(k) \leq \frac{\eps_0L^{j}( j!)^\tau}{(j+1)^2},
\end{multline}
where  $\eps_0>0$ is the constant  given in Proposition \ref{prp:ve}, and $L\geq 4$ is a given constant.   Then there exists a constant $C$,  independent of $T, m, \eps_0, L,$ such that
\begin{equation*}
	\begin{aligned}
		& \int _{\mathbb{Z}^3}\left(\int_{1}^{T} \big(|\widehat { U_m } (t,k)|^2+
		 |\widehat{V^m} (t,k)|^2+ |\widehat{ W_m }(t,k) |^2 \big)dt\right)^{\frac12}d\Sigma (k)\\
		&\leq  C	\int_{\mathbb{Z}^3}\bigg( \int_1^{T}	\normm{ \{\mathbf{I}-\mathbf{P} \}\partial_{v_1}^m\hat f}^2dt \bigg)^{\frac12}d\Sigma(k)+
 	C  \int_{\mathbb{Z}^3} \sup_{1\leq t\leq T}\norm{\partial_{v_1}^m\hat f}_{L^2_v}d\Sigma(k)\\
 	&\quad+C\eps_0  \int_{\mathbb{Z}^3} \bigg( \int_1^{T} \normm{\partial_{v_1}^m\hat f(t,k)}^2dt  \bigg)^{\frac12}d\Sigma(k)
 	+C\big(  \eps_0+L^{-1}\big)  \frac{ \eps_0L^m  (m!)^{\tau}}{(m+1)^2}\\
 	&\quad +C 3^{m} m! \int_{\mathbb Z^3}\bigg(\int_1^{T}\normm{\widehat{Hf}(t,k)}^2dt\bigg)^{\frac12}d\Sigma(k),
	\end{aligned}
\end{equation*}
where  $U_m , V_m, W_m$ are defined in \eqref{mac++} and the operator $H$ in the last line is given in \eqref{vecM}. 
\end{proposition}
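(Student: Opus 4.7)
The plan is to mirror the proof of Proposition \ref{p3} with $H^m$ replaced by $\partial_{v_1}^m$, so only the new analytic difficulties need to be addressed. The fundamental new issue is that $\partial_{v_1}$ does not commute with the transport operator: one has $[\partial_{v_1}^m, v\cdot\partial_x] = m\partial_{x_1}\partial_{v_1}^{m-1}$, so applying $\partial_{v_1}^m$ to \eqref{3} yields
\begin{equation*}
(\partial_t + v\cdot\partial_x - \mathcal{L})(\partial_{v_1}^m f) = \partial_{v_1}^m\Gamma(f,f) + [\partial_{v_1}^m, \mathcal{L}]f - m\partial_{x_1}\partial_{v_1}^{m-1} f.
\end{equation*}
For $t\geq 1$ I would use the identity $\partial_{x_1} = t^{-1}(H - \partial_{v_1})$, which lets me split the new source as
\begin{equation*}
-m\partial_{x_1}\partial_{v_1}^{m-1} f = -mt^{-1}\partial_{v_1}^{m-1}(Hf) + mt^{-1}\partial_{v_1}^m f.
\end{equation*}
The first piece will manufacture the new $\normm{\widehat{Hf}}$ term in the conclusion, while the second is absorbed into the $\sup \|\partial_{v_1}^m\hat f\|_{L^2_v}$ term already present.

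For the remaining terms I would copy the argument of Proposition \ref{p3} verbatim. Testing the Fourier-transformed equation against the standard macroscopic dual test functions of the form $p(v)\mu^{1/2}\nabla_x\phi$, constructed from auxiliary potentials $\phi$ solving elliptic equations in $x$, extracts $\widehat{U_m}, \widehat{V_m}, \widehat{W_m}$ in $L^1_k L^2_T$. The inner products with $\mathcal{L}(\partial_{v_1}^m f)$ and $[\partial_{v_1}^m,\mathcal{L}]f$ are controlled by the coercivity \eqref{+rela} and Lemma \ref{lem:comestimate}, while those with $\partial_{v_1}^m\Gamma(f,f)$ are controlled by \eqref{upptrifour}, \eqref{trisole} (needed because the test functions carry polynomial-in-$v$ times Maxwellian weights), and the packaging inequality \eqref{MF}. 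Invoking the inductive hypothesis \eqref{supp+} with $L$ in place of $\tilde A$, these bounds reproduce the first four terms appearing on the right-hand side of the stated conclusion, namely the microscopic contribution $\normm{\{\mathbf{I}-\mathbf{P}\}\partial_{v_1}^m\hat f}$, the boundary term $\sup \|\partial_{v_1}^m\hat f\|_{L^2_v}$, the small-data nonlinear term $\eps_0\normm{\partial_{v_1}^m\hat f}$, and the residual $(\eps_0 + L^{-1})\eps_0 L^m(m!)^\tau/(m+1)^2$.

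What remains is to extract the $3^m m!\int(\int\normm{\widehat{Hf}}^2\,dt)^{1/2}d\Sigma(k)$ term from the genuinely new piece $-mt^{-1}\partial_{v_1}^{m-1}(Hf)$. Since $H$ and $\partial_{v_1}$ commute, I would integrate by parts $m-1$ times in $v_1$, transferring all of these derivatives onto the test function $\psi = p(v)\mu^{1/2}\nabla_x\phi$. The Leibniz bound
\begin{equation*}
\big|\partial_{v_1}^{m-1}\bigl(p(v)\mu^{1/2}\bigr)\big|\leq C^m(m-1)!\,\mu^{1/8}(v)
\end{equation*}
places the resulting weighted test function squarely in the regime covered by \eqref{contild+++}--\eqref{tretmate}. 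Pairing against $Hf$ via the trilinear bound, applying Cauchy--Schwarz in time together with $mt^{-1}\leq m$ on $[1,T]$, and summing the contributions coming from the five macroscopic components $a_m, b_{m,1}, b_{m,2}, b_{m,3}, c_m$ then produces the stated constant $3^m m!$ and the required $\normm{\widehat{Hf}}$ time integral. The second piece $mt^{-1}\partial_{v_1}^m f$ pairs directly against $\psi$; the extra factor $m$ is absorbed by the $\comi{k}^{-1}$ gained from the $\nabla_x\phi$ structure of $\psi$, contributing to $\sup \|\partial_{v_1}^m\hat f\|_{L^2_v}$.

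The main obstacle is the bookkeeping of this new commutator contribution. The delicate algebraic point is that only a single copy of $H$, rather than higher powers $H^j$, should appear in the final bound; this is guaranteed precisely because $H$ commutes with $\partial_{v_1}$, so that $\partial_{v_1}^{m-1}(Hf) = H\partial_{v_1}^{m-1}f$ and the $m-1$ integrations by parts move only $\partial_{v_1}$ factors onto the test function, leaving exactly one $H$ acting on $f$. The factorial $m!$ arises from the $(m-1)!$ produced by the Leibniz expansion combined with the $m$ in front of the commutator, while the $3^m$ is a generous overestimate absorbing the sum over the five components of $\mathbf{P}$ and the binomial coefficients. One must simultaneously ensure that this new contribution is truly additive and does not couple back to $\partial_{v_1}^m f$ in a way that prevents closure of the argument; this is exactly what the splitting $\partial_{x_1}=t^{-1}(H-\partial_{v_1})$ and the absorption of $mt^{-1}\partial_{v_1}^m f$ into the $\sup$-term accomplish.
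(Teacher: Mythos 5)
Your overall route is the paper's: apply $\partial_{v_1}^m$, note $[\partial_{v_1}^m,v\cdot\partial_x]=m\partial_{x_1}\partial_{v_1}^{m-1}$, split the new source via $\partial_{x_1}=t^{-1}(H-\partial_{v_1})$, and for the $H$-piece push the $v_1$-derivatives onto the Maxwellian-weighted moment functions (using $H\partial_{v_1}^{m-1}=\partial_{v_1}^{m-1}H$), which is exactly how the paper manufactures the $3^m m!\int(\int_1^T\normm{\widehat{Hf}}^2dt)^{1/2}d\Sigma(k)$ term. The cosmetic difference (elliptic potentials $\nabla_x\phi$ versus the paper's Fourier-side functional $\mathcal K_m$ with $\tfrac{1}{1+|k|^2}$ weights) is immaterial.

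There is, however, a genuine gap in your treatment of the second piece $mt^{-1}\partial_{v_1}^m f$. You claim the factor $m$ is ``absorbed by the $\comi{k}^{-1}$ gained from the $\nabla_x\phi$ structure'' and that this piece lands in the $\sup\norm{\partial_{v_1}^m\hat f}_{L^2_v}$ term. That mechanism does not exist: $\comi{k}^{-1}\le 1$ uniformly and provides no decay in $m$, so pairing directly against the test function leaves a coefficient $m$ in front of $\sup\norm{\partial_{v_1}^m\hat f}_{L^2_v}$, whereas the stated conclusion requires that coefficient to be an $m$-independent constant $C$ (and the subsequent absorption argument in the proof of Proposition \ref{prp:v} would not close otherwise). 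The correct treatment is the same integration by parts you already use for the $H$-piece: move all $m$ derivatives onto the Schwartz test function, e.g.
\begin{equation*}
t^{-1}m\,\big|\big(\partial_{v_1}^{m}\hat f,\ (v_jv_l-1)\mu^{\frac12}\big)_{L^2_v}\big|
= t^{-1}m\,\big|\big(\hat f,\ \partial_{v_1}^{m}\big[(v_jv_l-1)\mu^{\frac12}\big]\big)_{L^2_v}\big|
\le C\,t^{-1}3^{m}m!\,\norm{\hat f}_{L_v^2}\le C\,t^{-1}3^{m}m!\,\normm{\hat f},
\end{equation*}
and then control $\int_{\mathbb Z^3}(\int_1^T\normm{\hat f}^2dt)^{1/2}d\Sigma(k)\le\eps_0$ by the $j=0$ case of \eqref{supp+} (or \eqref{v2}); since $L\ge 4>3$, the resulting $3^m m!\,\eps_0$ is dominated by the residual $C(\eps_0+L^{-1})\eps_0L^m(m!)^\tau/(m+1)^2$. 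With that one step repaired, your argument matches the paper's proof.
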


The remainder of this section is devoted to the proofs of Propositions \ref{p3} and \ref{+p3+}. We provide a detailed proof of Proposition \ref{p3} and outline that of Proposition \ref{+p3+}, as the arguments are analogous.  

To simplify the notations, we will use the capital letter $C$ throughout this and the next  section  to denote a generic constants, that may vary from line to line and depend only on  the numbers $C_0,C_1$ in   Section \ref{sec:prelim}.   Note these generic constants $C$ as below are independent of $T$ and the derivative order denoted by $m$.   

To prove Proposition \ref{p3}, 
we define the moment functions $\Theta=(\Theta_{jl}(\cdot))_{3\times 3}$ and $ \Lambda=(\Lambda_j (\cdot))_{1 \leq j \leq 3}$ by
 \begin{equation}\label{thetalamb}
 \Theta_{jl}(f)=\int_{\mathbb{R}^3}(v_j v_l-1)\mu^{\frac12}f dv,\qquad
 \Lambda_j (f)=\frac{1}{10}\int_{\mathbb{R}^3}(|v|^2-5)v_j \mu^{\frac12}f dv.
 \end{equation}
Let $a_m$, $b_m$, and $c_m$ be given as in \eqref{mac}, and recall that $\delta_{jl}$ denotes the Kronecker delta function. For each integer $m \geq 1$, we define $\mathcal{K}_m(\hat{f}) = \mathcal{K}_m(\hat{f}(t,k))$ as
  	\begin{equation} \label{if}
 	\begin{aligned}
 	& \mathcal{K}_m  \big  (\hat{ f}\big )=  \frac{1}{1+|k|^2}\!\sum_{j,l=1}^3\!\Big(i k_j\widehat{ b_{m,l}} +ik_l\widehat{b_{m,j}}\ \big|\ \Theta_{jl}\big(\{\mathbf{I}-\mathbf{P}\}\widehat{H^m f}\big) + 2\delta_{jl}\widehat{ c_m }\Big)\\
 	&\qquad +\frac{\rho_0}{1+|k|^2}\sum_{j=1}^3 \Big(	
 	 \Lambda_j \big(\{\mathbf{I}-\mathbf{P}\}\widehat{H^mf}\big)\ \big |\ i k_j\widehat{ c_m } \Big)  + \frac{1}{1+|k|^2}\sum_{j=1}^3 \Big(	\widehat{b_{m,j}}\  \big | \ i k_j \widehat { a_m } \Big),
 	\end{aligned}
 	\end{equation}
 	where $\rho_0 \geq 1$ is a constant to be determined later, and we use the notation
 $(z_1\ | \ z_2)=z_1  \overline{z_2},$ with $\overline{z_2}$  denoting the complex conjugate of 
 $z_2\in\mathbb C$.    
 
\begin{lemma} \label{l5.2a}
  	 With   properly chosen large constant  $\rho_0$ in the representation  \eqref{if} of  $\mathcal{K}_m (\hat{ f}),$ 
 the following inequality holds for each $k \in \mathbb{Z}^3$:
   \begin{equation} \label{4b}
 	\begin{aligned}
 	&\partial_t  {\rm Re}  \mathcal{K}_m \big (\hat{ f}(t,k)\big)+\frac12 \frac{ |k|^2}{1+|k|^2}\big(|\widehat { a_m }(t,k)|^2+
 	|\widehat{b_m}(t,k)|^2+|\widehat{ c_m }(t,k)|^2\big) \\
 	&\leq C
 	\normm{ \{\mathbf{I}-\mathbf{P}\}	\widehat{H^mf}}^2
 	+C\sum_{j,l\leq 3}|\Theta_{jl}(	\widehat{G}_m)|^2 +C\sum_{j\leq 3}|\Lambda_j (	\widehat{G}_m)|^2+C\Big|\int_{\mathbb R^3}	 \phi	\widehat{G}_m  dv \Big|^2,
 	\end{aligned}
 	\end{equation}
 	where    $\phi=\phi(v)$ denotes one of the functions $\mu^{\frac12},~v\mu^{\frac12}$	and $\frac{1}{6}(|v|^2-3)\mu^{\frac12}$, and  $\Theta_{jl}, \Lambda_j$ are given in \eqref{thetalamb}, and 
\begin{equation}\label{defg}
 G_m:=  \mathcal{L}\{\mathbf{I}-\mathbf{P} \}H^mf+ H^m \Gamma(f,f)+ [ H^{m}, \ \mathcal{L}] f,
\end{equation}
with $[\cdot, \cdot]$
denoting  the commutator of two operators. 
\end{lemma}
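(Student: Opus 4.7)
The plan is to carry out a Guo-type macro--micro decomposition adapted to the vector field $H^m$. The starting point is that, by \eqref{kehigher}, $H^m$ commutes with the transport operator, so applying $H^m$ to \eqref{3} and using $\mathcal L\mathbf{P}=0$ gives
\begin{equation*}
(\partial_t+v\cdot\partial_x)H^m f = \mathcal L\{\mathbf{I}-\mathbf{P}\}H^m f+[H^m,\mathcal L]f+H^m\Gamma(f,f)=G_m,
\end{equation*}
with $G_m$ as in \eqref{defg}. Taking the partial Fourier transform in $x$, I would then test the identity $\partial_t\widehat{H^m f}+ik\cdot v\,\widehat{H^m f}=\widehat{G_m}$ against the five collision invariants $\mu^{1/2},v_j\mu^{1/2},\frac{1}{6}(|v|^2-3)\mu^{1/2}$ and against the auxiliary moments $(v_jv_l-1)\mu^{1/2}$ and $\frac{1}{10}(|v|^2-5)v_j\mu^{1/2}$ appearing in $\Theta_{jl},\Lambda_j$. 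Writing $\widehat{H^mf}=\mathbf{P}\widehat{H^mf}+\{\mathbf{I}-\mathbf{P}\}\widehat{H^mf}$ and evaluating the Gaussian integrals with \eqref{project} yields Fourier-side local conservation-type identities of the schematic form
\begin{equation*}
\begin{aligned}
&\partial_t\widehat{a_m}+ik\cdot\widehat{b_m}=(\mu^{1/2},\widehat{G_m}),\\
&\partial_t\widehat{b_{m,j}}+ik_j(\widehat{a_m}+2\widehat{c_m})+\textstyle\sum_l ik_l\Theta_{jl}(\{\mathbf{I}-\mathbf{P}\}\widehat{H^mf})=(v_j\mu^{1/2},\widehat{G_m}),\\
&\partial_t\widehat{c_m}+\tfrac{1}{3}ik\cdot\widehat{b_m}+\textstyle\sum_j ik_j\Lambda_j(\{\mathbf{I}-\mathbf{P}\}\widehat{H^mf})=(\tfrac{1}{6}(|v|^2-3)\mu^{1/2},\widehat{G_m}),
\end{aligned}
\end{equation*}
together with first-order-in-$t$ relations for $\Theta_{jl}(\{\mathbf{I}-\mathbf{P}\}\widehat{H^mf})$ and $\Lambda_j(\{\mathbf{I}-\mathbf{P}\}\widehat{H^mf})$ in which $\partial_tc_m$ and the symmetrized gradient of $b_m$ appear as the leading-order quantities, with everything else driven by moments of $\widehat{G_m}$ and by $(v\cdot\partial_x)\{\mathbf{I}-\mathbf{P}\}\widehat{H^mf}$.

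Next, I would differentiate ${\rm Re}\,\mathcal{K}_m(\hat f)$ in time and substitute these identities term by term. The third bracket in \eqref{if}, after insertion of the $\widehat{b_m}$-equation, produces the positive contribution $\tfrac{|k|^2}{1+|k|^2}|\widehat{a_m}|^2$ plus cross terms involving $\widehat{b_m},\widehat{c_m}$; the first bracket, after insertion of the $\Theta$-identity, yields (via a Korn-type algebraic identity for the symmetrized gradient of $b_m$) a positive multiple of $\tfrac{|k|^2}{1+|k|^2}|\widehat{b_m}|^2$ modulo cross terms with $\widehat{c_m}$; the second bracket, after the $\Lambda$-identity, gives $\rho_0\tfrac{|k|^2}{1+|k|^2}|\widehat{c_m}|^2$ modulo remainders. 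The main obstacle is managing the cross-coupling: I would fix $\rho_0$ large enough so that its $|\widehat{c_m}|^2$ contribution dominates the $c_m$-cross terms produced by the $a_m$- and $b_m$-pieces (absorbed by Cauchy--Schwarz), and then use Cauchy--Schwarz again to absorb residual $a_m\leftrightarrow b_m$ couplings into the already-obtained positive quadratic forms, ending with the clean lower bound $\tfrac{1}{2}\tfrac{|k|^2}{1+|k|^2}(|\widehat{a_m}|^2+|\widehat{b_m}|^2+|\widehat{c_m}|^2)$ on the left of \eqref{4b}.

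Finally, I would classify every remainder into two types. Terms involving only $\{\mathbf{I}-\mathbf{P}\}\widehat{H^mf}$ (through $\Theta_{jl}$ or $\Lambda_j$, and through the $v\cdot\partial_x$ contributions after pairing with the prefactor $\tfrac{ik}{1+|k|^2}$, which is bounded) are controlled by $C\normm{\{\mathbf{I}-\mathbf{P}\}\widehat{H^mf}}^2$, using that the rapidly decaying polynomial-Gaussian weights in $\Theta_{jl},\Lambda_j$ let one bound these moments by the $L^2_v$ and hence, via \eqref{+lowoftri} and \eqref{trinorm}, by the triple-norm of the microscopic part. Terms coming from moments of $\widehat{G_m}$ are precisely $\int\phi\,\widehat{G_m}\,dv$, $\Theta_{jl}(\widehat{G_m})$, $\Lambda_j(\widehat{G_m})$ (again after absorbing the bounded symbol $\tfrac{ik}{1+|k|^2}$ via Cauchy--Schwarz against the already-produced positive quadratic form), which is exactly the right-hand side of \eqref{4b}. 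The delicate point throughout is keeping constants explicit enough to pick $\rho_0$ once and for all; the bookkeeping of cross terms is the main technical difficulty, while the individual estimates are classical.
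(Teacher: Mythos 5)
Your proposal is correct and follows essentially the same route as the paper: the macro--micro decomposition of the $H^m$-differentiated equation (using \eqref{kehigher}), the Fourier-side fluid-type system obtained from the thirteen moments, the term-by-term differentiation of $\mathcal{K}_m$ with the Korn-type identity $\sum_{j,l}|ik_j\widehat{b_{m,l}}+ik_l\widehat{b_{m,j}}|^2=2|k|^2|\widehat{b_m}|^2+2|k\cdot\widehat{b_m}|^2$ for the $b_m$-block, and the final absorption of cross terms by taking $\rho_0$ large and applying Cauchy--Schwarz, with remainders split exactly into microscopic triple-norm terms and moments of $\widehat{G_m}$. No gaps.
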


\begin{proof}
We begin by deriving the macroscopic equations for $a_m$, $b_m$, and $c_m$. Applying $H^m$ to equation \eqref{3} gives
	\begin{equation}\label{hmf}
	\begin{aligned}
	\inner{\partial_{t} +v\cdot\partial_{x}  -\mathcal{L}} H^{m}  f &= -[H^{m}, \partial_{t}+v\cdot\partial_{x}] f + [ H^{m}, \ \mathcal{L}] f + H^{m}\Gamma(f, f)\\
	&=   [ H^{m}, \ \mathcal{L}] f + H^{m}\Gamma(f, f),
	\end{aligned}
	\end{equation}
	the last identity  using \eqref{kehigher}. Thus we perform   marco-micro decomposition in the above equation to obtain that  
 \begin{eqnarray}\label{5}
 \partial_t\mathbf{P}H^mf+v\cdot\partial_{x}\mathbf{P}H^mf=-
 \partial_t\{\mathbf{I}-\mathbf{P} \}H^mf+R_m+G_m,
 \end{eqnarray}	
where $G_m$ is given in \eqref{defg} and 
 \begin{equation*}\label{rg}
 	 R_m=-v\cdot\partial_{x} \{\mathbf{I}-\mathbf{P} \}H^mf.
 \end{equation*}
Taking the  velocity moments 
 \begin{eqnarray*}
 	\mu^{\frac12},~v\mu^{\frac12},~\frac{1}{6}(|v|^2-3)\mu^{\frac12},~(v_jv_l-1)\mu^{\frac12},
 	~\frac{1}{10}(|v|^2-5)v_j\mu^{\frac12},	
 \end{eqnarray*}	
in   equation \eqref{5}, we find that the quantities $(a_m, b_m, c_m)$ defined in \eqref{mac} satisfy the following fluid-type system: 
 \begin{eqnarray} \label{fys}
 \left\{
 \begin{aligned}
 & \partial_t a_m+\partial_x\cdot b_m=\int_{\mathbb R^3}\mu^{\frac12}  G_m dv ,\\
 &\partial_t b_m+\partial_x( a_m+ 2 c_m )+\partial_x\cdot \Theta\big (\{\mathbf{I}-\mathbf{P} \}H^mf\big)=\int_{\mathbb R^3} v\mu^{\frac12} G_m dv,\\
 &\partial_t c_m+\frac{1}{3}\partial_x\cdot b_m+
 \frac{5}{3}\partial_x\cdot \Lambda\big (\{\mathbf{I}-\mathbf{P} \}H^mf\big )=\frac{1}{6}\int_{\mathbb R^3} (|v|^2-3)\mu^{\frac12} G_m dv,\\
 &\partial_{t} \big\{\Theta_{jl}( \{\mathbf{I}-\mathbf{P}\}H^mf)+2 c_m \delta_{jl} \big\}+
 \partial_{x_j} b_{m,l}+\partial_{x_l} b_{m,j}=\Theta_{jl}(R_m+G_m),\\
 &\partial_{t}\Lambda_j ( \{\mathbf{I}-\mathbf{P}\}H^mf)+\partial_{x_j} c_m =
 \Lambda_j (R_m+G_m),
 \end{aligned}
 \right.
 \end{eqnarray}
where $1\leq j,l\leq 3$, and $G_m,  \Theta_{j,l}, \Lambda_j$ are given in \eqref{defg} and  \eqref{thetalamb}, and $\delta_{jl}$ is the Kronecker delta function.  
 
 \underline{\it Estimate of $\widehat{b_m}$.} Using the notations in \eqref{thetalamb} and \eqref{defg}, we claim that
  	\begin{equation} \label{4.05}
 	\begin{aligned}
 	&\partial_{t} \text{Re}\sum_{j,l=1}^3\Big( i k_j\widehat{b_{m,l}} + ik_l\widehat{b_{m,j}}\ \big|\ 
 	 \Theta_{jl}\big ( \{\mathbf{I}\!-\!\mathbf{P}\}\widehat{H^m f}\big ) + 2\delta_{jl}\widehat{ c_m }  \Big) + 
 	|k|^2|\widehat{b_m}|^2+2|k\cdot \widehat{b_m} |^2\\
 	& \leq  \frac14 |k|^2|\widehat { a_m }|^2 +C  |k|^2|\widehat{ c_m }|^2+
 	C  |k|^2 \normm{ \{\mathbf{I}-\mathbf{P}\}\widehat{H^mf}}^2\\
 	&\qquad+C \sum_{1\leq j,l\leq 3}|\Theta_{jl}(	\widehat{G}_m )|^2+C \sum_{1\leq j\leq 3}\big|\int_{\mathbb R^3}	 v_j\mu^{\frac12} \widehat{G}_m dv \big|^2.
 	\end{aligned}	
 	\end{equation}	
 To prove this, take the Fourier transform in the fourth equation of \eqref{fys}. Using the inner product $(z_1 | z_2) = z_1 \overline{z_2}$, we compute: 	\begin{eqnarray*} 
 		\begin{aligned}
 			&\sum_{j,l=1}^3|i k_j\widehat{b_{m,l}}+ik_l\widehat{b_{m,j}}|^2\\
 			&= 
 		 \sum_{j,l=1}^3 \Big( i k_j\widehat{b_{m,l}}+ik_l\widehat{b_{m,j}}\ \big| \  
 			  \Theta_{jl}(\widehat{R}_m+\widehat{G}_m )- \partial_{t} \Big  \{\Theta_{jl}( \{\mathbf{I}\!-\!\mathbf{P}\}\widehat{H^mf})+2\widehat{ c_m }\delta_{jl}\Big \}  \Big)\\
 			&=\sum_{j,l=1}^3 \Big( i k_j\widehat{b_{m,l}}+ik_l\widehat{b_{m,j}}\ \big|\ 
 			\Theta_{jl}(\widehat{R}_m+\widehat{G}_m )  \Big) \\
 			&\quad 
 			 -\partial_{t}\sum_{j,l=1}^3\Big ( i k_j\widehat{b_{m,l}}+ik_l\widehat{b_{m,j}}\ \big| \ 
 			\Theta_{jl}\big ( \{\mathbf{I}-\mathbf{P}\}\widehat{H^mf}\big)+2\widehat{ c_m }\delta_{jl}  \Big)\\
 			&\quad+\sum_{j,l=1}^3\Big( i k_j\partial_{t}\widehat{b_{m,l}}+ik_l\partial_{t}\widehat{b_{m,j}}\ \big|\ 
 			\Theta_{jl}\big ( \{\mathbf{I}-\mathbf{P}\}\widehat{H^mf}\big)+2\widehat{ c_m }\delta_{jl}  \Big).
 		\end{aligned}	
 	\end{eqnarray*} 
 This,  with	  the identity
 	\begin{eqnarray*} 
 		\sum_{j,l=1}^3|i k_j \widehat{b_{m,l}}+ik_l\widehat{ b_{m,j}}|^2=2|k|^2
 		|\widehat{b_m}|^2+2|k\cdot \widehat{b_m} |^2,
 	\end{eqnarray*}	
 yields that
 	\begin{equation} \label{4d}
 	\begin{aligned}
 &\partial_{t}\sum_{j,l=1}^3\Big ( i k_j\widehat{b_{m,l}}+ik_l\widehat{b_{m,j}}\ \big| \ 
 			\Theta_{jl}\big ( \{\mathbf{I}-\mathbf{P}\}\widehat{H^mf}\big)+2\widehat{ c_m }\delta_{jl}  \Big) +2|k|^2
 	|\widehat{b_m}|^2+2|k\cdot \widehat{b_m} |^2\\	
 	&= 	\sum_{j,l=1}^3\Big( i k_j\partial_{t}\widehat{b_{m,l}}+ik_l\partial_{t}\widehat{b_{m,j}}\ \big|\ 
 			\Theta_{jl}\big ( \{\mathbf{I}-\mathbf{P}\}\widehat{H^mf}\big)+2\widehat{ c_m }\delta_{jl}  \Big)\\
 	&\quad +\sum_{j,l=1}^3 \Big( i k_j\widehat{b_{m,l}}+ik_l\widehat{b_{m,j}}\ \big|\ 
 			\Theta_{jl}(\widehat{R}_m+\widehat{G}_m )  \Big) \\
 			&:=S_1+S_2.
 	\end{aligned}	
 	\end{equation}	
 	We first treat 
  $S_1$ and write
  \begin{multline*}
  		\sum_{j,l=1}^3\Big( i k_j\partial_{t}\widehat{b_{m,l}} \ \big|\ 
 			\Theta_{jl}\big ( \{\mathbf{I}-\mathbf{P}\}\widehat{H^mf}\big)+2\widehat{ c_m }\delta_{jl}  \Big)\\
 			= i	\sum_{j,l=1}^3 \Big(   \partial_{t}\widehat{b_{m,l}} \ \big|\ 
 			k_j \Theta_{jl}\big ( \{\mathbf{I}-\mathbf{P}\}\widehat{H^mf}\big)+2k_j \widehat{ c_m }\delta_{jl}  \Big).
  \end{multline*}
 From the second equation in \eqref{fys}, taking the Fourier transform in $x$ gives
 	\begin{eqnarray} \label{4.8}
 	\partial_t \widehat{b_{m,l}} +i k_l(\widehat { a_m }+2\widehat{ c_m })+\sum_{j=1}^3i k_j \Theta_{lj}\big (\{\mathbf{I}-\mathbf{P}\}\widehat{H^mf}\big)=\int_{\mathbb R^3}	  v_l\mu^{\frac12} \widehat{G}_m dv.
 	\end{eqnarray}
 	Combining the above two equations we obtain
 		\begin{equation*}
 	\begin{aligned}
 &\sum_{j,l=1}^3\Big( i k_j\partial_{t}\widehat{b_{m,l}} \ \big|\ 
 			\Theta_{jl}\big ( \{\mathbf{I}-\mathbf{P}\}\widehat{H^mf}\big)+2\widehat{ c_m }\delta_{jl}  \Big)\leq  \frac{ |k|^2 |\widehat { a_m }|^2}{8 } +C|k|^2|\widehat{ c_m }|^2\\
 			&
 	\qquad\quad \qquad\qquad\quad +C |k|^2\sum_{j,l}\big|\Theta_{jl}\big (\{\mathbf{I}-\mathbf{P}\}\widehat{H^mf}\big )\big|^2+	C \sum_{j}\Big|\int  v_j\mu^{1\over2} \widehat{G}_m dv\Big|^2 \\
 	&\leq  \frac18  |k|^2|\widehat { a_m }|^2+ C  |k|^2 |\widehat{ c_m }|^2+
 	C|k|^2\normm{ \{\mathbf{I}-\mathbf{P}\}\widehat{H^mf}}^2 +C \sum_{j}\Big|\int_{\mathbb R^3}  v_j\mu^{\frac12} \widehat{G}_m dv\Big|^2,
 	\end{aligned}	
 	\end{equation*}
 	the last inequality using   definition \eqref{thetalamb} of $\Theta_{j,l}.$  A similar bound holds for the remaining term in $S_1$, so we conclude
 	  \begin{equation}\label{4.9}
 	  	S_1\leq    \frac14 |k|^2|\widehat { a_m }|^2+ C  |k|^2 |\widehat{ c_m }|^2+
 	C |k|^2\normm{ \{\mathbf{I}-\mathbf{P}\}\widehat{H^mf}}^2 +C\sum_{j}\Big|\int_{\mathbb R^3}  v_j\mu^{\frac12} \widehat{G}_m dv\Big|^2.
 	  \end{equation}
It remains to estimate $S_2$, 	recalling 
\begin{equation*}
S_2=	\sum_{j,l=1}^3 \Big( i k_j\widehat{b_{m,l}}+ik_l\widehat{b_{m,j}}\ \big|\ 
 			\Theta_{jl}(\widehat{R}_m+\widehat{G}_m )  \Big). 
\end{equation*}
Notice that
 	$
 		R_m=-v\cdot\partial_{x}\{\mathbf{I}-\mathbf{P} \}H^mf,
$ and thus
 	\begin{equation*}
 	| \Theta_{jl}(\widehat{R}_m)|^2 \leq C |k|^2 \normm{ \{\mathbf{I}-\mathbf{P}\}\widehat{H^mf}}^2.
 	\end{equation*}	
 	Then  
 	\begin{eqnarray*} \label{4.7}
 	\begin{aligned}
 	S_2\leq&   |k|^2\sum_{l}|\widehat{b_{m,l}}|^2+C \sum_{jl}\big (| \Theta_{jl}(\widehat{R}_m)|^2+| \Theta_{jl}(\widehat{G}_m )|^2\big )\\
 	\leq&  |k|^2|\widehat{b_m}|^2+C |k|^2 \normm{ \{\mathbf{I}-\mathbf{P}\}\widehat{H^mf}}^2+C \sum_{jl}| \Theta_{jl}(\widehat{G}_m )|^2.
 	\end{aligned}	
 	\end{eqnarray*}
 	Substituting the above estimate and \eqref{4.9} into 
 	\eqref{4d} yields  that
 	\begin{equation*}
 		\begin{aligned}
 &\partial_{t}\sum_{j,l=1}^3\Big ( i k_j\widehat{b_{m,l}}+ik_l\widehat{b_{m,j}}\ \big| \ 
 			\Theta_{jl}\big ( \{\mathbf{I}-\mathbf{P}\}\widehat{H^mf}\big)+2\widehat{ c_m }\delta_{jl}  \Big) +2|k|^2
 	|\widehat{b_m}|^2+2|k\cdot \widehat{b_m} |^2\\	
 	&\leq \frac14 |k|^2|\widehat { a_m }|^2+ C  |k|^2 |\widehat{ c_m }|^2+
 	C |k|^2\normm{ \{\mathbf{I}-\mathbf{P}\}\widehat{H^mf}}^2 +C \sum_{j}\Big|\int_{\mathbb R^3} \widehat{G}_m v_j\mu^{\frac12} dv\Big|^2\\
 	&\quad+  |k|^2|\widehat{b_m}|^2 +C \sum_{jl}| \Theta_{jl}(\widehat{G}_m )|^2.
 	\end{aligned}	
 	\end{equation*}
 	This gives assertion \eqref{4.05}. 
 	
 		 \underline{\it Estimate of  $\widehat{ c_m }$}. Recall the notations in \eqref{thetalamb} and \eqref{defg}.  We now show that for any $0 < \eps < 1$, 
 	\begin{equation} \label{4.10a}
 	\begin{aligned}
 	&\partial_t 	\text{Re} \sum_{j=1}^3\Big( \Lambda_{j}(\{\mathbf{I}-\mathbf{P}\}\widehat{H^mf})\ | \ i k_j\widehat{ c_m }\Big)+\frac12|k|^2|\widehat{ c_m }|^2\leq \eps |k\cdot \widehat{b_m}|^2\\
 	&\qquad\ +C_{\eps}\bigg[\sum_j\big |\Lambda_j(\widehat{G}_m )\big |^2+ \Big| \int  (|v|^2-3)\mu^{1\over 2} \widehat{G}_m dv  \Big|^2 +  |k|^2 \normm{ \{\mathbf{I}-\mathbf{P}\}\widehat{H^mf}}^2\bigg].
 	\end{aligned}
 	\end{equation}
 	To do so we perform 
  the partial  Fourier transform  in the fifth equation of  \eqref{fys},   to get 
 	\begin{eqnarray*}
 		\partial_t \Lambda_j \big (\{\mathbf{I}-\mathbf{P}\}\widehat{H^mf}\big ) +i k_j\widehat{ c_m }=\Lambda_j\big (\widehat{R}_m+\widehat{G}_m \big),
 	\end{eqnarray*}
and thus
 	\begin{eqnarray} \label{4.10la}
 	\begin{aligned}
 	& \partial_t \Big (\Lambda_j (\{\mathbf{I}-\mathbf{P}\}\widehat{H^mf})\ \big| \  ik_j \widehat{ c_m } \Big)+|k_j|^2 |\widehat{ c_m }|^2\\
 	&=\big (\Lambda_j(\widehat{R}_m+\widehat{G}_m )\ |\ ik_j \widehat{ c_m }\big)+\big (\Lambda_j (\{\mathbf{I}-\mathbf{P}\}\widehat{H^mf})\ |\ 
 	ik_j \partial_t \widehat{ c_m }\big).
 	\end{aligned}	
 	\end{eqnarray}
 	Moreover,  
recalling 
 	$
 		R_m=-v\cdot\partial_{x}\{\mathbf{I}-\mathbf{P} \}H^mf
$  and defintion \eqref{thetalamb} of $\Lambda_j,$ we have
 	\begin{equation*}
 	| \Lambda_{j}(\widehat{R}_m)|  \leq C \big (1+|k|\big )\normm{ \{\mathbf{I}-\mathbf{P}\}\widehat{H^mf}}.
 	\end{equation*}	
 Then  
 	\begin{eqnarray} \label{4.11a}
 	\begin{aligned}
 	&\textrm{Re}\,\big (\Lambda_j(\widehat{R}_m+\widehat{G}_m )\ |\ ik_j \widehat{ c_m }\big)  \leq \frac12 |k_j|^2 |\widehat{ c_m }|^2+C 	\big (|\Lambda_j
 	(\widehat{R}_m)|^2+|\Lambda_j(\widehat{G}_m )|^2 \big)\\
 	&\leq \frac{1}{2}  |k_j|^2 |\widehat{ c_m }|^2+C\big |\Lambda_j\big (\widehat{G}_m\big )\big |^2 +C(1+|k|^2)\normm{\{\mathbf{I}-\mathbf{P}\}\widehat{H^mf}}^2.
 	\end{aligned}  				
 	\end{eqnarray}	
From the third equation in \eqref{fys} as well as definition \eqref{thetalamb} of $\Lambda_j$, taking the Fourier transform in $x$ gives
 	\begin{eqnarray*}
 		\partial_t \widehat{ c_m } +\frac{1}{3}i k \cdot \widehat{b_m}+\frac{5}{3}\sum_{j=1}^3i k_j  \Lambda_j\big(\{\mathbf{I}-\mathbf{P}\}\widehat{H^mf}\big ) =  \frac{1}{6}\int_{\mathbb R^3} (|v|^2-3)\mu^{\frac12} \widehat{G}_m dv. 	\end{eqnarray*}	
 Thus, for any $\eps>0,$
 	\begin{equation}\label{4.12a}
 	\begin{aligned}
 	&\textrm{Re}\, \big (\Lambda_j (\{\mathbf{I}-\mathbf{P}\}\widehat{H^mf})\ |\ 
 	ik_j \partial_t \widehat{ c_m }\big)\\
 	& \leq  \eps |k\cdot \widehat{b_m}|^2
 	+C_\eps \sum_j  |k|^2|  \Lambda_j (\{\mathbf{I}-\mathbf{P}\}\widehat{H^mf})|^2 +C_\eps  \Big| \int_{\mathbb R^3}  (|v|^2-3)\mu^{1\over 2} \widehat{G}_m dv  \Big|^2\\
 	& \leq \eps |k\cdot \widehat{b_m}|^2
 	+C_\eps |k|^2\normm{ \{\mathbf{I}-\mathbf{P}\}\widehat{H^mf}}^2 +C_\eps \Big| \int_{\mathbb R^3}  (|v|^2-3)\mu^{1\over 2}  \widehat{G}_m dv  \Big|^2.
 	\end{aligned}
 	\end{equation}	
 	Combining the above estimate and \eqref{4.11a} with \eqref{4.10la} we conclude   assertion \eqref{4.10a}.
 
 \underline{\it Estimate of $\widehat { a_m }$}.  Recall the notations in \eqref{defg}, we claim that
 	\begin{eqnarray} \label{4.13}
 	\begin{aligned}
 	&\partial_t \text{Re} \sum_{j=1}^3\big (\widehat{b_{m,j}}\ | \ ik_j\widehat { a_m } \big )+\frac34 |k|^2|\widehat { a_m }|^2\\
 &\leq  \frac{3}{2}|k\cdot \widehat{b_m}|^2+C  |k|^2|\widehat{ c_m }|^2 +C  |k|^2\normm{ \{\mathbf{I}-\mathbf{P}\}\widehat{H^mf}}^2\\
 	&\quad +C\sum_j \Big| \int_{\mathbb R^3}	  v_j\mu^{\frac12} \widehat{G}_m dv \Big|^2
 	+   \Big|\int_{\mathbb R^3} \mu^{\frac12}\widehat{G}_m dv\Big|^2.
 	\end{aligned} 
 	\end{eqnarray}
 	Indeed, we  take the complex inner product with $ik_j \widehat { a_m }$ in equation \eqref{4.8},   to  get
 	\begin{eqnarray}\label{4.14a}
 	\begin{aligned}
 	&\partial_t \sum_j\big (\widehat{b_{m,j}}\ |\ ik_j \widehat { a_m }\big ) +| k|^2 |\widehat { a_m }|^2
 	\\
 	& =  -2\sum_j\big (ik_j\widehat{ c_m }\ | \ i k_j\widehat { a_m }\big )-\sum_{j,l}\big(i k_l \Theta_{jl}(\{\mathbf{I}-\mathbf{P}\}\widehat{H^mf})\ |\ ik_j\widehat { a_m }\big)\\
 	&\quad +\sum_j\bigg(\int_{\mathbb R^3}	 v_j\mu^{\frac12} \widehat{G}_m dv \  \big|\ ik_j\widehat { a_m }\bigg)+ \sum_j\big (\widehat{b_{m,j}}\ | \ i k_j  \partial_t\widehat { a_m }\big ).
 	\end{aligned}
 	\end{eqnarray}
 	The real parts of the first two terms on the right hand side of \eqref{4.14a} are bounded from above by
 	\begin{eqnarray*}
 		\frac18 |k|^2 |\widehat { a_m }|^2+C |k|^2\Big( |\widehat{ c_m }|^2+\normm{\{\mathbf{I}-\mathbf{P}\}\widehat{H^mf}}^2 \Big).
 	\end{eqnarray*}	
 	Meanwhile the real part of the third term on the right hand side of \eqref{4.14a} is bounded from above by	
 	\begin{eqnarray*}
 		\frac18 |k|^2 |\widehat { a_m }|^2+C\sum_j \Big| \int_{\mathbb R^3}	  v_j\mu^{\frac12} \widehat{G}_m dv \Big|^2.
 	\end{eqnarray*}			
 	For the last term in \eqref{4.14a}, we use the fact that
 	\begin{eqnarray*}
 		\partial_t \widehat { a_m }+i k\cdot \widehat{b_m}=\int_{\mathbb R^3} \mu^{\frac12} \widehat{G}_m dv,
 	\end{eqnarray*}
 which follows by taking the  partial  Fourier transform in the first equation of \eqref{fys};  this gives 
 	\begin{eqnarray*}
 		\begin{aligned}
 		\textrm{Re} \sum_j\big (\widehat{b_{m,j}}\ |\ i k_j  \partial_t\widehat { a_m }\big ) 
 			 &= \sum_j\big(\widehat{b_{m,j}}\ |\  k_j k\cdot \widehat{b_m} \big)+\textrm{Re}\sum_j\Big(\widehat{b_{m,j}}\ \big|\ ik_j\int_{\mathbb R^3}\mu^{\frac12} \widehat{G}_m  dv\Big )\\
 			&\leq \frac32 |k\cdot \widehat{b_m}|^2+   \Big|\int_{\mathbb R^3} \mu^{\frac12} \widehat{G}_m dv\Big|^2.
 		\end{aligned}
 	\end{eqnarray*}
 Then, substituting  the above estimates into    \eqref{4.14a}  yields assertion \eqref{4.13}.
 
 We combine estimates \eqref{4.05} and \eqref{4.13} to conclude that
 \begin{equation*}
 	\begin{aligned}
 	&\partial_{t} \text{Re}\bigg[	\sum_{j,l=1}^3\Big( i k_j\widehat{b_{m,l}} + ik_l\widehat{b_{m,j}}  \big|  
 	 \Theta_{jl}\big ( \{\mathbf{I}\!-\!\mathbf{P}\}\widehat{H^m f}\big ) + 2\delta_{jl}\widehat{ c_m }  \Big) + \sum_{j=1}^3\big (\widehat{b_{m,j}}\ | \ ik_j\widehat { a_m } \big )\bigg]
 	 \\
 	 &\qquad  +\frac12 |k|^2|\widehat { a_m }|^2+ 
 	|k|^2|\widehat{b_m}|^2+\frac12|k\cdot \widehat{b_m} |^2\\
 	& \leq   C  |k|^2|\widehat{ c_m }|^2+
 	C  |k|^2 \normm{ \{\mathbf{I}-\mathbf{P}\}\widehat{H^mf}}^2 +C \sum_{1\leq j,l\leq 3}|\Theta_{jl}(	\widehat{G}_m )|^2\\
 	&\quad+C \sum_{1\leq j\leq 3}\Big|\int_{\mathbb R^3}	 v_j\mu^{\frac12} \widehat{G}_m dv \Big|^2  
 	+   \Big|\int_{\mathbb R^3} \mu^{\frac12}\widehat{G}_m dv\Big|^2,
 	\end{aligned}
 \end{equation*}
 which, together with \eqref{4.10a}, implies \eqref{4b}, provided $\eps$ in \eqref{4.12a} is chosen sufficiently small and $\rho_0$ in \eqref{if} is sufficiently large. This completes the proof of Lemma \ref{l5.2a}.
	\end{proof}

 The following lemmas are devoted to estimating the terms appearing on the right-hand side of inequality \eqref{4b}. 
 
  \begin{lemma} \label{lem: ma}
Under the hypothesis of Proposition \ref{p3},  there exists a constant $C > 0$, independent of $T$, $m$, $\eps_0$, and $C_*$, such that
 		\begin{equation} \label{4.2}
 	\begin{aligned}
 	&\int_{\mathbb{Z}^3} \bigg[ \int_1^{T}\big |\big (\mathscr{F}_x\big (H^m \Gamma(f,f)\big), (v_j v_l-1)\mu^{\frac12}\big)_{L^2_v}\big|^2 dt\bigg]^{\frac12}d\Sigma(k)\\
 	&	\leq 	C\eps_0 \int_{\mathbb{Z}^3} \sup_{1\leq t\leq T}\norm{\widehat{H^mf}(t,k)}_{L^2_v}d\Sigma(k)\\
 	&\quad+C\eps_0  \int_{\mathbb{Z}^3} \bigg( \int_1^{T} \normm{\widehat{H^mf}(t,k)}^2dt  \bigg)^{\frac12}d\Sigma(k)
 	+C  \eps_0^2 \frac{ C_*^m  (m!)^{\tau}}{(m+1)^2}
 	\end{aligned}
 	\end{equation}
 	and
 		\begin{equation} \label{4.2a}
 		\int_{\mathbb{Z}^3}\bigg[ \int_1^{T}\big|(\mathscr{F}_x([H^m,\mathcal{L}]f), (v_j v_l-1)\mu^{\frac12})_{L^2_v}\big|^2 dt\bigg]^{\frac12}d\Sigma(k)
 			\leq  \frac{C}{C_*}  \frac{ \eps_0 C_*^{m }~ (m!)^{\tau}}{(m+1)^2}.
 		\end{equation}
 \end{lemma}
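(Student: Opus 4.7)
The plan is to treat both estimates via a single template: apply a Leibniz-type expansion of $H^m$ against the trilinear operator $\mathcal{T}$ of \eqref{matht}, bound each resulting term by one of the refined trilinear inequalities from Subsection~\ref{subsec:coer}, invoke the Minkowski--Fubini inequality \eqref{MF} to convert the $k$-convolution into a product of $L^1_k L^\infty_T L^2_v$ and $L^1_k L^2_T$ norms, and close using the inductive hypothesis \eqref{indassum} together with the a priori bound \eqref{v2}.

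For \eqref{4.2}, I would invoke the standard $v$-Leibniz rule for the collisional $\mathcal{T}$ (whose $\partial_{v_1}$-derivative distributes across all three slots via the pre-/post-collision change of variables), noting that the $t\partial_{x_1}$ component of $H$ acts only on the two $f$-arguments since $\mu^{\frac12}$ depends only on $v$. This yields
\begin{equation*}
H^m\Gamma(f,f)=\sum_{j_1+j_2+j_3=m}\binom{m}{j_1,j_2,j_3}\mathcal{T}\bigl(H^{j_1}f,H^{j_2}f,\partial_{v_1}^{j_3}\mu^{\frac12}\bigr).
\end{equation*}
Taking the partial $x$-Fourier transform, pairing with $\phi=(v_jv_l-1)\mu^{\frac12}$, and applying \eqref{ketres} with the weight $\omega=\partial_{v_1}^{j_3}\mu^{\frac12}$ (which satisfies the pointwise bound \eqref{contild} with constant $\lesssim L_{j_3}$ of Hermite-type growth) gives a pointwise-in-$(t,k)$ bound whose $L^2_T$-$L^1_k$ norm is then controlled by \eqref{MF}. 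Separating the two boundary contributions $j_1=m$ and $j_2=m$ (in which the remaining $f$-factor is $\|\hat f\|_{L^1_k L^\infty_T L^2_v}$ or $\|\normm{\hat f}\|_{L^1_k L^2_T}$, both bounded by $\eps_0$ via \eqref{v2}, yielding the first two terms on the right of \eqref{4.2}) from the interior contributions $1\leq j_1,j_2\leq m-1$ (handled via \eqref{indassum}), I would close using the combinatorial bound $\binom{m}{j_1,j_2,j_3}(j_1!)^\tau(j_2!)^\tau(j_3!)^\tau\leq (m!)^\tau$ (a consequence of $j_1!\,j_2!\,j_3!\leq m!$) together with $\sum_{j_1+j_2=n}1/[(j_1+1)(j_2+1)]^2\lesssim 1/(n+1)^2$, which delivers $\eps_0^2 C_*^m(m!)^\tau/(m+1)^2$ provided $C_*$ is taken large enough to absorb the $L_{j_3}$ factors.

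For \eqref{4.2a}, I would write $\mathcal{L}f=\Gamma(\sqrt{\mu},f)+\Gamma(f,\sqrt{\mu})$ and perform the same Leibniz expansion; the commutator cancels exactly those terms in which no derivative touches a $\sqrt{\mu}$, leaving
\begin{equation*}
[H^m,\mathcal{L}]f=\sum_{\substack{j_1+j_2+j_3=m\\ j_2+j_3\geq 1}}\binom{m}{j_1,j_2,j_3}\Bigl\{\mathcal{T}\bigl(\partial_{v_1}^{j_2}\sqrt{\mu},H^{j_1}f,\partial_{v_1}^{j_3}\mu^{\frac12}\bigr)+\mathcal{T}\bigl(H^{j_1}f,\partial_{v_1}^{j_2}\sqrt{\mu},\partial_{v_1}^{j_3}\mu^{\frac12}\bigr)\Bigr\}.
\end{equation*}
Each term is now bounded by \eqref{tretmate} or \eqref{trisole}, producing a factor $\normm{\widehat{H^{j_1}f}}$ multiplied by $L_{j_2}L_{j_3}\normm{\hat\phi}$ (no further $L^2_v$-factor of $\widehat{H^{j_1}f}$ appears). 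Since $j_1\leq m-1$ throughout, the inductive hypothesis \eqref{indassum} applies at full strength in every term, and the resulting combinatorial sum is of order $C_*^{m-1}$, which exactly yields the claimed $(C/C_*)\cdot\eps_0 C_*^m(m!)^\tau/(m+1)^2$.

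The main obstacle is the bookkeeping: justifying the distributed $v$-Leibniz rule for $H^m$ on the collisional $\mathcal{T}$ (standard but delicate, requiring the pre-/post-collision symmetry) and then arranging the multinomial--factorial sums so that the sharp $1/(m+1)^2$ decay survives uniformly in $m$. Once these two points are settled, the estimates follow mechanically from the trilinear toolkit of Section~\ref{sec:prelim}, in a manner closely parallel to the commutator estimates developed in \cite{MR4930523,MR4356815}.
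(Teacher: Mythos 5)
Your proposal is correct and follows essentially the same route as the paper's proof: a multinomial Leibniz expansion of $H^m$ over the trilinear operator $\mathcal{T}$ (using $H^p\mu^{\frac12}=\partial_{v_1}^p\mu^{\frac12}$), the weighted trilinear bounds \eqref{ketres}, \eqref{trisole}, \eqref{tretmate} together with the Gaussian-derivative estimate $|\partial_{v_1}^p\mu^{\frac12}|\leq 2^pp!\,\mu^{\frac14}$, Minkowski/Young in the form \eqref{MF}, separation of the two top-order terms (all derivatives on one $f$-slot) from the interior ones, and the same factorial--geometric bookkeeping closed by the induction hypothesis. The only cosmetic differences are that the paper organizes the sum as $\binom{m}{n}\binom{n}{p}$ and splits it into $\mathcal J_1,\mathcal J_2,\mathcal J_3$, and that the absorption of the residual $2^{j_3}$ factors needs only the standing assumption $C_*\geq 4$ (so that $(2/C_*)^{j_3}\leq 2^{-j_3}$), with the constant $C$ remaining independent of $C_*$, rather than an additional largeness condition on $C_*$.
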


\begin{proof}
Applying Leibniz formula and using \eqref{trb}, we obtain
\begin{multline*}
	H^{m}{\Gamma}(f,f)=\sum\limits_{n=0}^{ m }\sum\limits_{p=0}^{  n}\binom{m}{n}\binom{n}{p}\mathcal{T}(H^{m-n}f,\  H^{n-p}f,\  H^{p}\mu^{\frac12})\\
	=\sum\limits_{n=0}^{ m }\sum\limits_{p=0}^{  n}\binom{m}{n}\binom{n}{p}\mathcal{T}(H^{m-n}f, \ H^{n-p}f,\  \partial_{v_1}^{p}\mu^{\frac12}) 
\end{multline*}
where $\mathcal{T}$ is defined in \eqref{matht}, and the last equality follows from the identity $H^p\mu^{\frac12} = \partial_{v_1}^{p}\mu^{\frac12}$. Taking the partial Fourier transform in $x$ and applying the triangle inequality yields
\begin{equation}\label{uppboun}
\int_{\mathbb{Z}^3}\left(\int_{1}^{T}\big|\big(\mathscr{F}_x(H^{m}\Gamma(f,f)), (v_jv_l-1)\mu^{\frac12}\big)_{L^2_v}\big|^2dt\right)^{\frac12}d\Sigma(k) \leq \mathcal{J}_1+\mathcal{J}_2+\mathcal{J}_3,
\end{equation}
with
\begin{equation}\label{J1j2j3}
\left\{
\begin{aligned}
	\mathcal{J}_1&=\int_{\mathbb{Z}^3}\sum\limits_{p=0}^{m }\binom{m}{p} \bigg[\int_{1}^{T}\big|\big (\hat{\mathcal{T}}(\hat{f},  \widehat{H^{m-p}f},  \partial_{v_1}^{p}\mu^{\frac12}),   (v_jv_l-1)\mu^{\frac12}\big)_{L^2_v}\big|^2dt\bigg]^{1\over2}d\Sigma(k),\\
	\mathcal{J}_2&=\int_{\mathbb{Z}^3} \sum\limits_{n=1}^{m-1}\sum\limits_{p=0}^n\binom{m}{n}\binom{n}{p}\\
	&\quad\quad\quad\times\bigg[\int_{1}^{T}\big|\big(\hat{\mathcal{T}}(\widehat{H^{m-n}f}, \widehat{H^{n-p}f}, \ \partial_{v_1}^{p}\mu^{1\over2}), (v_jv_l-1)\mu^{\frac12}\big)_{L^2_v}\big|^2dt\bigg]^{1\over2}d\Sigma(k),\\
	\mathcal{J}_3&=\int_{\mathbb{Z}^3}\left(\int_{1}^{T}\big|\big (\hat{\mathcal T}(\widehat{H^{m}f},\ \hat{f}, \ \mu^{\frac12}),\  (v_jv_l-1)\mu^{\frac12}\big)_{L^2_v}\big|dt\right)^{\frac12}d\Sigma(k).
\end{aligned}
\right.
\end{equation}
We proceed to estimate $ \mathcal J_1,\mathcal J_2,$ and $\mathcal J_3$ as follows.

 \smallskip
\noindent\underline{\it Estimate on $\mathcal J_1$}.
We decompose $\mathcal{J}_1$ as below:
\begin{equation}\label{j1}
\begin{aligned}
\mathcal{J}_1 & \leq   \int_{\mathbb{Z}^3}\left(\int_{1}^{T}\big|\big(\hat{\mathcal T}(\hat{f}, \widehat{H^{m}f}, \mu^{\frac12}), (v_jv_l-1)\mu^{\frac12}\big)_{L^2_v}\big|^2dt\right)^{1\over 2}d\Sigma(k)
\\ & \quad +
\int_{\mathbb{Z}^3}\sum\limits_{p=1}^{m}\binom{m}{p}\bigg[\int_{1}^{T}\big|\big(\hat{\mathcal{T}}(\hat{f}, \widehat{H^{m-p}f}, \ \partial_{v_1}^{p} \mu^{\frac12}), (v_jv_l-1)\mu^{\frac12}\big)_{L^2_v}\big|^2dt\bigg]^{1\over2}d\Sigma(k)\\
&:=\mathcal J_{1,1}+\mathcal J_{1,2}.
\end{aligned}
\end{equation}
A direct verification yields that
\begin{equation}\label{mu}
\forall \ p\geq 0, \quad \big| \partial_{v_1}^{p}  \mu^{\frac12}\big| \leq
 2^p p!  \mu^{\frac14}.
\end{equation}
We now apply \eqref{ketres} with $\omega = \partial_{v_1}^p \mu^{1/2}$ to estimate $\mathcal{J}_{1,2}$ defined in \eqref{j1} as follows:
\begin{equation}\label{tecest1}
	\begin{aligned}
		&\mathcal J_{1,2} =\int_{\mathbb{Z}^3}\sum\limits_{p=1}^{m }\binom{m}{p} \bigg[\int_{1}^{T}\big|\big(\hat{\mathcal{T}}(\hat{f}, \widehat{H^{m-p}f}, \partial_{v_1}^{p}\mu^{\frac12}), (v_jv_l-1)\mu^{\frac12}\big)_{L^2_v}\big|^2dt\bigg]^{1\over2}d\Sigma(k)\\
		&\leq C \int_{\mathbb{Z}_k^3}\sum\limits_{p=1}^{m }\binom{m}{p} 2^p p! \bigg[ \int_{1}^{T} \bigg(
		 \int_{\mathbb Z_\ell^3 } \norm{\hat f(k-\ell)}_{L^2_v}     \normm{\widehat{H^{m-p}f}(\ell)} d\Sigma(\ell)\bigg) ^2 dt\bigg]^{1\over2}d\Sigma(k)\\
		&\leq C \sum\limits_{p=1}^{m }\binom{m}{p} 2^p p!  \int_{\mathbb{Z}_k^3} \bigg[\int_{\mathbb Z_\ell^3 } \Big( \int_{1}^{T}  
		 \norm{\hat f(k-\ell)}_{L^2_v}^2     \normm{\widehat{H^{m-p}f}(\ell)}^2 dt\Big)^{1\over2} d\Sigma(\ell) \bigg] d\Sigma(k)\\
		&\leq C \sum\limits_{p=1}^{ m }\binom{m}{p} 2^p p! \\
		&\qquad \times \int_{\mathbb{Z}_k^3} \bigg[\int_{\mathbb Z_\ell^3 }  \Big(\sup_{1\leq t \leq T}\norm{\hat f(k-\ell)}_{L^2_v}\Big)\Big( \int_{1}^{T}  
		     \normm{\widehat{H^{m-p}f}(\ell)}^2 dt\Big)^{1\over2} d\Sigma(\ell) \bigg] d\Sigma(k)\\
		     &\leq C \sum\limits_{p=1}^{m }\binom{m}{p} 2^p p! \bigg[\int_{\mathbb{Z}^3}  \sup_{1\leq t\leq T}\norm{\hat f}_{L^2_v} d\Sigma(k)	\bigg]	        \int_{\mathbb{Z}^3}  \Big( \int_{1}^{T}  
		     \normm{\widehat{H^{m-p}f}}^2 dt\Big)^{1\over2}  d\Sigma(k),
	\end{aligned}
\end{equation}
where the second inequality uses Minkowski's inequality and the last inequality follows from Young's inequality for discrete convolutions. Then, using the inductive assumption \eqref{indassum}, we obtain
\begin{equation}\label{tecest2}
	\begin{aligned}
	\mathcal J_{1,2} &	\leq C\sum\limits_{p=1}^{m }\binom{m}{p} 2^p p! 
	\eps_0	   \frac{\eps_0 C_*^{m -p }[(m-p)!]^\tau }{(m-p+1)^2} \\
	 	&\leq C  \eps_0^2   \sum\limits_{p=1}^m \frac{m! }{(m-p)!} 2^{p}  \frac{C_*^{m -p }[(m-p)!]^\tau }{(m-p+1)^2}\leq C  \eps_0^2C_*^{ m } (m!)^\tau \sum\limits_{p=1}^m   \frac{2^{-p}}{(m-p+1)^2},
	\end{aligned}
\end{equation}
where the last line uses the condition $C_*>4$.   
To estimate the last term in \eqref{tecest2}, denote by $[\frac{m}{2}]$  the largest integer less than or equal to $\frac{m}{2}$. Then
\begin{equation}\label{teccom}
\begin{aligned}
	 & \sum\limits_{p=1}^m   \frac{2^{ -p }}{(m-p+1)^2}   \leq        \sum\limits_{p=1}^{[\frac{m}{2}] }\frac{1}{(m-p+1)^2}2^{-p}+\sum\limits_{p=[\frac{m}{2}]+1 }^{  m }\frac{1}{(m-p+1)^2}2^{-p}   \\
&\leq   C    \bigg\{\sum\limits_{p=1}^{ [\frac{m}{2}] }\frac{1}{(m+1)^2}2^{-p} +\sum\limits_{p=[\frac{m}{2}]+1}^{   m}\frac{1}{(m+1)^2}(m+1)^22^{-p} \bigg\} \leq     \frac{C}{(m+1)^2},	 \end{aligned}
\end{equation}
where the last inequality uses the estimate
\begin{equation*}
\sum\limits_{p=[\frac{m}{2}]+1}^{ m }(m+1)^22^{-p} \leq\sum\limits_{p=[\frac{m}{2}]+1}^{m}(m+1)^22^{-\frac{m}{2}}   \leq (m+1)^3 2^{-\frac{m}{2}}   \leq C.
\end{equation*}
Substituting \eqref{teccom} into \eqref{tecest2} yields
\begin{equation}\label{j12}
	\mathcal J_{1,2}\leq   C \eps_0^2\frac{C_{*}^{m}(m!)^\tau}{(m+1)^2}.
\end{equation}
By a similar and slightly modified argument we obtain 
 \begin{equation*}
 \begin{aligned}
 	&\mathcal J_{1,1}=\int_{\mathbb{Z}^3}\left(\int_{1}^{T}\big|\big(\hat{\mathcal T}(\hat{f}, \widehat{H^{m}f}, \mu^{\frac12}),  (v_j v_l-1)\mu^{\frac12}\big)_{L^2_v}\big|dt\right)^{1\over 2}d\Sigma(k)\\
 	&\leq  C    \Big(\int_{\mathbb Z^3} \sup\limits_{1\leq t\leq T}\norm{\hat{f}(t,k)}_{L^2_v}d\Sigma(k)\Big)\int_{\mathbb{Z}^3}\Big[\int_{1}^{T}\normm{\widehat{H^{m}f}(t,k)} ^{2}dt \Big]^{1\over2}d\Sigma(k) \\
 	&\leq  C \eps_0   \int_{\mathbb{Z}^3}\bigg[ \int_{1}^{T} \normm{\widehat{H^{m}f}  (t,k)}^2     dt \bigg]^{1\over2}d\Sigma(k).
 	\end{aligned}
 \end{equation*}
Substitute the above estimate and \eqref{j12} into \eqref{j1} yields  that
\begin{equation*}\label{uppj1}
	\mathcal J_1\leq  C\eps_0  \int_{\mathbb{Z}^3}\bigg[ \int_{1}^{T} \normm{\widehat{H^{m}f}  (t,k)}^2     dt \bigg]^{1\over2}d\Sigma(k)+  C \eps_0^2\frac{C_{*}^{m}(m!)^\tau}{(m+1)^2}.
\end{equation*}

 \smallskip
\noindent
\underline{\it Estimate on $\mathcal J_2$}.  Recall  $\mathcal{J}_2$ is given in \eqref{J1j2j3}.  A similar argument as in \eqref{tecest1} and \eqref{tecest2} gives
\begin{equation}\label{j2+}
\begin{aligned}
\mathcal{J}_2 \leq&  
  C \sum\limits_{n=1}^{m-1 }\sum\limits_{p=0}^{ n}\binom{m}{n}\binom{n}{p}2^p p!\int_{\mathbb{Z}^3}\sup\limits_{1\leq t\leq T} \norm{\widehat{H^{m-n}f}(t,k)}_{L^2_v}d\Sigma(k)\\&\qquad\qquad\qquad\qquad\quad\qquad\qquad\quad\times\int_{\mathbb{Z}^3}\left(\int_{1}^{T}\normm{ \widehat{H^{n-p}f}(t,k)}^{2}dt\right)^{1\over2}d\Sigma(k).
	\end{aligned}
\end{equation}
Using assumption \eqref{indassum} and repeating the computation in \eqref{teccom}, we find that for any $1 \leq n \leq m-1$,
\begin{multline*}
		 \sum\limits_{p=0}^{ n }\binom{n}{p}2^p p!\int_{\mathbb{Z}^3}\left(\int_{1}^{T}\normm{\widehat{H^{n-p}f}(t,k)}^{2}dt\right)^{\frac12}d\Sigma(k)\\
		  \leq  \eps_0 C_*^{n} (n!)^\tau \sum\limits_{p=0}^{ n }  \frac{2^{-p} }{(n-p+1)^2}
		\leq  C\eps_0\frac{C_{*}^{n}(n!)^\tau }{(n+1)^2}.
	\end{multline*}
Substituting this into \eqref{j2+} and using \eqref{indassum} again, we compute
\begin{equation*}
	\begin{aligned}
		&\sum\limits_{n=1}^{  m-1 }\sum\limits_{p=0}^{ n}\binom{m}{n}\binom{n}{p}2^p p!\int_{\mathbb{Z}^3}\sup\limits_{1\leq t\leq T} \norm{\widehat{H^{m-n}f}(t,k)}_{L^2_v}d\Sigma(k)\\&\qquad\qquad\qquad\qquad\quad\qquad\qquad\quad\times\int_{\mathbb{Z}^3}\left(\int_{1}^{T}\normm{ \widehat{H^{n-p}f}(t,k)}^{2}dt\right)^{1\over2}d\Sigma(k)\\
		&\leq C\eps_0 \sum\limits_{n=1}^{ m-1 }  \frac{m!}{n!(m-n)!} \frac{C_{*}^{n}(n!)^\tau }{(n+1)^2}\int_{\mathbb{Z}^3}\sup\limits_{1\leq t\leq T} \norm{\widehat{H^{m-n}f}(t,k)}_{L^2_v}d\Sigma(k)  \\
		& \leq   C\eps_0^2 \sum\limits_{n=1}^{m-1 }\frac{m!}{n!(m-n)!}\frac{C_{*}^{n}(n!)^\tau}{(n+1)^2} \frac{C_{*}^{m-n}[(m-n)!]^\tau}{(m-n+1)^2}\\
		&\leq    C\eps_0^2 C_{*}^{m}  \sum\limits_{n=1}^{  m-1 } \frac{m! (n!)^{\tau-1}[(m-n )!]^{\tau-1}}{(m-n+1)^2 (n+1)^2}
		\leq   C\eps_0^2 \frac{C_{*}^{m}(m!)^\tau }{(m+1)^2} ,
	\end{aligned}
\end{equation*}
where the last inequality uses the facts that $p! q! \le (p+q)!$ and $\tau \ge 1$.  
This, together  with \eqref{j2+},  yields
\begin{equation*}\label{uppj2}
\mathcal{J}_2 \leq  	C  \eps_0^2\frac{C_{*}^{m} (m!)^\tau}{(m+1)^2}.
\end{equation*}

\smallskip
\noindent
\underline{\it Estimate on  $\mathcal J_3$}. It remains to estimate  $\mathcal J_3$  in \eqref{J1j2j3}.  Repeating the computations in \eqref{tecest1} and \eqref{tecest2}, we obtain
\begin{equation*}
\begin{aligned}
& \mathcal{J}_3  =\int_{\mathbb{Z}^3}\left(\int_{1}^{T}\big|\big (\hat{\mathcal T}(\widehat{H^{m}f},\ \hat{f}, \ \mu^{\frac12}), (v_jv_l-1)\mu^{\frac12}\big)_{L^2_v}\big|dt\right)^{\frac12}d\Sigma(k)\\
& \leq   C \Big( \int_{\mathbb{Z}^3} \sup\limits_{1\leq t\leq T}\norm{\widehat{H^{m}f}(t,k)}_{L^2_v}d\Sigma(k)\Big)  \int_{\mathbb{Z}^3}\Big[\int_{1}^{T}\normm{\hat f(t,k)} ^{2}dt \Big]^{1\over2}d\Sigma(k)\\
& \leq   C  \eps_0 \int_{\mathbb{Z}^3} \sup\limits_{1\leq t\leq T}\norm{\widehat{H^{m}f}(t,k)}_{L^2_v}d\Sigma(k).
\end{aligned}
\end{equation*}
Combining the bounds for $\mathcal{J}_1$, $\mathcal{J}_2$, and $\mathcal{J}_3$ with \eqref{uppboun}, we obtain assertion \eqref{4.2} in Lemma~\ref{lem: ma}.

It remains to prove assertion \eqref{4.2a}, which is a special case of \eqref{4.2a}. Recall the linear operator $\mathcal{L}$ defined in \eqref{colli}:
	\begin{equation}\label{linpart}
	\mathcal L f =\Gamma( \mu^{\frac12},f)+\Gamma(f,\mu^{\frac12}) =\mathcal T(\mu^{\frac12}, f, \mu^{\frac12})+\mathcal T(f,\mu^{\frac12}, \mu^{\frac12}).		
	\end{equation}
 By Leibniz's formula,
  \begin{equation}\label{lastlabe}
\begin{aligned}
[H^{m},\ \mathcal{L}  ] f=&\sum\limits_{n=1}^{m}\sum\limits_{p=0}^{n} \binom{m}{n} \binom{n}{p}\mathcal{T}(H^{n-p}\mu^{\frac12}, H^{m-n}f, H^{p}\mu^{\frac12})\\
&\qquad+\sum\limits_{n=1}^{  m }\sum\limits_{p=0}^{ n } \binom{m}{n} \binom{n}{p}\mathcal{T}(H^{m-n}f, H^{n-p}\mu^{\frac12}, H^{p}\mu^{\frac12})\\
 \stackrel{\rm def}{=}& R_1(f)+R_2(f).
   \end{aligned}
   \end{equation}
As in \eqref{uppboun}, we write
\begin{equation*}\label{r2est}
\begin{aligned}
&	\int_{\mathbb{Z}^3}\left(\int_{1}^{T}\big|\big(\mathscr{F}_x\big (R_2(f)\big ), (v_jv_l-1)\mu^{\frac12}\big)_{L^2_v}\big|^2dt\right)^{\frac12}d\Sigma(k)\\
& \leq \int_{\mathbb{Z}^3}\sum\limits_{n=1}^{m}\sum\limits_{p=0}^n\binom{m}{n}\binom{n}{p} \\
&\qquad\qquad\quad \times \bigg[\int_{1}^{T}\big|\big(\hat{\mathcal{T}}(\widehat{H^{m-n}f},  \partial_{v_1}^{n-p}\mu^{1\over2}, \partial_{v_1}^{p}\mu^{1\over2}),  (v_jv_l-1)\mu^{\frac12}\big)_{L^2_v}\big|^2 dt\bigg]^{1\over2}d\Sigma(k).
\end{aligned}
\end{equation*}
A direct verification shows that
\begin{align*}
\forall\ p\geq 0,\quad   \big|(1+|v|^{2+\gamma}-\Delta_v)  \partial_{v_1}^{p}  \mu^{\frac12}\big| \leq C
2^pp!  \mu^{\frac18}.
\end{align*}
This, with \eqref{mu}, enables to use  \eqref{trisole}  with $g=\mu^{\frac12}$ and $\omega=\partial_{v_1}^{p}\mu^{\frac12}$, to obtain
\begin{align*}
	&\sum\limits_{n=1}^{m}\sum\limits_{p=0}^n\binom{m}{n}\binom{n}{p} \big|\big(\hat{\mathcal{T}}(\widehat{H^{m-n}f},  \partial_{v_1}^{n-p}\mu^{1\over2}, \partial_{v_1}^{p}\mu^{1\over2}), (v_jv_l-1)\mu^{\frac12}\big)_{L^2_v}\big| \\
	&\leq C \sum\limits_{n=1}^{m}\sum\limits_{p=0}^n\binom{m}{n}\binom{n}{p} 2^pp! \times\big[ 2^{n-p}(n-p)! \big]  		 \norm{\widehat{H^{m-n}f}}_{L^2_v}           \\
	&\leq  C \sum \limits_{n=1}^{m}\frac{m!}{(m-n)!} (n+1)2^n\norm{\widehat{H^{m-n}f}}_{L^2_v}.    
	\end{align*}
Thus, 
\begin{equation*}
	\begin{aligned}
	& \int_{\mathbb{Z}^3}\left(\int_{1}^{T}\big|\big(\mathscr{F}_x\big (R_2(f)\big ), (v_jv_l-1)\mu^{\frac12}\big)_{L^2_v}\big|^2dt\right)^{\frac12}d\Sigma(k)	\\
	&\leq C \int_{\mathbb{Z}^3}\left(\int_{1}^{T}\Big| \sum \limits_{n=1}^{m}\frac{m!}{(m-n)!} (n+1)2^n\norm{\widehat{H^{m-n}f}}_{L^2_v}\Big|^2dt\right)^{\frac12}d\Sigma(k)\\
	&\leq  C \sum \limits_{n=1}^{m}\frac{m!}{(m-n)!} (n+1)2^n \int_{\mathbb{Z}^3}  \left(\int_{1}^{T} \norm{\widehat{H^{m-n}f}}_{L^2_v} ^2dt\right)^{\frac12}d\Sigma(k)\\
	&\leq  C \sum \limits_{n=1}^{m}\frac{m!}{(m-n)!} (n+1)2^n \frac{\eps_0 C_*^{m-n}[(m-n)!]^\tau}{(m-n+1)^2} \leq C \eps_0\frac{C_*^{ m-1 } (m!)^\tau}{(m+1)^2}. 
	\end{aligned}
\end{equation*}
Similarly, using \eqref{tretmate} instead of \eqref{trisole}, the same estimate holds for $R_1(f)$.   Thus   assertion \eqref{4.2a} in Lemma  \ref{lem: ma} follows by observing 
\begin{multline*}
	\int_{\mathbb{Z}^3}\left(\int_{1}^{T}\big|\big(\mathscr{F}_x([H^{m},\  \mathcal{L}]f), \ (v_jv_l-1)\mu^{\frac12}\big)_{L^2_v}\big|dt\right)^{\frac12}d\Sigma(k)\\\leq \sum_{r=1}^2 \int_{\mathbb{Z}^3}\left(\int_{1}^{T}\big|\big(\mathscr{F}_x\big (R_r(f)\big ), (v_jv_l-1)\mu^{\frac12}\big)_{L^2_v}\big|dt\right)^{\frac12}d\Sigma(k)
\end{multline*}
due to \eqref{lastlabe}.    This completes the proof. 
\end{proof}
 
 \begin{lemma} \label{ma}
 Under the hypothesis of Proposition \ref{p3},   there exists a constant $C>0$  independent of $T, m, C_*, \eps_0$, such that for each $1\leq j, l\leq 3$, we have 
 	\begin{equation*} \label{r1}
 	\begin{aligned}
 	&\int_{\mathbb Z^3}\bigg(\int_1^{T}|\Theta_{jl}(\widehat{G}_m)|^2dt\bigg)^{\frac12}d\Sigma(k)+\int_{\mathbb Z^3}\bigg(\int_1^{T}|\Lambda_j (	\widehat{G}_m)|^2 dt\bigg)^{\frac12}d\Sigma(k)\\
 	& \leq  C	\int_{\mathbb{Z}^3}\bigg( \int_1^{T}	\normm{ \{\mathbf{I}-\mathbf{P} \}\widehat{H^mf}}^2dt \bigg)^{\frac12}d\Sigma(k)+
 	 C  \eps_0 \int_{\mathbb{Z}^3} \sup\limits_{1\leq t\leq T}\norm{\widehat{H^{m}f}}_{L^2_v}d\Sigma(k)\\
 	&\quad+C \eps_0  \int_{\mathbb{Z}^3}\bigg[ \int_{1}^{T} \normm{\widehat{H^{m}f}  (t,k)}^2     dt \bigg]^{1\over2}d\Sigma(k)
 	+C  \big(\eps_0  +C_*^{-1}\big)\frac{  \eps_0C_*^m  (m!)^{\tau}}{(m+1)^2}.
 	\end{aligned}
 	\end{equation*}
 	Recall $G_m$ is given in \eqref{defg}. 
 \end{lemma}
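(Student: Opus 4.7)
The plan is to decompose $G_m$ according to its definition \eqref{defg} as the sum of three pieces, namely $\mathcal{L}\{\mathbf{I}-\mathbf{P}\}H^m f$, $H^m\Gamma(f,f)$ and $[H^m,\mathcal{L}]f$, and to estimate each contribution to $\Theta_{jl}(\widehat{G}_m)$ and $\Lambda_j(\widehat{G}_m)$ separately, since $\Theta_{jl}$ and $\Lambda_j$ depend linearly on their arguments. The three contributions will produce respectively the triple-norm term, the two $\eps_0$-terms, and (together with the $\eps_0^2$ contribution from the nonlinearity) the final $\bigl(\eps_0+C_*^{-1}\bigr)$-factor on the right-hand side.

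For the linear contribution $\mathcal{L}\{\mathbf{I}-\mathbf{P}\}H^m f$, I would use the representation \eqref{linpart} of $\mathcal{L}$ as a sum of two $\mathcal{T}$-terms and then regard the test functions $(v_jv_l-1)\mu^{\frac12}$ and $\frac{1}{10}(|v|^2-5)v_j\mu^{\frac12}$ as the ``$\omega$'' in \eqref{ketres}, noting that both are bounded by a constant multiple of $\mu^{\frac14}$ so that \eqref{contild} is satisfied. Since $\mu^{\frac12}$ itself satisfies \eqref{contild+++} with constants $L_0=O(1)$, applying \eqref{trisole} and \eqref{tretmate} pointwise in $k$ yields
\begin{equation*}
\bigl|\Theta_{jl}\bigl(\mathscr{F}_x\mathcal{L}\{\mathbf{I}-\mathbf{P}\}H^mf\bigr)\bigr|+\bigl|\Lambda_j\bigl(\mathscr{F}_x\mathcal{L}\{\mathbf{I}-\mathbf{P}\}H^mf\bigr)\bigr|\le C\normm{\{\mathbf{I}-\mathbf{P}\}\widehat{H^mf}},
\end{equation*}
after which squaring, integrating in $t\in[1,T]$, taking the square root, and summing over $k\in\mathbb{Z}^3$ immediately produces the triple-norm term on the right-hand side of the stated inequality.

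For the nonlinear contribution $H^m\Gamma(f,f)$, estimate \eqref{4.2} from Lemma \ref{lem: ma} gives precisely the required bound when the test function is $(v_jv_l-1)\mu^{\frac12}$. The proof of \eqref{4.2} only uses that this test function satisfies the same pointwise control $\le C\mu^{\frac14}$ needed to apply \eqref{ketres}; the alternative weight $\frac{1}{10}(|v|^2-5)v_j\mu^{\frac12}$ obeys an identical bound, so repeating verbatim the decomposition \eqref{uppboun}--\eqref{J1j2j3} and the $\mathcal{J}_1,\mathcal{J}_2,\mathcal{J}_3$ analysis yields the same estimate for $\Lambda_j\bigl(\mathscr{F}_x H^m\Gamma(f,f)\bigr)$. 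Similarly, the commutator estimate \eqref{4.2a} applies verbatim to the $\Lambda_j$-projection after the same harmless substitution of weight, so both $\Theta_{jl}$ and $\Lambda_j$ applied to $\mathscr{F}_x[H^m,\mathcal{L}]f$ are controlled by $\frac{C}{C_*}\cdot\frac{\eps_0 C_*^m(m!)^\tau}{(m+1)^2}$.

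Summing the three contributions and absorbing the $\eps_0^2$ term from the nonlinear bound together with the $C_*^{-1}\eps_0$ term from the commutator into the single factor $C(\eps_0+C_*^{-1})\frac{\eps_0 C_*^m(m!)^\tau}{(m+1)^2}$ completes the estimate. The only delicate point, and the one I would check with care, is the verification that the $\Lambda_j$-analogues of \eqref{4.2} and \eqref{4.2a} go through without modification: this reduces to checking that the alternative weight $\frac{1}{10}(|v|^2-5)v_j\mu^{\frac12}$ falls into the same class of Schwartz functions (and, in the commutator case, that its derivatives are controlled by $\mu^{\frac18}$ up to polynomial factors) so that the trilinear estimates \eqref{ketres}, \eqref{trisole}, \eqref{tretmate} apply with constants of the same order. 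Once this is granted, no new combinatorial work is needed and Lemma \ref{ma} follows.
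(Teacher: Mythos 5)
Your proposal is correct and follows essentially the same route as the paper: split $G_m$ into its three pieces, bound the linear piece by $C\normm{\{\mathbf{I}-\mathbf{P}\}\widehat{H^mf}}$ via the trilinear estimate applied to the two $\mathcal T$-terms of \eqref{linpart}, invoke \eqref{4.2} and \eqref{4.2a} of Lemma \ref{lem: ma} for the nonlinear and commutator pieces, and treat the $\Lambda_j$-moment by the same argument with the weight $\frac{1}{10}(|v|^2-5)v_j\mu^{\frac12}$. The only cosmetic slip is calling the test function the ``$\omega$'' of \eqref{ketres} (there $\omega$ is the third slot of $\mathcal T$, while the moment weight is the function paired in the $L^2_v$ inner product), but this does not affect the argument.
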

 
 \begin{proof}
 		Recall that 
 	\begin{eqnarray*}
 		G_m=\mathcal{L}\{\mathbf{I}-\mathbf{P} \}H^mf+
 		H^m\Gamma(f,f)-[H^m,\mathcal{L}]f.
 	\end{eqnarray*}
 	Thus, in view of \eqref{thetalamb}, we have
 	\begin{equation}\label{thgm}
 	\begin{aligned}
 		&|\Theta_{jl}(\widehat{G}_m)|\leq   \big| \big(\mathcal{L}\{\mathbf{I}-\mathbf{P} \}\widehat{H^mf}, (v_j v_l-1)\mu^{\frac12}\big)_{L_v^2}\big | 
 		\\
 		&\quad+\Big| \Big(\mathscr{F}_x\big (H^m \Gamma(f,f)\big),  (v_j v_l-1)\mu^{\frac12}\Big)_{L_v^2}\Big |+\Big| \Big(\mathscr{F}_x([H^m,\mathcal{L}]f), (v_j v_l-1)\mu^{\frac12}\Big)_{L_v^2}\Big|.
 		\end{aligned}
 	\end{equation}
Using \eqref{linpart} and applying \eqref{trin}, we obtain for each $1 \leq j,l \leq 3$:
 		\begin{equation*}
 		\big| \big(\mathcal{L}\{\mathbf{I}-\mathbf{P} \}\widehat{H^mf}, (v_j v_l-1)\mu^{\frac12}\big)_{L_v^2}\big |
 		\leq C	\normm{  \{\mathbf{I}-\mathbf{P} \}\widehat{H^mf}},
 		\end{equation*}
and consequently,
 			\begin{equation*} \label{4.1}
 			\begin{aligned}
 			&\int_{\mathbb{Z}^3} \bigg[ \int_1^{T}\big| \big(\mathcal{L}\{\mathbf{I}-\mathbf{P} \}\widehat{H^mf}, (v_j v_l-1)\mu^{\frac12}\big)_{L_v^2}\big |^2 dt\bigg]^{1\over2}d\Sigma(k)\\
 			&	\leq C	\int_{\mathbb{Z}^3}\bigg( \int_1^{T}	\normm{  \{\mathbf{I}-\mathbf{P} \}\widehat{H^mf}(t,k)}^2dt \bigg)^{1\over2}d\Sigma(k).
 			\end{aligned}
 			\end{equation*}
 	Combining this with estimates \eqref{4.2} and \eqref{4.2a} in Lemma~\ref{lem: ma} and \eqref{thgm}, we deduce that
 \begin{equation} \label{r2}
 \begin{aligned}
 &\int_{\mathbb Z^3}\bigg(\int_1^{T}|\Theta_{jl}(\widehat{G}_m)|^2dt\bigg)^{\frac12}d\Sigma(k)\\
 &\leq C	\int_{\mathbb{Z}^3}\bigg( \int_1^{T}	\normm{ \{\mathbf{I}-\mathbf{P} \}\widehat{H^mf}}^2dt \bigg)^{\frac12}d\Sigma(k)+
 C\eps_0 \int_{\mathbb{Z}^3} \sup_{1\leq t\leq T}\norm{\widehat{H^mf}}_{L^2_v}d\Sigma(k)\\
 &\quad +C\eps_0  \int_{\mathbb{Z}^3} \bigg( \int_1^{T} \normm{\widehat{H^mf}}^2dt  \bigg)^{\frac12}d\Sigma(k)
 +C  \big(\eps_0+C_*^{-1}\big)  \frac{ \eps_0C_*^m  (m!)^{\tau}}{(m+1)^2}.
 \end{aligned}
 \end{equation}
A similar upper bound to \eqref{r2} holds for the term \begin{equation*}\int_{\mathbb Z^3}\Big(\int_1^{T}|\Lambda_j (	\widehat{G}_m)|^2dt\Big)^{\frac12}d\Sigma(k),\end{equation*}
which can be established by analogous arguments. This completes the proof of Lemma~\ref{ma}.
 \end{proof}

 \begin{lemma} \label{rem4}
  Under the hypothesis of Proposition \ref{p3},   there exists a constant $C>0$  independent of $T, m, C_*, \eps_0$, such that  
 	\begin{eqnarray*}
 		\begin{aligned}
 		&\int_{\mathbb Z^3}\bigg(\int_1^{T}\Big|\int_{\mathbb R^3}		 \phi(v) \widehat{G}_m dv \Big|^2dt\bigg)^{\frac12}d\Sigma(k)\\
 		&\leq C\eps_0 \int_{\mathbb{Z}^3} \sup_{1\leq t\leq T}\norm{\widehat{H^mf}}_{L^2_v}d\Sigma(k)  +C\eps_0  \int_{\mathbb{Z}^3} \bigg( \int_1^{T} \normm{\widehat{H^mf}}^2dt  \bigg)^{\frac12}d\Sigma(k)\\
 &\quad+C  \big(\eps_0+C_*^{-1}\big)  \frac{ \eps_0C_*^m  (m!)^{\tau}}{(m+1)^2},
 	\end{aligned}
 	\end{eqnarray*}
 recalling 	$\phi(v)$ denotes any one of the functions $\mu^{\frac12},~v\mu^{\frac12}$	and $\frac{1}{6}(|v|^2-3)\mu^{\frac12}.$
 \end{lemma}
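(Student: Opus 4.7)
The proof proceeds by decomposing $\widehat G_m$ according to its definition \eqref{defg} and handling each piece separately; the decisive input is that every candidate for $\phi(v)$ lies in the null space $\mathcal N = \ker\mathcal L$.

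First, since $\phi \in \{\mu^{1/2},\, v\mu^{1/2},\, \tfrac{1}{6}(|v|^2-3)\mu^{1/2}\} \subset \mathcal N$ and $\mathcal L$ is self-adjoint on $L^2_v$ with $\mathcal L\phi = 0$, the linear collision contribution to $G_m$ vanishes pointwise in $(t,k)$:
\begin{equation*}
\big(\mathcal L\{\mathbf I-\mathbf P\}\widehat{H^m f},\ \phi\big)_{L^2_v} = \big(\{\mathbf I-\mathbf P\}\widehat{H^m f},\ \mathcal L\phi\big)_{L^2_v} = 0.
\end{equation*}
Consequently only the two terms
\begin{equation*}
\Big(\mathscr F_x\!\big(H^m\Gamma(f,f)\big),\,\phi\Big)_{L^2_v}\quad\text{and}\quad \Big(\mathscr F_x\!\big([H^m,\mathcal L]f\big),\,\phi\Big)_{L^2_v}
\end{equation*}
need to be estimated.

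Next, observe that each admissible $\phi$ is a Schwartz function satisfying the pointwise bounds $|\phi(v)|\leq C\mu^{1/4}(v)$ (which is exactly hypothesis \eqref{contild}) and $|(1+|v|^{2+\gamma}-\Delta_v)\phi(v)|\leq C\mu^{1/8}(v)$ (which is exactly hypothesis \eqref{contild+++} for $m=0$). These are precisely the properties that were used of the test function $(v_j v_l - 1)\mu^{1/2}$ in the proof of Lemma \ref{lem: ma}. Therefore the argument developed there applies verbatim with $(v_j v_l - 1)\mu^{1/2}$ replaced by $\phi$: the nonlinear term is split as in \eqref{J1j2j3} into the three pieces $\mathcal J_1,\mathcal J_2,\mathcal J_3$, and each is bounded via the trilinear estimates \eqref{ketres}, \eqref{trisole} together with the inductive assumption \eqref{indassum}, reproducing the bound \eqref{4.2}. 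Similarly, the commutator $[H^m,\mathcal L]f$ is decomposed as in \eqref{lastlabe} into $R_1(f)+R_2(f)$; the estimates \eqref{trisole} and \eqref{tretmate}, combined with the factorial bound $|\partial_{v_1}^p\mu^{1/2}|\leq 2^p p!\mu^{1/4}$, deliver the analogue of \eqref{4.2a}.

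Combining the vanishing of the linear contribution with the two bounds just described yields the asserted estimate. No step presents any genuine obstacle beyond careful bookkeeping, since the computations are identical to those already carried out in Lemma \ref{lem: ma}; the only real point is the algebraic observation that $\mathcal L\phi = 0$ eliminates the first, otherwise unwanted, component of $G_m$. This is exactly the reason the macroscopic estimate \eqref{4b} is closed only for test functions in $\mathcal N$.
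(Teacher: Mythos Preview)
Your proposal is correct and follows essentially the same approach as the paper: you use $\phi\in\ker\mathcal L$ and self-adjointness of $\mathcal L$ to kill the linear term $\mathcal L\{\mathbf I-\mathbf P\}H^mf$, then reduce to the argument of Lemma~\ref{lem: ma} for the remaining two pieces. The paper's proof is the same, only more terse; your additional remark that $\phi$ satisfies the structural hypotheses \eqref{contild} and \eqref{contild+++} (just as $(v_jv_l-1)\mu^{1/2}$ does) is a helpful clarification of why Lemma~\ref{lem: ma} transfers directly.
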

 
 \begin{proof}
 Recall $G_m=  \mathcal{L}\{\mathbf{I}-\mathbf{P} \}H^mf+ H^m \Gamma(f,f)- [ H^{m}, \ \mathcal{L}] f.$ 
Note  $\phi\in {\rm ker} \mathcal L,$ and thus
\begin{equation*}
	\int_{\mathbb R^3}		 \phi(v) \mathcal{L}\{\mathbf{I}-\mathbf{P} \}H^mf dv=0.
\end{equation*}
Therefore,
 \begin{equation*}
	\begin{aligned}
 		\Big|\int_{\mathbb R^3}		 \phi(v) \widehat{G}_m dv \Big| \leq   
 		 \big| \big(\mathscr{F}_x\big (H^m \Gamma(f,f)\big), \   \phi(v)\big)_{L_v^2}\big | 
 		 +\big| \big(\mathscr{F}_x([H^m,\mathcal{L}]f), \ \phi(v)\big)_{L_v^2}\big|.
 		\end{aligned}
\end{equation*}
This allows us to follow the same argument as in the proof of Lemma \ref{lem: ma} to obtain the assertion in Lemma \ref{rem4}.
The proof is thus completed.
 \end{proof}

\begin{proof}[Proof of Proposition \ref{p3}]
	We split 
	\begin{eqnarray} \label{4.14}
	\begin{aligned}
	&\int_{\mathbb Z^3}\bigg(\int_1^{T}\big(|\widehat{ a_m }(t,k)|^2+
	|\widehat{b_m}(t,k)|^2+|\widehat{ c_m }(t,k)|^2\big)dt\bigg)^{\frac12}d\Sigma(k)=I_1+I_2 
	\end{aligned}
	\end{eqnarray}
	with
	\begin{equation*}
		\left\{
		\begin{aligned}
		I_1&=\bigg(\int_1^{T}\big( |\widehat{ a_m }(t,0) |^2+
	 |\widehat{b_m}(t,0) |^2+ |\widehat{ c_m }(t,0) |^2\big)dt\bigg)^{\frac12} \\
	I_2&=\int_{k\in\mathbb Z^3\setminus{\{0\}}}\bigg(\int_1^{T}\big( |\widehat{ a_m }(t,k) |^2+
	 |\widehat{b_m}(t,k) |^2+ |\widehat{ c_m }(t,k) |^2\big)dt\bigg)^{\frac12}d\Sigma(k).	
		\end{aligned}
\right.
	\end{equation*}
We proceed to estimate $I_1$ and $I_2$ as follows.

 \underline{\it Estimate on $I_1$.}	
It follows from \eqref{mac} and \eqref{vecM}  that
\begin{equation*}
\begin{aligned}
\widehat{ a_m }(t,0)
=\int_{\mathbb R^3}\mu^{\frac12}\partial^m_{v_1}\hat{f}(t,0,v) dv 
= (-1)^m \int_{\mathbb R^3} (\partial^m_{v_1}\mu^{\frac12})\hat{f}(t,0,v) dv,
\end{aligned}
\end{equation*}
which with \eqref{mu} yields that 
\begin{equation*}
|\widehat{ a_m }(t,0)|\leq \norm{\partial^m_{v_1}\mu^{\frac12}}_{L^2_v}\norm{\hat{f}(t,0)}_{L^2_v}
\leq C 2^{m} m! \norm{\hat{f}(t,0)}_{L^2_v}.
\end{equation*}
On the other hand, by \eqref{longtime},  it holds that 
\begin{equation*}
\norm{\hat{f}(t,0)}_{L^2_v}\leq  \int_{\mathbb Z^3}\norm{\hat f(t,k)}_{ L^2_v}d\Sigma(k) \leq C e^{-A t}\norm{f_0}_{L^1_k L^2_v}
\end{equation*}
for some constant $A>0$.  Combining these estimates and using the smallness assumption \eqref{smallass}, and recalling $\eps_0>\epsilon>0$     in   Proposition \ref{prp:ve}, we obtain 
\begin{equation*}
|\widehat{ a_m }(t,0)|
 \leq C 2^{m} m!\, e^{-A t}\norm{f_0}_{L^1_k L^2_v} \leq C\epsilon 2^{m+1} e^{-A t}  m! \leq C\eps_0 2^{m+1} e^{-A t}  m! ,
\end{equation*}
and similarly for $|\widehat{b_m}(t,0)|$ and $|\widehat{ c_m }(t,0)|.$   
Therefore, for any $m\geq 1,$  
	\begin{eqnarray} \label{4.15}
		\begin{aligned}
			I_1=&\bigg(\int_1^{T}\big( |\widehat{ a_m }(t,0) |^2+
			 |\widehat{b_m}(t,0) |^2+ |\widehat{ c_m }(t,0) |^2\big)dt\bigg)^{\frac12} \\
			\leq & C\eps_0 2^{m+1}(m!)^{\tau}  \leq
			C\eps_0\frac{ C_*^{m-1}~ (m!)^{\tau}}{(m+1)^2},
		\end{aligned}
	\end{eqnarray}
	provided  $C_*\geq 4$. 
	
	 \underline{\it Estimate on $I_2$.} For each multi-index  $k\neq 0,$  using the fact that 
  $\frac{|k|^2}{1+|k|^2}\geq \frac{1}{2}$ and Lemma \ref{l5.2a}, we obtain 
	\begin{align*}
			&\frac12 \int_1^{T}\big( |\widehat{ a_m }(t,k) |^2+
			 |\widehat{b_m} (t,k)|^2+ |\widehat{ c_m }(t,k) |^2\big)dt \\
			&\leq 2 \sup_{1\leq t\leq T} \big| \mathcal{K}_m(\hat{ f}(t,k))\big| +C
			\int_1^{T}\normm{\{\mathbf{I}-\mathbf{P}\}	\widehat{H^mf}(t,k)}^2dt
			\\
			& +C\sum_{j,l}\int_1^{T}|\Theta_{jl}(	\widehat{G}_m)|^2dt+C\sum_{j}\int_1^{T}|\Lambda_j (	\widehat{G}_m)|^2dt+C\int_1^{T}\Big|\int 		\phi(v)\widehat{G}_m  dv \Big|^2dt,
		\end{align*}
	where  $\phi(v)$ denotes one of the functions $\mu^{\frac12},~v\mu^{\frac12}$	and $\frac{1}{6}(|v|^2-3)\mu^{\frac12}$. From the representation \eqref{if} of $\mathcal{K}_m(\hat{ f}(t,k))$,  one has that, for any $t>0$ and any $k\in\mathbb Z^3,$ 
	\begin{equation} \label{km}
		\begin{aligned}
			 |\mathcal{K}_m(\hat{ f}(t,k))|&\leq C \big (|\widehat{ a_m }(t,k)|^2+|\widehat{b_m}(t,k)|^2+|\widehat{ c_m }(t,k)|^2\big)\\
			&\qquad+	C\sum_{j,l}\big|\Theta_{jl}\big (\{\mathbf{I}-\mathbf{P}\}\widehat{H^m f}\big)\big|^2+C\sum_{j}\big |\Lambda_j\big (\{\mathbf{I}-\mathbf{P}\}\widehat{H^mf}
			\big)\big |^2\\
			& \leq C\norm{\mathbf{P} \widehat{H^m f}}^2_{L^2_v}+C\norm{\{\mathbf{I}-\mathbf{P}\} \widehat{H^m f}}^2_{L^2_v} \leq C \norm{ \widehat{H^m f}}^2_{L^2_v},
		\end{aligned}
	\end{equation}
	where the second inequality follows from \eqref{project} and \eqref{thetalamb}.
Combining the two estimates above, we get
\begin{eqnarray*} 
		\begin{aligned}
			 &I_2 =\int_{k\neq 0}\bigg(\int_1^{T}\big( |\widehat{ a_m }(t,k) |^2+
	 |\widehat{b_m}(t,k) |^2+ |\widehat{ c_m }(t,k) |^2\big)dt\bigg)^{\frac12}d\Sigma(k)\\
		&	\leq C	\int_{\mathbb{Z}^3}\bigg( \int_1^{T}	\normm{ \{\mathbf{I}-\mathbf{P} \}\widehat{H^mf}}^2dt \bigg)^{\frac12}d\Sigma(k) +C \int_{\mathbb{Z}^3} \sup_{1\leq t\leq T} \norm{\widehat{H^mf}}_{L^2_v}d\Sigma(k)\\
			&\   +C\sum_{j,l}\int_{\mathbb Z^3}\Big (\int_1^{T}|\Theta_{jl}(\widehat{G}_m)|^2dt\Big)^{\frac12}d\Sigma(k)+C\sum_j\int_{\mathbb Z^3}\Big(\int_1^{T}|\Lambda_j (	\widehat{G}_m)|^2dt\Big)^{\frac12}d\Sigma(k)\\
			&\ \, +C\int_{\mathbb Z^3}\bigg(\int_1^{T}\Big|\int_{\mathbb R^3}		\phi(v) \widehat{G}_m dv \Big|^2dt\bigg)^{\frac12}d\Sigma(k),
		\end{aligned}
	\end{eqnarray*}
which, with 
  Lemma \ref{ma} and  \ref{rem4}, yields that 
\begin{equation*}  
\begin{aligned}
 I_2 
\leq & C	\int_{\mathbb{Z}^3}\bigg( \int_1^{T}	\normm{ \{\mathbf{I}-\mathbf{P} \}\widehat{H^mf}}^2dt \bigg)^{\frac12}d\Sigma(k)+C \int_{\mathbb{Z}^3} \sup_{1\leq t\leq T} \norm{\widehat{H^mf}}_{L^2_v}d\Sigma(k)\\
&+  C\eps_0  \int_{\mathbb{Z}^3} \bigg( \int_1^{T} \normm{\widehat{H^mf}}^2dt  \bigg)^{\frac12}d\Sigma(k)+C  \big(\eps_0+C_*^{-1}\big)  \frac{ \eps_0C_*^m  (m!)^{\tau}}{(m+1)^2}.
\end{aligned}
\end{equation*}
Substituting the above estimate and estimate  \eqref{4.15}   into \eqref{4.14} yields  the assertion in Proposition \ref{p3}.   
This completes the proof.
\end{proof}

\begin{proof}
	[Sketch of the proof of Proposition \ref{+p3+}] The proof is quite similar to that of Proposition \ref{p3}. In fact, applying $\partial_{v_1}^{m} $ to \eqref{3} implies 
	\begin{equation*}
		\begin{aligned}
	\inner{\partial_{t} +v\cdot\partial_{x}  -\mathcal{L}} \partial_{v_1}^{m}  f  = -m\partial_{x_1}\partial_{v_1}^{m-1}f+[ \partial_{v_1}^{m}, \ \mathcal{L}] f + \partial_{v_1}^{m}\Gamma(f, f). 
	\end{aligned}
	\end{equation*} 
 Performing a macro-micro decomposition in the above equation, we obtain an expression analogous to \eqref{5}:
 \begin{equation*}
 	 \partial_t\mathbf{P}\partial_{v_1}^mf+v\cdot\partial_{x}\mathbf{P}\partial_{v_1}^mf=-
 \partial_t\{\mathbf{I}-\mathbf{P} \}\partial_{v_1}^mf+\tilde R_m+\tilde G_m,
 \end{equation*}
 with 
 \begin{eqnarray*}
 	 \tilde R_m=-v\cdot\partial_{x} \{\mathbf{I}-\mathbf{P} \}\partial_{v_1}^mf 
 \end{eqnarray*}
 and
 \begin{equation*}
 	\tilde G_m= \mathcal{L}\{\mathbf{I}-\mathbf{P} \}\partial_{v_1}^mf+ \partial_{v_1}^m \Gamma(f,f)+ [ \partial_{v_1}^{m}, \ \mathcal{L}] f-m\partial_{x_1}\partial_{v_1}^{m-1}f. 
 \end{equation*}
 Compared with 
$G_m$ defined in \eqref{defg},  the only new term to handle in 
$\tilde G_m$ is the last one. To treat it, we rewrite 
\begin{equation*}
	m\partial_{x_1}\partial_{v_1}^{m-1}f=t^{-1} m H\partial_{v_1}^{m-1}f-t^{-1} m  \partial_{v_1}^{m}f,
\end{equation*}
recalling  $H=t\partial_{x_1}+\partial_{v_1}$ is given in \eqref{vecM}.  As a result, 
we use  \eqref{thetalamb} to compute 
\begin{equation*}
\begin{aligned}
	&\big | \Theta_{jl}\big (  \mathscr F_x (m \partial_{x_1}\partial_{v_1}^{m-1}f)\big )\big|\\
	&\leq  t^{-1} m \big |  \big(  \partial_{v_1}^{m-1}\widehat{H f},\ (v_j v_l-1)\mu^{\frac12} \big)_{L_v^2}\big|+t^{-1} m \big |  \big(  \partial_{v_1}^{m}\hat{f},\ (v_j v_l-1)\mu^{\frac12} \big)_{L_v^2}\big|\\
	&\leq  t^{-1} m \big |  \big(   \widehat{H f},\ \partial_{v_1}^{m-1}\big[ (v_j v_l-1)\mu^{\frac12}\big]  \big)_{L_v^2}\big|+t^{-1} m \big |  \big(   \hat{f} ,\ \partial_{v_1}^{m}\big[ (v_j v_l-1)\mu^{\frac12}\big]  \big)_{L_v^2}\big|\\
	&\leq  C t^{-1} 3^{m} m! \Big(   \norm{\widehat {Hf }}_{L_v^2}+ \norm{\hat {f}}_{L_v^2}\Big)\leq  C t^{-1} 3^{m} m! \Big(   \normm{\widehat {Hf }}+ \normm{\hat {f}}\Big),
	\end{aligned}
\end{equation*}
the last line using \eqref{mu}. Thus
\begin{equation*}
	\begin{aligned}
		&\int_{\mathbb Z^3}\bigg(\int_1^{T}\big | \Theta_{jl}\big (  \mathscr F_x (m \partial_{x_1}\partial_{v_1}^{m-1}f)\big )\big|^2dt\bigg)^{\frac12}d\Sigma(k) \\
		&\leq C 3^{m} m! \int_{\mathbb Z^3}\bigg(\int_1^{T}\normm{\widehat{Hf}}^2dt\bigg)^{\frac12}d\Sigma(k)+ C 3^{m} m! \int_{\mathbb Z^3}\bigg(\int_1^{T}\normm{\hat{f}}^2dt\bigg)^{\frac12}d\Sigma(k)\\
		&\leq C 3^{m} m! \int_{\mathbb Z^3}\bigg(\int_1^{T}\normm{\widehat{Hf}}^2dt\bigg)^{\frac12}d\Sigma(k)+ C   \frac{  \eps_0 L^{m-1}  (m!)^{\tau}}{(m+1)^2},
	\end{aligned}
\end{equation*}
 where the last inequality uses \eqref{supp+} and the condition $L\geq 4.$   Analogous estimate   holds for 
 \begin{equation*}
 	\int_{\mathbb Z^3}\bigg(\int_1^{T}\big | \Lambda_{j}\big (  \mathscr F_x (m \partial_{x_1}\partial_{v_1}^{m-1}f)\big )\big|^2dt\bigg)^{\frac12}d\Sigma(k).
 \end{equation*}
With these bounds, we may now follow the same line of argument as in the proof of Proposition \ref{p3} to obtain  the assertion in Proposition \ref{+p3+}. We omit the details for brevity. The proof of  Proposition \ref{+p3+}  is thus completed.  
\end{proof}

In particular, for $m=0$, equation \eqref{5} reduces to 
\begin{equation*}
\partial_t\mathbf{P}f+v\cdot\partial_{x}\mathbf{P} f=-
 \partial_t\{\mathbf{I}-\mathbf{P} \} f -v\cdot\partial_{x} \{\mathbf{I}-\mathbf{P} \}f+\mathcal{L}\{\mathbf{I}-\mathbf{P} \}f+   \Gamma(f,f).
\end{equation*}
Recall  that 
\begin{equation*}
    \mathbf{P}f=\big \{a(t,x)+b(t,x)\cdot v+c(t,x)(|v|^2-3)\big \}\mu^{\frac12}.
\end{equation*}
 Then repeating the proof of Proposition \ref{p3}
we have the following  result. 

\begin{proposition}\label{macmic}
   Assume the hypothesis of Theorem \ref{gxt} holds.
    Then there exists a constant $C$, depending only on the constant $C_0,C_1$ in Section \ref{sec:prelim}, 
 such that, for any $k\in\mathbb Z^3,$  
\begin{equation}\label{emm}
	\begin{aligned}
		&\frac14\Big( |\hat { a}(t,k) |^2+
		 |\hat{b}(t,k) |^2+ |\hat{ c }(t,k) |^2  \Big)\\
		& \quad \leq -\frac{d}{dt}{\rm Re} \, \mathcal{K}_0(\hat f)+  C	 	\normm{ \{\mathbf{I}-\mathbf{P} \}\hat{f}}^2    +C \Big|\int_{\mathbb Z^3}   \norm{\hat{f}(t,k-\ell)}_{L_v^2} \normm{\hat{f}(t,\ell)} d\Sigma(\ell)\Big|^2,
	\end{aligned}
\end{equation}
where $ a ,b=(b_1,b_2,b_3)$ and $ c$ are defined in \eqref{eqorp1} and
\begin{equation*}
    \begin{aligned}
 	 \mathcal{K}_0  \big  (\hat{ f}\big )=& \frac{1}{1+|k|^2}\!\sum_{j,l=1}^3\!\Big(i k_j\widehat{ b_{l}} +ik_l\widehat{b_{j}}\ \big|\ \Theta_{jl}\big(\{\mathbf{I}-\mathbf{P}\}\hat{f}\big) + 2\delta_{jl}\widehat{ c }\Big)\\
 	& +\frac{\rho_0}{1+|k|^2}\sum_{j=1}^3 \Big(	
 	 \Lambda_j \big(\{\mathbf{I}-\mathbf{P}\}\hat{f}\big)\ \big |\ i k_j\hat{ c } \Big)  + \frac{1}{1+|k|^2}\sum_{j=1}^3 \Big(	\widehat{b_{j}}\  \big | \ i k_j \hat { a } \Big).
 	\end{aligned}
\end{equation*}
\end{proposition}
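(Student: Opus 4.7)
The plan is to specialize Lemma~\ref{l5.2a} to $m=0$ (where $H^m = I$), keeping all estimates in pointwise-in-$(t,k)$ form rather than integrating and taking $\ell^1_k$ norms as in the proof of Proposition~\ref{p3}. First I would derive the fluid-type system for $(a,b,c)$ by testing the equation displayed just above the proposition against the five moments $\mu^{1/2}$, $v\mu^{1/2}$, $\tfrac16(|v|^2-3)\mu^{1/2}$, $(v_jv_l-1)\mu^{1/2}$, $\tfrac{1}{10}(|v|^2-5)v_j\mu^{1/2}$. This yields the exact analogue of system~\eqref{fys} with forcing
\begin{equation*}
G_0 := \mathcal{L}\{\mathbf{I}-\mathbf{P}\}f + \Gamma(f,f),
\end{equation*}
which is simpler than $G_m$ because the commutator $[H^m,\mathcal{L}]f$ is absent when $m=0$.

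Next I would replay the three sub-estimates \eqref{4.05}, \eqref{4.10a}, \eqref{4.13} for $(\hat a,\hat b,\hat c)$ in place of $(\hat a_m,\hat b_m,\hat c_m)$, each weighted by $(1+|k|^2)^{-1}$ so as to reproduce the definition~\eqref{if} of $\mathcal{K}_0$. Summing them with a suitably large $\rho_0 \geq 1$ gives, pointwise in $(t,k)$,
\begin{equation*}
\partial_t\,\mathrm{Re}\,\mathcal{K}_0(\hat f) + \tfrac12\,\tfrac{|k|^2}{1+|k|^2}\bigl(|\hat a|^2+|\hat b|^2+|\hat c|^2\bigr) \leq C\normm{\{\mathbf{I}-\mathbf{P}\}\hat f}^2 + C\,\mathfrak{R}(\hat G_0),
\end{equation*}
where $\mathfrak{R}(\hat G_0)$ collects the squared moments $|\Theta_{jl}(\hat G_0)|^2$, $|\Lambda_j(\hat G_0)|^2$, and $|(\hat G_0,\phi)_{L^2_v}|^2$ with $\phi \in \{\mu^{1/2}, v\mu^{1/2}, \tfrac16(|v|^2-3)\mu^{1/2}\}$.

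The remaining step, and the only real departure from the $m\geq 1$ analysis, is the control of $\mathfrak{R}(\hat G_0)$. The linear part $\mathcal{L}\{\mathbf{I}-\mathbf{P}\}f$, decomposed via~\eqref{linpart}, contributes $\mathcal{O}(\normm{\{\mathbf{I}-\mathbf{P}\}\hat f}^2)$ by the trilinear estimate~\eqref{trin} applied with polynomial-times-$\mu^{1/2}$ test functions, and is absorbed into the first term on the right. For the nonlinear part $\hat\Gamma(\hat f,\hat f)$, I would invoke~\eqref{upptrifour} directly; pairing against such a test function $\phi$ yields
\begin{equation*}
\bigl|(\hat\Gamma(\hat f,\hat f),\phi)_{L^2_v}\bigr| \leq C\int_{\mathbb Z^3}\|\hat f(t,k-\ell)\|_{L^2_v}\,\normm{\hat f(t,\ell)}\,d\Sigma(\ell),
\end{equation*}
whose square is precisely the convolution term on the right-hand side of~\eqref{emm}; the same bound applies to $\Theta_{jl}(\widehat{\Gamma(f,f)})$ and $\Lambda_j(\widehat{\Gamma(f,f)})$, since these are inner products with different polynomial-times-$\mu^{1/2}$ test functions. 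Crucially, no Minkowski--Fubini step \eqref{MF} is required here, because the estimate is pointwise in $k$ and no induction on $m$ is involved.

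Finally, the $\tfrac14$ coefficient on the left of~\eqref{emm} follows from $\tfrac{|k|^2}{1+|k|^2}\geq \tfrac12$ for $k\neq 0$; the case $k=0$ is trivial, since the conservation laws~\eqref{law} force $\hat a(t,0) = \hat b(t,0) = \hat c(t,0) = 0$ while simultaneously $\mathcal{K}_0(\hat f(t,0)) = 0$ because every term in~\eqref{if} contains a factor $ik_j$. I do not anticipate any serious obstacle; the only technical care needed is verifying that the polynomial-times-$\mu^{1/2}$ test functions satisfy the pointwise decay~\eqref{contild} so that the Fourier-side trilinear estimates apply cleanly, which is immediate.
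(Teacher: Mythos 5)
Your proposal is correct and follows essentially the same route as the paper, which proves the proposition precisely by rerunning Lemmas \ref{l5.2a}, \ref{ma} and \ref{rem4} with $m=0$ (so that the forcing reduces to $\mathcal{L}\{\mathbf{I}-\mathbf{P}\}f+\Gamma(f,f)$, the commutator being absent), bounding the nonlinear moments pointwise via \eqref{upptrifour}, and disposing of $k=0$ by the vanishing of $\hat a,\hat b,\hat c$ and $\mathcal{K}_0$ at $k=0$. Your write-up merely spells out the details the paper leaves implicit; no gap.
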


\begin{proof}
 The validity of estimate \eqref{emm} for    $k=0$ follows from the fact that 
 \begin{equation*}
     \hat { a}(t,0)=\widehat { b_j}(t,0)=\hat { c}(t,0)=0,\quad 1\leq j\leq 3 
 \end{equation*}
 and
 \begin{equation*}
    \mathcal{K}_0  \big  (\hat{ f}(t,0)\big )=0.
    \end{equation*}
 For $k\neq 0,$   estimate \eqref{emm}  just follows from a similar argument to that of Lemmas \ref{l5.2a}, \ref{ma} and  \ref{rem4} as well as \eqref{upptrifour}.    
\end{proof}

  \section{Global-in-time estimate on radius}\label{sec:global}
 In this section, we prove Theorem \ref{gxt} and establish the uniform-in-time radius estimate. The proof is divided into two parts, corresponding to estimates \eqref{opa} and \eqref{opg}.

 \subsection{Radius estimate in sharp regularity space}

This part proves \eqref{opa} in Theorem \ref{gxt}, relying on the following Propositions \ref{theorem} and \ref{prp:v}.

  \begin{proposition}\label{theorem} Let 
$\tau=\max\big\{1,\frac{1}{2s}\big\}.$  Suppose the hypothesis of Theorem \ref{gxt} is fulfilled.      Then  there exist  two sufficiently small constants $0<\epsilon<\eps_0$ and a large constant $C_*\geq 4$, with $C_*$ depending only on  the numbers $C_0,C_1$ in  Section \ref{sec:prelim},  such that if the initial datum satisfies that $\norm{f_0}_{L_k^1L_v^2}\leq \epsilon$,  
   then
  the  estimate
 	\begin{multline}\label{k}
 	\int_{\mathbb{Z}^3}\sup\limits_{ t\geq 1}\norm{\widehat{H^{m}f}(t,k)}_{L^2_v}d\Sigma(k)\\
 	+ \int_{\mathbb{Z}^3}\left(\int_{1}^{+\infty}\normm{  \widehat{H^{m}f}(t,k)}^{2}dt\right)^{1\over2}d\Sigma(k) \leq \frac{\eps_0 C_*^{m}  (m!)^\tau  }{(m+1)^2}
 	\end{multline}
 	holds true for any $m\in\mathbb Z_+$, where $H=t\partial_{x_1}+\partial_{v_1}.$
 	Moreover, estimate \eqref{k} remains valid if $H$ is replaced by
 	$$  t  \partial_{x_i}+ \partial_{v_i},\quad i=2\  \textrm{or} \ 3.$$
\end{proposition}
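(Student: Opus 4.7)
The plan is to establish \eqref{k} by strong induction on $m$, working on the finite interval $[1,T]$ with constants independent of $T>1$, and then passing $T\to+\infty$. The base case $m=0$ is immediate from \eqref{v2} once $C_*$ is chosen large enough to absorb the implicit constant. For $m\geq 1$, assume \eqref{k} holds for all $j\leq m-1$ on $[1,T]$; this guarantees precisely the inductive hypothesis \eqref{indassum} required by Proposition \ref{p3}.

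Applying $H^m$ to \eqref{3} and invoking \eqref{kehigher} to annihilate the transport commutator gives
$$(\partial_t + v\cdot\partial_x - \mathcal{L})H^m f = [H^m,\mathcal{L}]f + H^m\Gamma(f,f).$$
Taking the partial Fourier transform in $x$, pairing with $\widehat{H^m f}$ in $L^2_v$, and integrating in $t$ on $[1,T]$, I use the skew-symmetry of $i v\cdot k$ to kill the transport, and the coercivity \eqref{rela} of $-\mathcal{L}$ to control the diffusion part, obtaining
$$\tfrac12\sup_{1\leq t\leq T}\|\widehat{H^m f}\|_{L^2_v}^2 + C_1\int_1^T\normm{\{\mathbf{I}-\mathbf{P}\}\widehat{H^m f}}^2\,dt \leq \|\widehat{H^m f}(1,k)\|_{L^2_v}^2 + \mathrm{Err}_k,$$
where $\mathrm{Err}_k$ collects the contributions of $[H^m,\mathcal{L}]f$ and $H^m\Gamma(f,f)$. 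After taking the $L^1_k$-type norm and rearranging via Minkowski as in Section \ref{sec:local}, these error terms are handled by Lemma \ref{lem:comestimate} applied with $P=H$ (so $\xi(t)=t$ and $\delta=0$), $\psi=\mathrm{Id}$, and $\tilde{A}=C_*$; the hypothesis $\tilde{A}\geq 4t_2^\delta=4$ reduces to the imposed $C_*\geq 4$.

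The energy estimate produces only the microscopic dissipation $\normm{\{\mathbf{I}-\mathbf{P}\}\widehat{H^m f}}^2$. To recover the full triple-norm quantity $\int_1^T\normm{\widehat{H^m f}}^2\,dt$ appearing on the left of \eqref{k}, I decompose $\normm{\widehat{H^m f}}^2\leq C\bigl(\normm{\{\mathbf{I}-\mathbf{P}\}\widehat{H^m f}}^2 + |\widehat{a_m}|^2 + |\widehat{b_m}|^2 + |\widehat{c_m}|^2\bigr)$, where the macroscopic piece is controlled precisely by Proposition \ref{p3}, producing an acceptable remainder of the form $C(\epsilon_0+C_*^{-1})\epsilon_0 C_*^m(m!)^\tau/(m+1)^2$ together with terms absorbable on the left-hand side. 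The initial value at $t=1$ is handled via Theorem \ref{local-gxt}: since $H|_{t=1}=\partial_{x_1}+\partial_{v_1}$, the binomial expansion $H^m=\sum_j\binom{m}{j}\partial_{x_1}^j\partial_{v_1}^{m-j}$ combined with \eqref{gx++} and the smallness $\epsilon\leq\epsilon_0$ yields $\|\widehat{H^m f}(1,\cdot)\|_{L^1_k L^2_v}\leq \epsilon_0\tilde{N}^m(m!)^\tau/(m+1)^2$ for a suitable constant $\tilde{N}$.

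The main obstacle is the balancing of constants. One must choose $C_*$ large enough (relative to $\tilde{N}$ and the constants arising from Lemma \ref{lem:comestimate} and Proposition \ref{p3}) so that both the initial-data contribution and the Proposition \ref{p3} remainder each occupy at most a small fraction of the target $\epsilon_0 C_*^m(m!)^\tau/(m+1)^2$, while simultaneously $C_*\geq 4$; then $\epsilon_0$ (hence $\epsilon$) is taken small enough that $C(\epsilon_0+C_*^{-1})\leq \tfrac14$, which closes the induction. The preservation of the $(m+1)^{-2}$ decay factor through the various binomial convolutions is automatic, being already embedded in Propositions \ref{prp:ve} and \ref{p3} and Lemma \ref{lem:comestimate} via combinatorial identities such as \eqref{teccom}. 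Passing $T\to+\infty$ yields \eqref{k}, and the argument for $i=2,3$ is identical by symmetry.
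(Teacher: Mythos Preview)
Your proof follows essentially the same strategy as the paper's: induction on $m$ on the interval $[1,T]$, the energy estimate via \eqref{kehigher} and \eqref{rela}, Lemma \ref{lem:comestimate} for the commutator and nonlinear terms, Proposition \ref{p3} to recover the macroscopic part of the dissipation, and then closing by choice of $\eps$, $\eps_0$, $C_*$.

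The one place where you diverge slightly is the treatment of the initial value at $t=1$. The paper does not go through Theorem \ref{local-gxt} and a binomial expansion of $H^m$; it invokes Proposition \ref{prp:ve} directly, noting that at $t=1$ one has $H=\tfrac{1+2s}{2s}P_1-\tfrac{1}{2s}P_2$, so Lemma \ref{lem:fm} converts the bound \eqref{pjm0} on $P_j^m f$ into a bound on $H^m f$ that already carries the prefactor $\eps_0/(m+1)^2$. Your route through \eqref{gx++} is workable but has a small slip: estimate \eqref{gx++} carries no explicit factor of $\epsilon$ or $\eps_0$ in front, so the sentence ``the smallness $\epsilon\leq\eps_0$ yields $\|\widehat{H^m f}(1,\cdot)\|\leq\eps_0\tilde N^m(m!)^\tau/(m+1)^2$'' is not literally correct. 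You can repair this either by citing \eqref{pjm0} instead, or by first fixing $\eps_0$ as a specific small number depending only on $C_0,C_1$ and then absorbing the resulting $1/\eps_0$ into the choice of $C_*$; either way the induction closes exactly as you describe.
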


\begin{proof}
 We proceed by induction on $m$. The case $m = 0$ follows from \eqref{v2}. By \eqref{sgv}, 
  \begin{equation*}
 \forall \ j\geq 0, \quad  	\int_{\mathbb{Z}^3}\sup\limits_{1\leq t \leq T}\norm{\widehat{H^{j}f}}_{L^2_v}d\Sigma(k) + \int_{\mathbb{Z}^3}\left(\int_{1}^{T}\normm{  \widehat{H^{j}f}}^{2}dt\right)^{\frac12}d\Sigma(k)<+\infty.
  \end{equation*}
This  ensures rigorous  rather than formal computations in the subsequent argument
	   when performing estimates involving the term $H^j f $ for each $j\in\mathbb Z_+.$ 
  
Fix $m \geq 1$ and assume that there exists a constant $C_*\geq 4$ such that  for all $j \leq m-1$ and any $T>1$, the following estimate holds:
\begin{multline}\label{suppose+}
\int_{\mathbb{Z}^3}\sup\limits_{1\leq t \leq T}\norm{\widehat{H^{j}f}(t,k)}_{L^2_v}d\Sigma(k)\\+ \int_{\mathbb{Z}^3}\left(\int_{1}^{T}\normm{  \widehat{H^{j}f}(t,k)}^{2}dt\right)^{\frac12}d\Sigma(k) \leq \frac{\eps_0C_*^{j}  (j!)^\tau }{(j+1)^2}.
\end{multline}
 We will show that \eqref{suppose+} remains valid for $j = m$ when $C_*$ is sufficiently large. Throughout, $C$ denotes a generic constant depending only on $C_0, C_1$ in Section \ref{sec:prelim}, independent of $T$ and $m$.

Taking  Fourier transform  with respect to $x$  in  equation \eqref{hmf} and  considering the real part after taking the  inner product of  $L_v^2$ with $ \widehat{H^{m} f}$, we obtain
\begin{multline*}
  \frac{1}{2}\frac{d}{dt}\norm{\widehat{H^{m}f}}_{L^2_v}^2 -\big(\mathcal{L}\widehat{H^{m}f},\  \widehat{H^{m}f}\big)_{L^2_v}
  \\  \leq   \big|\big(\mathscr{F}_x([ H^{m},\ \mathcal{L}]f), \widehat{H^{m}f}\big)_{L^2_v}\big|
 +\big|\big(\mathscr{F}_x(H^{m}\Gamma(f,f)), \ \widehat{H^{m}f}\big)_{L^2_v}\big|.
\end{multline*}
Combining this with \eqref{rela} and recalling the constant $C_1$ therein, we have
\begin{equation*}\label{enestimate}
\begin{aligned}
&\frac{1}{2}\frac{d}{dt}\norm{\widehat{H^{m}f}}_{L^2_v}^2+C_1\normm{ \{\mathbf{I}-\mathbf{P} \} \widehat{H^{m}f}}^{2}\\
&\qquad\leq   \big|\big(\mathscr{F}_x([ H^{m},\ \mathcal{L}]f), \widehat{H^{m}f}\big)_{L^2_v}\big|+\big|\big(\mathscr{F}_x(H^{m}\Gamma(f,f)), \ \widehat{H^{m}f}\big)_{L^2_v}\big|.
\end{aligned}
\end{equation*}
This implies  that 
\begin{equation}\label{rt}
\begin{aligned}
& \int_{\mathbb Z^3} \sup\limits_{1\leq t\leq T}\norm{\widehat{H^{m}f}(t,k)}_{L^2_v}d\Sigma(k)+ \sqrt{C_1} \int_{\mathbb Z^3}\Big( \int_{1}^{T}\normm{  \{\mathbf{I}-\mathbf{P} \} \widehat{H^{m}f}}^{2}dt\Big)^{\frac12}d\Sigma(k)
\\
& \leq   \int_{\mathbb Z^3}   \norm{\widehat{H^{m}f}(1,k)}_{L^2_v} d\Sigma(k)+   \int_{\mathbb{Z}^3}\left[\int_{1}^{T}\big|\big(\mathscr{F}_x([H^{m},   \mathcal{L}]f), \widehat{H^{m}f}\big)_{L^2_v}\big|dt\right]^{\frac12}d\Sigma(k)\\
&\quad+ \int_{\mathbb{Z}^3}\left(\int_{1}^{T}\big|\big(\mathscr{F}_x(H^{m}\Gamma(f,f)), \widehat{H^{m}f}\big)_{L^2_v}\big|dt\right)^{\frac12}d\Sigma(k).
\end{aligned}
\end{equation}
Using the inductive hypothesis \eqref{suppose+} and Lemma \ref{lem:comestimate}, we obtain that for any $\eps > 0$,
\begin{equation}\label{estimatecom}
\begin{aligned}
&\int_{\mathbb{Z}^3}\left(\int_{1}^{T}\big|\big(\mathscr{F}_x(H^{m}\Gamma(f,f)), \widehat{H^{m}f}\big)_{L^2_v}\big|dt\right)^{\frac12}d\Sigma(k)\\
&\qquad+\int_{\mathbb{Z}^3}\left(\int_{1}^{T}\big|\big(\mathscr{F}_x([H^{m},\  \mathcal{L}]f), \ \widehat{H^{m}f}\big)_{L^2_v}\big|dt\right)^{\frac12}d\Sigma(k)
\\ 
& \leq    C{\eps}^{-1}\eps_0   \int_{\mathbb{Z}^3}\sup\limits_{1\leq t\leq T}\norm{\widehat{H^{m}f}(t,k)}_{L^2_v}d\Sigma(k)\\
&\quad +    \inner{\eps+C{\eps}^{-1}\eps_0}  \int_{\mathbb{Z}^3}\Big (\int_{1}^{T}\normm{ \widehat{H^{m}f}}^{2}dt\Big )^{\frac12}d\Sigma(k) +C \eps^{-1}  \big(\eps_0+C_*^{-1}\big)  \frac{\eps_0C_*^{m} (m!)^{\tau}}{(m+1)^2}. 
\end{aligned}
\end{equation}
Substituting the above estimate into \eqref{rt} yields that, for any $\eps>0,$
\begin{equation}\label{00}
\begin{aligned}
&\int _{\mathbb{Z}^3}\sup\limits_{1\leq t\leq T}\norm{\widehat{H^{m}f}(t,k)}_{L^2_v}d\Sigma (k)+  \int _{\mathbb{Z}^3}\left(\int_{1}^{T} \normm{ \{\mathbf{I}-\mathbf{P} \} \widehat{H^{m}f}(t,k)}^{2}dt\right)^{\frac12}d\Sigma (k)
\\ & \leq C \int_{\mathbb Z^3}   \norm{\widehat{H^{m}f}(1,k)}_{L^2_v} d\Sigma(k)+  C{\eps}^{-1}\eps_0   \int_{\mathbb{Z}^3}\sup\limits_{1\leq t\leq T}\norm{\widehat{H^{m}f} }_{L^2_v}d\Sigma(k) \\
&\quad+    \inner{\eps+C{\eps}^{-1}\eps_0}  \int_{\mathbb{Z}^3}\Big (\int_{1}^{T}\normm{ \widehat{H^{m}f}}^{2}dt\Big )^{\frac12}d\Sigma(k) +C \eps^{-1}  \big(\eps_0+C_*^{-1}\big)  \frac{\eps_0C_*^{m} (m!)^{\tau}}{(m+1)^2}.
\end{aligned}
\end{equation}
On the other hand, for any $m \geq 1$,
		\begin{equation*} \label{5.4}
		\begin{aligned}
		 \normm{\mathbf{P} \widehat{H^{m}f} }^2 
	\leq C\big( |\widehat { a_m } |^2+
		 |\widehat{b_m} |^2+ |\widehat{ c_m } |^2\big), 
			\end{aligned}
		\end{equation*}	
with $ a_m ,b_m$ and $ c_m $ defined in \eqref{mac}. Applying Proposition \ref{p3}, we conclude
\begin{equation*}\label{micest}
\begin{aligned}
	& \int _{\mathbb{Z}^3}\left[\int_{1}^{T} \normm{  \mathbf{P} \widehat{H^{m}f}}^{2}dt\right]^{\frac12}d\Sigma (k) \leq C \int _{\mathbb{Z}^3}\left[\int_{1}^{T} (|\widehat { a_m } |^2+
		 |\widehat{b_m} |^2+ |\widehat{ c_m } |^2 )dt\right]^{\frac12}d\Sigma (k)\\
		 &\leq  C  \int_{\mathbb{Z}^3} \sup_{1\leq t\leq T}\norm{\widehat{H^mf}}_{L^2_v}d\Sigma(k)+  C	\int_{\mathbb{Z}^3}\bigg( \int_1^{T}	\normm{ \{\mathbf{I}-\mathbf{P} \}\widehat{H^mf}}^2dt \bigg)^{\frac12}d\Sigma(k)
 	 \\
 	&\quad+C\eps_0  \int_{\mathbb{Z}^3} \bigg( \int_1^{T} \normm{\widehat{H^mf}(t,k)}^2dt  \bigg)^{\frac12}d\Sigma(k)
 	+C\big(  \eps_0+C_*^{-1}\big)  \frac{ \eps_0C_*^m  (m!)^{\tau}}{(m+1)^2}\\ 
 	& \leq C \int_{\mathbb Z^3}   \norm{\widehat{H^{m}f}(1,k)}_{L^2_v} d\Sigma(k)+  C{\eps}^{-1}\eps_0   \int_{\mathbb{Z}^3}\sup\limits_{1\leq t\leq T}\norm{\widehat{H^{m}f} }_{L^2_v}d\Sigma(k) \\
&\quad+    \inner{\eps+C{\eps}^{-1}\eps_0}  \int_{\mathbb{Z}^3}\Big (\int_{1}^{T}\normm{ \widehat{H^{m}f}}^{2}dt\Big )^{\frac12}d\Sigma(k) +C \eps^{-1}  \big(\eps_0+C_*^{-1}\big)  \frac{\eps_0C_*^{m} (m!)^{\tau}}{(m+1)^2},
	\end{aligned}
\end{equation*}
where the last line uses \eqref{00}.  Combining this with \eqref{00} and using the fact 
\begin{align*}
\normm{\widehat{H^mf}}^2\leq \big(\normm{\{\mathbf{I}-\mathbf{P} \} \widehat{H^mf}}+\normm{ \mathbf{P}  \widehat{H^mf}}\big)^2 \leq 2\normm{\{\mathbf{I}-\mathbf{P} \} \widehat{H^mf}}^2+2\normm{ \mathbf{P}  \widehat{H^mf}}^2
 \end{align*}
we obtain that,  for any $0<\eps<1,$     
\begin{equation}\label{25m22}
	\begin{aligned}
& \int _{\mathbb{Z}^3}\sup\limits_{1\leq t\leq T}\norm{\widehat{H^{m}f}(t,k)}_{L^2_v}d\Sigma (k)+  \int _{\mathbb{Z}^3}\left(\int_{1}^{T} \normm{  \widehat{H^{m}f}(t,k)}^{2}dt\right)^{\frac12}d\Sigma (k)\\
  & \leq   C  \int_{\mathbb Z^3}   \norm{\widehat{H^{m}f}(1,k)}_{L^2_v} d\Sigma(k)+ C{\eps}^{-1}\eps_0   \int_{\mathbb{Z}^3}\sup\limits_{1\leq t\leq T}\norm{\widehat{H^{m}f} }_{L^2_v}d\Sigma(k) \\
&\quad+    \inner{\eps+C{\eps}^{-1}\eps_0}  \int_{\mathbb{Z}^3}\Big (\int_{1}^{T}\normm{ \widehat{H^{m}f}}^{2}dt\Big )^{\frac12}d\Sigma(k) +C \eps^{-1}  \big(\eps_0+C_*^{-1}\big)  \frac{\eps_0C_*^{m} (m!)^{\tau}}{(m+1)^2}.
\end{aligned}
\end{equation}
Using \eqref{pjm0} implies 
\begin{equation*}
	\int_{\mathbb Z^3}   \norm{\widehat{H^{m}f}(1,k)}_{L^2_v} d\Sigma(k)\leq \frac{1+6s}{2s}\frac{\eps_0N^{m} (m!)^{\tau}}{(m+1)^2}
\end{equation*}
where $N$ is the constant in \eqref{pjm0}.  Consequently, combining the two estimates above and choosing $\eps=\frac{1}{4}$ in \eqref{25m22}, we conclude 
\begin{align*}
   \int _{\mathbb{Z}^3}\sup\limits_{1\leq t\leq T}\norm{\widehat{H^{m}f}(t,k)}_{L^2_v}d\Sigma (k)+  \int _{\mathbb{Z}^3}\left(\int_{1}^{T} \normm{  \widehat{H^{m}f}}^{2}dt\right)^{\frac12}d\Sigma (k) \leq \frac{\eps_0C_*^{m} (m!)^{\tau}}{(m+1)^2},
\end{align*}
provided $\eps_0$ is small sufficiently and $C_*$  are large enough. 
 This proves \eqref{suppose+} for $j = m$, and hence for all $j \in \mathbb{Z}_+$. Since the constant  $C_*$ is independent of $T$, assertion \eqref{k} follows. The same argument applies to $t \partial_{x_i} + \partial_{v_i}$ for $i = 2$ or $3$. This completes the proof of Proposition \ref{theorem}. 
  \end{proof}

  \begin{proposition}\label{prp:v}
  Assume the hypothesis of Proposition \ref{theorem} holds. Then,   
 after enlarging the constant $C_*$ from Proposition \ref{theorem} if necessary,  the  estimate 
 	\begin{multline*} 
 	\int_{\mathbb{Z}^3}\sup\limits_{ t\geq 1}  \norm{\partial_{v_1}^m  \hat{f}(t,k)}_{L^2_v}d\Sigma(k)\\
 	+ \int_{\mathbb{Z}^3}\left(\int_{1}^{+\infty} \normm{  \partial_{v_1}^{m}\hat{f}(t,k)}^{2}dt\right)^{1\over2}d\Sigma(k) \leq \frac{\eps_0  C_*^{m}  ( m!)^\tau  }{(m+1)^2}
 	\end{multline*}
 	holds true for any $m\in\mathbb Z_+$.  Moreover, the same estimate   remains valid if   $\partial_{v_1}$  is replaced by
 	$ \partial_{v_2}$  or $ \partial_{v_3}.$
\end{proposition}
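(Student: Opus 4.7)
The plan is to mirror the proof of Proposition \ref{theorem}, inducting on $m$ with $\partial_{v_1}$ playing the role of $H$. The base case $m=0$ is immediate from \eqref{v2}. For the inductive step, assuming \eqref{supp+} for all $j\le m-1$ with constant $L=C_*$ (possibly enlarged from the $C_*$ of Proposition \ref{theorem}), I will apply $\partial_{v_1}^m$ to \eqref{3}, take the Fourier transform in $x$, pair with $\partial_{v_1}^m\hat f$ in $L^2_v$, and use coercivity \eqref{rela} to obtain
\begin{equation*}
\tfrac12 \tfrac{d}{dt}\|\partial_{v_1}^m\hat f\|_{L^2_v}^2 + C_1 \normm{\{\mathbf I-\mathbf P\}\partial_{v_1}^m\hat f}^2 \leq |I_{\rm tr}|+|I_{\rm coll}|+|I_{\rm NL}|,
\end{equation*}
where $I_{\rm coll}$ and $I_{\rm NL}$ are the contributions of $[\partial_{v_1}^m,\mathcal{L}]f$ and $\partial_{v_1}^m\Gamma(f,f)$, and $I_{\rm tr}=(-mik_1\partial_{v_1}^{m-1}\hat f,\partial_{v_1}^m\hat f)_{L^2_v}$ arises because $\partial_{v_1}$ does not commute with the transport operator. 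I will then invoke Lemma \ref{lem:comestimate} with $P=\partial_{v_1}$ (so $\xi\equiv 0$, $\delta=0$, and $\tilde A=C_*\geq 4$) to control $|I_{\rm coll}|+|I_{\rm NL}|$ by absorbable factors of $\eps_0$ times the left-hand side together with an acceptable error of order $\eps^{-1}(\eps_0+C_*^{-1})\eps_0 C_*^m(m!)^\tau/(m+1)^2$, exactly as in Proposition \ref{theorem}.

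The new ingredient is $I_{\rm tr}$, which has no counterpart in the $H^m$-argument thanks to \eqref{kehigher}. I will handle it via the key identity $tik_1\hat g=\widehat{Hg}-\partial_{v_1}\hat g$ (since $H=t\partial_{x_1}+\partial_{v_1}$), yielding
\begin{equation*}
-mik_1\partial_{v_1}^{m-1}\hat f = -t^{-1}m\,\partial_{v_1}^{m-1}\widehat{Hf}+t^{-1}m\,\partial_{v_1}^m\hat f.
\end{equation*}
The inner product with $\partial_{v_1}^m\hat f$ splits accordingly. For the second piece $t^{-1}m\|\partial_{v_1}^m\hat f\|^2_{L^2_v}$, I will decompose $\partial_{v_1}^m\hat f=\mathbf P\partial_{v_1}^m\hat f+\{\mathbf I-\mathbf P\}\partial_{v_1}^m\hat f$: the microscopic contribution is absorbed into the coercive term on the left via \eqref{+lowoftri} after time integration, while the macroscopic contribution is dominated by $|\widehat{U_m}|^2+|\widehat{V_m}|^2+|\widehat{W_m}|^2$ and controlled by Proposition \ref{+p3+}. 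For the first piece $-t^{-1}m(\partial_{v_1}^{m-1}\widehat{Hf},\partial_{v_1}^m\hat f)$, I will apply Cauchy--Schwarz together with $\int_1^T t^{-2}\,dt\leq 1$, use the interpolation strategy of Lemma \ref{lem:m2} to pass into the $\normm{\cdot}$ norm, and invoke the inductive hypothesis \eqref{supp+} in combination with the uniform-in-time bound on $\normm{Hf}$ delivered by Proposition \ref{theorem} at $m=1$.

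To close the induction, I will combine the energy estimate above with the macroscopic bound from Proposition \ref{+p3+} (whose extra term $C\,3^m m!\int(\int\normm{Hf}^2\,dt)^{1/2}d\Sigma(k)$ is dominated by $\eps_0 C_*^m(m!)^\tau/(m+1)^2$ once $C_*>3$, thanks once again to Proposition \ref{theorem} at $m=1$). Choosing $\eps_0$ sufficiently small and $C_*$ sufficiently large, exactly as in the final lines of the proof of Proposition \ref{theorem}, will then propagate \eqref{supp+} from $j\le m-1$ to $j=m$ uniformly in $T$, giving the claimed bound. The argument with $\partial_{v_2}$ or $\partial_{v_3}$ is identical after replacing $H$ by $t\partial_{x_i}+\partial_{v_i}$, $i=2,3$, whose analogues in Proposition \ref{theorem} are available from its final sentence. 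The main obstacle is $I_{\rm tr}$: the $+t^{-1}m\|\partial_{v_1}^m\hat f\|^2$ contribution it produces is not absorbable by the coercivity alone, and closing the estimate genuinely requires the simultaneous use of Proposition \ref{+p3+} and the uniform control of $\normm{Hf}$ from Proposition \ref{theorem}, both of which are fortunately already in place.
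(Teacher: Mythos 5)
Your overall architecture matches the paper's proof: induction on $m$, the identity $\partial_{x_1}=t^{-1}(H-\partial_{v_1})$ to rewrite the transport commutator, Lemma \ref{lem:comestimate} for the collision terms, and Proposition \ref{+p3+} for the macroscopic part. However, your treatment of the piece $t^{-1}m\|\partial_{v_1}^m\hat f\|_{L^2_v}^2$ has a genuine gap. You propose to split it into macroscopic and microscopic parts and to absorb the microscopic part $t^{-1}m\|\{\mathbf I-\mathbf P\}\partial_{v_1}^m\hat f\|_{L^2_v}^2$ into the coercive term $C_1\normm{\{\mathbf I-\mathbf P\}\partial_{v_1}^m\hat f}^2$ via \eqref{+lowoftri}. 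But \eqref{+lowoftri} only converts the $L^2_v$ (or $H^s_v$) norm into the triple norm with a fixed constant; it does not remove the factor $m$. Since the coercivity constant $C_1$ is independent of $m$, a term of size $Cm\normm{\{\mathbf I-\mathbf P\}\partial_{v_1}^m\hat f}^2$ cannot be absorbed once $m$ is large, and the same factor $m$ (or $\sqrt m$ after time integration and square roots) contaminates the macroscopic part, where Proposition \ref{+p3+}'s right-hand side already contains non-small multiples of $\normm{\{\mathbf I-\mathbf P\}\partial_{v_1}^m\hat f}$ and $\sup\|\partial_{v_1}^m\hat f\|_{L^2_v}$. The mechanism the paper uses — and which your plan omits at exactly this point — is the interpolation \eqref{interp} with $\tilde\eps\sim\eps/m$: this trades the factor $m$ for the loss of one $v$-derivative, $m\|\partial_{v_1}^m\hat f\|^2_{L^2_v}\le\eps\|\partial_{v_1}^m\hat f\|^2_{H^s_v}+\eps^{-\frac{1-s}{s}}m^{\frac1s}\|\partial_{v_1}^{m-1}\hat f\|^2_{H^s_v}$, after which the power $m^{\tau}$ is exactly compensated by the factorial gain $((m-1)!)^\tau$ versus $(m!)^\tau$ in the inductive hypothesis. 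Without this step the induction does not close.

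A second, smaller inaccuracy: for the piece $t^{-1}m(\partial_{v_1}^{m-1}\widehat{Hf},\partial_{v_1}^m\hat f)$ you invoke ``the uniform-in-time bound on $\normm{Hf}$ from Proposition \ref{theorem} at $m=1$.'' The function $\partial_{v_1}^{m-1}Hf=H\partial_{v_1}^{m-1}f$ carries $m$ derivatives, so a bound at level $1$ cannot control it; one must pass through \eqref{fmn} to dominate $\|\widehat{H\partial_{v_1}^{m-1}f}\|_{H^s_v}$ by $\|\widehat{H^mf}\|_{H^s_v}+\|\partial_{v_1}^m\hat f\|_{H^s_v}$ and then use the full level-$m$ conclusion \eqref{k} of Proposition \ref{theorem} for the first term (the level-$1$ bound on $\normm{Hf}$ is only needed for the extra term in Proposition \ref{+p3+}, where your accounting is correct).
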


   \begin{proof}The proof is similar to that of Proposition \ref{theorem}, so we only sketch it and highlight the differences. Fix $m \geq 1$ and assume  that for all $j \leq m-1$ and any $T>1$,
\begin{multline}\label{vind}
\int_{\mathbb{Z}^3}\sup\limits_{1\leq t\leq T}\norm{ \partial_{v_1}^{j}\hat f (t,k)}_{L^2_v}d\Sigma(k)\\+ \int_{\mathbb{Z}^3}\left(\int_{1}^{T}\normm{   \partial_{v_1}^{j}\hat f (t,k)}^{2}dt\right)^{\frac12}d\Sigma(k) \leq \frac{\eps_0 C_*^{j}( j!)^\tau}{(j+1)^2}.
\end{multline}
where $C_*$ is the constant in \eqref{k} independent of $T.$  
We will prove the above estimate still holds true for $j=m.$

Observe $[\partial_t+v\cdot\partial_x, \partial_{v_1}^m]=-m\partial_{x_1}\partial_{v_1}^{m-1}.$ Similar to \eqref{rt} and \eqref{estimatecom}, we  conclude that  
\begin{equation} \label{v1}
	\begin{aligned}
& \int_{\mathbb Z^3} \sup\limits_{1\leq t\leq T}\norm{\partial_{v_1}^{m}\hat{f}(t,k)}_{L^2_v}d\Sigma(k)+  \int_{\mathbb Z^3}\Big( \int_{1}^{T}\normm{  \{\mathbf{I}-\mathbf{P} \} \partial_{v_1}^{m}\hat{f}(t,k)}^{2}dt\Big)^{\frac12}d\Sigma(k)
\\
& \leq  C \int_{\mathbb Z^3}   \norm{\partial_{v_1}^{m}\hat{f}(1,k)}_{L^2_v} d\Sigma(k) + C{\eps}^{-1}\eps_0   \int_{\mathbb{Z}^3}\sup\limits_{1\leq t\leq T}\norm{\partial_{v_1}^{m}\hat{f}(t,k)}_{L^2_v}d\Sigma(k)\\
&\quad +    \inner{\eps+C{\eps}^{-1}\eps_0}  \int_{\mathbb{Z}^3}\Big (\int_{1}^{T}\normm{ \partial_{v_1}^{m}\hat{f}}^{2}dt\Big )^{\frac12}d\Sigma(k) +C \eps^{-1}  \big(\eps_0+C_*^{-1}\big)  \frac{\eps_0C_*^{m} (m!)^{\tau}}{(m+1)^2}\\
&\quad +C\int_{\mathbb{Z}^3}\left(\int_{1}^{T}\big|\big(m \widehat{\partial_{x_1}\partial_{v_1}^{m-1}f}, \partial_{v_1}^{m}\hat{f}\big)_{L^2_v}\big|dt\right)^{\frac12}d\Sigma(k).
\end{aligned}
\end{equation}
We now estimate the last term on the right-hand side of \eqref{v1}. Using the identity $H=t\partial_{x_1}+\partial_{v_1}$ gives,  for any $t\geq 1,$
\begin{equation} \label{v2a}
\begin{aligned}
&\big|\big(m \widehat{\partial_{x_1}\partial_{v_1}^{m-1}f}, \partial_{v_1}^{m}\hat{f}\big)_{L^2_v}\big|\\
& \leq  t^{-1} m\big| \big(\widehat{H\partial_{v_1}^{m-1}f}, \partial_{v_1}^{m}\hat{f}\big)_{L^2_v}   \big|+t^{-1} m\big| \big(  \partial_{v_1}^{m}\hat f, \partial_{v_1}^{m}\hat{f}\big)_{L^2_v}   \big|\\
& \leq   \eps \norm{\widehat{H\partial_{v_1}^{m-1}f}}_{H_v^s}^2+\eps^{-1}m^2\norm{ \partial_{v_1}^{m}\hat f}_{H_v^{-s}}^2+ m\norm{\partial_{v_1}^m\hat f}_{L_v^2}^2.
\end{aligned}
\end{equation}
By the interpolation inequality \eqref{interp}, which implies
\begin{equation*}
	m\norm{\partial_{v_1}^m\hat f}_{L_v^2}^2 \leq  \eps  \norm{ \partial_{v_1}^{m}\hat f}_{H_v^{s}}^2+\eps^{-\frac{1-s}{s}}m^{2\tau }\norm{ \partial_{v_1}^{m-1}\hat f}_{H_v^{s}}^2,
\end{equation*}
and  
\begin{equation*}
	\begin{aligned}
		\eps^{-1} m^2\norm{ \partial_{v_1}^{m}\hat f}_{H_v^{-s}}^2 \leq 
		\left\{
		\begin{aligned} 
		&\eps  \norm{ \partial_{v_1}^{m}\hat f}_{H_v^{s}}^2+\eps^{1-\frac{1}{s}} m ^{ \frac{1}{s}}\norm{ \partial_{v_1}^{m-1}\hat f}_{H_v^{s}}^2\  \textrm { for }\  0<s< \frac12,\\
		&\eps^{-1}m^2  \norm{ \partial_{v_1}^{m-1}\hat f}_{H_v^{1-s}}^2\leq \eps^{-1} m^2 \norm{ \partial_{v_1}^{m-1}\hat f}_{H_v^{s}}^2 \  \textrm { for }\  \frac12\leq s<1,
		\end{aligned}
		\right. 
	\end{aligned}
\end{equation*}
where we have used the interpolation inequality \eqref{s} for the case of $0<s<\frac 12$.
Substituting these estimates into \eqref{v2a} and noting $\tau=\max\big\{\frac{1}{2s}, 1\big\},$ we obtain that, for any $0<\eps<1$,
\begin{equation*}
\begin{aligned}
	\big|\big(m \widehat{\partial_{x_1}\partial_{v_1}^{m-1}f}, \partial_{v_1}^{m}\hat{f}\big)_{L^2_v}\big|&\leq \eps   \norm{\widehat{H\partial_{v_1}^{m-1}f}}_{H_v^s}^2+2\eps  \norm{ \partial_{v_1}^{m}\hat f}_{H_v^{s}}^2+2\eps^{-\frac{1}{s}} m ^{ 2\tau}\norm{ \partial_{v_1}^{m-1}\hat f}_{H_v^{s}}^2 \\
	&\leq \eps   \norm{\widehat{H^{m}f}}_{H_v^s}^2+3\eps  \norm{ \partial_{v_1}^{m}\hat f}_{H_v^{s}}^2+2\eps^{-\frac{1}{s}} m ^{ 2\tau}\norm{ \partial_{v_1}^{m-1}\hat f}_{H_v^{s}}^2 \\
	&\leq \eps   \normm{\widehat{H^{m}f}}^2+3\eps  \normm{ \partial_{v_1}^{m}\hat f}^2+2\eps^{-\frac{1}{s}} m ^{ 2\tau}\normm{ \partial_{v_1}^{m-1}\hat f}^2, 
	\end{aligned}
\end{equation*} 
where the second inequality uses \eqref{fmn} and the last one follows from \eqref{+lowoftri}. This, along with \eqref{k}  and  the inductive assumption \eqref{vind},  yields 
\begin{equation*}
\begin{aligned}
&\int_{\mathbb{Z}^3}\left(\int_{1}^{T}\big|\big(m \widehat{\partial_{x_1}\partial_{v_1}^{m-1}f}, \partial_{v_1}^{m}\hat{f}\big)_{L^2_v}\big|dt\right)^{\frac12}d\Sigma(k)\\
&\leq  3 \eps \int _{\mathbb Z^3}\left(\int_{1}^{T} \normm{  \partial_{v_1}^{m}\hat f(t,k)}^{2}dt\right)^{\frac12}d\Sigma (k)+ \eps   \frac{\eps_0C_*^{m} (m!)^{\tau}}{(m+1)^2}+ \eps^{-\frac{1}{s}}C_*^{-1}\frac{\eps_0C_*^{m} (m!)^{\tau}}{(m+1)^2},
\end{aligned}
\end{equation*}
where $C_*$ is the constant in \eqref{k}.  Substituting the above estimate into \eqref{v1} and then repeating the argument after \eqref{25m22}, we conclude 
 that  
   \begin{equation*}
 	 \int _{\mathbb Z^3}\sup\limits_{1\leq t\leq T}\norm{ \partial_{v_1}^{m}\hat f (t,k)}_{L^2_v}d\Sigma (k)+  \int _{\mathbb Z^3}\left(\int_{1}^{T} \normm{  \partial_{v_1}^{m}\hat f}^{2}dt\right)^{\frac12}d\Sigma (k)\leq \frac{\eps_0C_*^{m} (m!)^{\tau}}{(m+1)^2}.
 \end{equation*}
Thus \eqref{vind} holds  for $j=m$ and thus for any $j\in\mathbb Z_+.$     The proof of   
  Proposition \ref{prp:v} is completed. 
\end{proof}

\begin{proof}
	[Proof of  Theorem \ref{gxt}: derivation of estimate \eqref{opa}]  The estimate follows directly from Propositions \ref{theorem} and \ref{prp:v}. Indeed, applying Lemma \ref{lem:fm}, we obtain
		\begin{equation*}
	\begin{aligned}
	&\int _{\mathbb Z^3}\sup\limits_{t\geq 1 } t^m \norm{ \widehat{\partial_{x_1}^{m}  f} (t,k)}_{L^2_v}d\Sigma (k) \\
	&\leq 2^m \int _{\mathbb Z^3}\sup\limits_{t\geq 1}  \norm{ \widehat{H^{m}  f} (t,k)}_{L^2_v}d\Sigma (k)+2^m \int _{\mathbb Z^3}\sup\limits_{t\geq 1}  \norm{ \partial_{v_1}^m \hat f (t,k)}_{L^2_v}d\Sigma (k)\\
	&\leq  2  \eps_0(2C_*)^m (m!)^\tau,
	\end{aligned}
	\end{equation*}
	where the last line follows from Propositions \ref{theorem} and \ref{prp:v}.  
 	Similarly, the same estimate remain valid with $\partial_{x_1}$ replaced by $\partial_{x_2}$ or $\partial_{x_3}$. This, with the fact that
\begin{eqnarray*}
\forall\ \alpha\in\mathbb Z_+^3,\quad 	\norm{\widehat{\partial_x^\alpha f}}_{L^2_{v}}\leq \sum_{1\leq j\leq 3}\norm{\mathscr F_x\big(\partial_{x_j}^{\abs\alpha}f\big)}_{L^2_{v}},
\end{eqnarray*}
gives
\begin{eqnarray*}
	\forall\ \alpha\in\mathbb Z_+^3,\quad 	\int _{\mathbb Z^3}\sup\limits_{t\geq 1} t^{\abs\alpha} \norm{ \widehat{\partial_{x}^{\alpha}  f} (t,k)}_{L^2_v}d\Sigma (k)\leq  6\eps_0(2C_*)^{|\alpha|} (|\alpha|!)^\tau.
\end{eqnarray*}
Similarly, 
\begin{eqnarray*}
	\forall\ \alpha\in\mathbb Z_+^3,\quad 	\int _{\mathbb Z^3} \Big(\int_1^{+\infty}  t^{2\abs\alpha} \normm{ \widehat{\partial_{x}^{\alpha}  f} (t,k)}^2dt\Big)^{\frac12}d\Sigma (k)\leq 6 \eps_0(2C_*)^{|\alpha|} (|\alpha|!)^\tau.
\end{eqnarray*}
As a result, taking $C=12+2C_*$, we obtain the desired estimate \eqref{opa}.
\end{proof}

\subsection{Improved radius in Gevrey space}
In the previous part, we proved that the radius in the optimal regularity space is bounded from below by $t$ for $t \geq 1$. We now proceed to prove estimate \eqref{opg} in Theorem \ref{gxt}, which establishes the lower bound $t^{\frac{1+2s}{2s}}$ for the radius in the Gevrey space. This estimate will follow from the proposition below.

\begin{proposition} \label{5.1}
Suppose the hypothesis of Theorem \ref{gxt} holds. Then there exists a positive constant $\bar C$ depending only on $C_0,C_1$ in Section \ref{sec:prelim} such that for any $m\in\mathbb Z_+$  it holds that
\begin{multline*}
	 \int_{\mathbb{Z}^3} \sup\limits_{t\geq 1} t^{\frac{1+2s}{2s}m} \comi{k}^{m} \norm{ \hat{f} }_{L^2_v}d\Sigma(k) 
 + \int_{\mathbb{Z}^3}\comi{k}^{m} \left(\int_{1}^{+\infty}t^{\frac{1+2s}{s}m} \normm{\hat{f}}^{2}dt\right)^{\frac12}d\Sigma(k) \\
+ \int_{\mathbb{Z}^3}\comi{k}^{m+\frac{s}{1+2s}} \left(\int_{1}^{+\infty}t^{\frac{1+2s}{s}m} \norm{  \hat{f}}_{L_v^2}^{2}dt\right)^{\frac12}d\Sigma(k)\leq \frac{\eps_0 \bar C^{m}  (m!)^{\frac{1+2s}{2s}} }{(m+1)^2}.	
	\end{multline*}
\end{proposition}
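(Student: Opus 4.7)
\textbf{Proof plan for Proposition \ref{5.1}.}
I proceed by strong induction on $m$. The base case $m=0$ follows directly from \eqref{v2} and \eqref{v33}. Fix $m\geq 1$ and suppose the stated estimate holds for every $j\leq m-1$. Taking the partial Fourier transform in $x$ of equation \eqref{3}, pairing with $\hat f$ in $L^2_v$ against the weight $t^{\frac{1+2s}{s}m}\comi k^{2m}$, integrating in time on $[1,T]$, and invoking the coercivity inequality \eqref{rela}, I obtain an energy inequality controlling the first two quantities on the left-hand side of the Proposition by (a) data at $t=1$, (b) a commutator term $\mathcal I := \frac{1+2s}{s}m \int_1^T t^{\frac{1+2s}{s}m-1}\comi k^{2m}\|\hat f\|^2_{L^2_v}\,dt$ produced by differentiating the time weight, and (c) a nonlinear contribution $\int_1^T t^{\frac{1+2s}{s}m}\comi k^{2m}|(\hat\Gamma(f,f),\hat f)_{L^2_v}|\,dt$. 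The initial-time data at $t=1$ is controlled by Proposition \ref{prp:ve}, and the exponential decay \eqref{longtime} handles integrability at infinity.

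The next step is to apply Corollary \ref{cor:sub} with the same time weight $t^{\frac{1+2s}{s}m}$ and spatial weight $\comi k^{2m}$. This corollary is tailored precisely to the above situation: its left-hand side is exactly the third (gain-type) quantity appearing in the Proposition, and its right-hand side contains the first two Proposition quantities, the commutator $\mathcal I$, and a macroscopic convolution sum. Adding the energy inequality to the subelliptic inequality places all three target quantities on the left and leaves $\mathcal I$ plus initial data plus convolution plus nonlinear terms on the right. The commutator $\mathcal I$ is then absorbed by exploiting the $\comi k^{s/(1+2s)}$ gain together with the interpolation
\[
\|\hat f\|^2_{L^2_v}\ \leq\ \eps\|\hat f\|^2_{H^s_v}+\eps^{-(1-s)/s}\|\hat f\|^2_{H^{s-1}_v}
\]
applied with $\eps$ proportional to $t/m^{a}$ for a suitable $a$; the $H^s_v$ part is absorbed into the target triple-norm piece, and the $H^{s-1}_v$ part matches the inductive hypothesis at order $m-1$ with precisely the Gevrey factor $(m!)^{(1+2s)/(2s)}$. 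The self-referential contributions (the endpoint indices in the convolution sum from Corollary \ref{cor:sub} and the endpoint nonlinear terms) are absorbed using smallness of $\eps_0$ and largeness of $\bar C$, exactly as in the proof of Proposition \ref{theorem}.

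For the convolution and nonlinear contributions, I use Lemma \ref{itea} to split $\comi k^m\leq 2\sum_{j=0}^m\binom{m}{j}\comi{k-\ell}^j\comi\ell^{m-j}$ inside the convolution \eqref{def:conv}, apply the trilinear estimate \eqref{upptrifour} and Minkowski-type bound \eqref{MF}, and treat the resulting sum by splitting into endpoint ($j\in\{0,m\}$) and interior ($0<j<m$) indices; the combinatorial tail handling mirrors \eqref{teccom} and produces the $1/(m+1)^2$ decay. The macroscopic component $\mathbf P\hat f$ is not controlled directly by the linear coercivity \eqref{rela}, so it must be separately estimated via a fluid-type system analogous to \eqref{fys} applied with pure $\comi k^m$ weights; fortunately, since $\partial_x^\alpha$ commutes with both $v\cdot\partial_x$ and $\mathcal L$, this step is strictly simpler than the $H^m$ analysis carried out in Lemma \ref{l5.2a} and closely parallels Proposition \ref{macmic}. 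The hardest part of the argument is the precise balancing in the absorption of $\mathcal I$: it is exactly this step that fixes the Gevrey index to $(1+2s)/(2s)$ rather than the optimal value predicted by the toy model in Section \ref{subsec:toy}, reflecting the comment in the methodology subsection that the subelliptic argument produces $\mathcal G^{(1+2s)/(2s)}$ regularity but not better.
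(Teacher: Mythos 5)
Your overall architecture matches the paper's: induction on $m$, a weighted energy estimate combined with the macroscopic control of $\mathbf P\hat f$ (Proposition \ref{macmic}), the subelliptic gain from Corollary \ref{cor:sub}, and the convolution/combinatorial treatment of the nonlinearity via Lemma \ref{itea}, \eqref{MF} and a tail argument as in \eqref{teccom}. The problem lies in the single most delicate step, the absorption of the commutator term $\mathcal I=\frac{1+2s}{s}m\int_1^T t^{\frac{1+2s}{s}m-1}\comi k^{2m}\norm{\hat f}^2_{L^2_v}\,dt$.

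You propose to handle $\mathcal I$ with the velocity interpolation $\norm{\hat f}^2_{L^2_v}\leq \tilde\eps\norm{\hat f}^2_{H^s_v}+\tilde\eps^{-\frac{1-s}{s}}\norm{\hat f}^2_{H^{s-1}_v}$ and claim the $H^{s-1}_v$ piece ``matches the inductive hypothesis at order $m-1$.'' It does not. With $\tilde\eps\sim \eps t/m^a$ the residual term is of the form $\eps^{-\frac{1-s}{s}}m^{a\frac{1-s}{s}+1}\,t^{\frac{1+2s}{s}m-1-\frac{1-s}{s}}\comi k^{2m}\norm{\hat f}^2_{H^{s-1}_v}$: it still carries the \emph{full} spatial weight $\comi k^{2m}$ and essentially the full time weight, whereas the inductive hypothesis at order $m-1$ controls only $\comi k^{2(m-1)+\frac{2s}{1+2s}}t^{\frac{1+2s}{s}(m-1)}\norm{\hat f}^2_{L^2_v}$. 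Trading velocity regularity cannot lower the order in the $x$-frequency $k$; that trick works in Section \ref{sec:local} and in Proposition \ref{prp:v} only because there the derivative being reduced is a genuine $\partial_{v_1}$. The correct mechanism is a Young-type interpolation in the joint weight $t^a\comi k^b$ alone, namely
\begin{equation*}
 m\, t^{ \frac{1+2s}{s}m -1}\comi k^{2m}   \leq  \eps\, t^{\frac{1+2s}{s}m}\comi k^{2m+\frac{2s}{1+2s}} +\eps^{-\frac{1+s}{s}} m^{\frac{1+2s}{s}}\, t^{\frac{1+2s}{s}(m-1) }\comi k^{2(m-1)+\frac{2s}{1+2s}},
\end{equation*}
paired throughout with $\norm{\hat f}^2_{L^2_v}$: the first term is absorbed into the subelliptic quantity $\comi k^{m+\frac{s}{1+2s}}\big(\int t^{\frac{1+2s}{s}m}\norm{\hat f}^2_{L^2_v}\big)^{1/2}$ supplied by Corollary \ref{cor:sub}, and the second is exactly the third component of the inductive hypothesis at order $m-1$. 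Note also that this is where the Gevrey index is actually fixed: the factor $m^{\frac{1+2s}{s}}$ (hence $m^{\frac{1+2s}{2s}}$ after the square root) multiplying $((m-1)!)^{\frac{1+2s}{2s}}$ yields $(m!)^{\frac{1+2s}{2s}}$, so attributing the index to a velocity interpolation misidentifies the source of the loss. With this step corrected, the remainder of your plan (initial data at $t=1$ via the short-time estimates, closing with smallness of $\eps_0$ and largeness of $\bar C$) goes through as in the paper.
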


\begin{proof}
  
We  use induction on $m$ to prove Proposition \ref{5.1} and the validity of $m=0$ follows from \eqref{v2} and  \eqref{v33}. 

Fix $ m\geq 1$,    and  suppose there exists a constant $\bar C$ such that for any  $ j \leq m-1 $ and any $T>1$,  the  following estimate holds: 
\begin{equation}\label{250527}
\begin{aligned}
&\int_{\mathbb{Z}^3}\sup\limits_{1\leq t \leq T} t^{\frac{1+2s}{2s}j} \comi{k}^{j} \norm{ \hat{f} }_{L^2_v}d\Sigma(k) + \int_{\mathbb{Z}^3}\comi{k}^{j} \bigg(\int_{1}^{T}t^{\frac{1+2s}{s}j} \normm{  \hat{f}(t,k)}^{2}dt\bigg)^{\frac12}d\Sigma(k) 
\\
&\qquad\qquad + \int_{\mathbb{Z}^3}\comi{k}^{j+\frac{s}{1+2s}} \bigg(\int_{1}^{T}t^{\frac{1+2s}{s}j} \norm{  \hat{f}}_{L_v^2}^{2}dt\bigg)^{\frac12}d\Sigma(k) \leq \frac{\eps_0 \bar C^{j}  (j!)^{\frac{1+2s}{2s}} }{(j+1)^2}.
\end{aligned}
\end{equation}
 We will prove  that estimate \eqref{250527} remains  valid for $j=m,$ provided $\bar {C}$ is chosen sufficiently large. 

{\it Step 1.}  From equation 
\begin{equation*}
	\frac12\frac{d}{dt}\norm{\hat f}_{L_v^2}^2-\textrm{Re} \big ( \mathcal{L}\hat{f},\ \hat{f} \big)_{L^2_v}=\textrm{Re} \big( 
			  \hat{\Gamma}(\hat{f},\hat{f}),\  \hat{f}\big)_{L^2_v} 
\end{equation*}
together with  estimates \eqref{rela} and  \eqref{upptrifour}, we obtain 
\begin{equation*}
	\frac12\frac{d}{dt}\norm{\hat f(t,k)}_{L_v^2}^2+C_1\normm{  \{\mathbf{I}-\mathbf{P} \}\hat{f}}^2\leq C\normm{\hat f(k)} \int_{\mathbb Z^3 } \norm{\hat f(k-\ell)}_{L^2_v}   \normm{\hat f(\ell)}    d\Sigma(\ell).
\end{equation*}
Using the fact  
		\begin{equation*} \label{250526}
		\begin{aligned}
		 \normm{\mathbf{P} \hat{f}}^2 \leq 
	C\big(  |\hat { a  } |^2+
		|\hat{b}|^2+|\hat{ c }|^2\big), 
			\end{aligned}
		\end{equation*}	
and Proposition \ref{macmic}, we deduce  that 
\begin{multline*}
	\frac12\frac{d}{dt}\norm{\hat f}_{L_v^2}^2+\delta_0\normm{  \hat{f}}^2\leq   C\normm{\hat f(t,k)} \int_{\mathbb Z^3 } \norm{\hat f(k-\ell)}_{L^2_v}   \normm{\hat f(\ell)}    d\Sigma(\ell)\\ 
	  - 4\delta_0 \frac{d}{dt} {\rm Re}\, \mathcal{K}_0(\hat f)    +C\delta_0 \Big|\int_{\mathbb Z^3}   \norm{\hat{f}(k-\ell)}_{L_v^2} \normm{\hat{f}(\ell)} d\Sigma(\ell)\Big|^2
\end{multline*}
for some sufficiently small  $\delta_0>0$, where $\mathcal{K}_0(\hat f(t,k))$ is defined  in Proposition \ref{macmic}.  Multiplying both sides by  $\big(t^{ \frac{1+2s}{2s} m}\comi k^{m}\big)^2 $  yields
\begin{multline*}
	 \frac12\frac{d}{dt}\Big(t^{ \frac{1+2s}{s} m}\comi k^{2m}\norm{\hat f(t,k)}_{L_v^2}^2\Big)+\delta_0 t^{ \frac{1+2s}{s} m}\comi k^{2m} \normm{  \hat{f} (t,k)}^2\\
	 \leq   - 4\delta_0 \frac{d}{dt} \Big(t^{ \frac{1+2s}{s} m}\comi k^{2m} {\rm Re}\, \mathcal{K}_0(\hat f)\Big)+ C  m t^{ \frac{1+2s}{s} m-1}\comi k^{2m}\norm{\hat f}_{L_v^2}^2\\
       + \frac{4(1+2s)}{s} m \delta_0  t^{ \frac{1+2s}{s} m-1 }\comi k^{2m} {\rm Re}\, \mathcal{K}_0(\hat f) +C \mathscr R_m , 
\end{multline*}
where 
\begin{multline}\label{rm}
	\mathscr R_m(t,k):=t^{ \frac{1+2s}{s} m}\comi k^{2m}\normm{\hat f(t,k)} \int_{\mathbb Z^3 } \norm{\hat f(t,k-\ell)}_{L^2_v}   \normm{\hat f(t,\ell)}    d\Sigma(\ell)\\
	   +    \Big|t^{ \frac{1+2s}{2s} m}\comi k^{m}\int_{\mathbb Z^3}   \norm{\hat{f}(t,k-\ell)}_{L_v^2} \normm{\hat{f}(t,\ell)} d\Sigma(\ell)\Big|^2.
\end{multline}
Integrating over $t \in [1,T]$ and $k \in \mathbb{Z}^3$ and using 
the estimate
   \begin{equation*}\label{dt}
	4\delta_0 |\mathcal{K}_0(\hat f)|  \leq C\delta_0 \norm{\hat f}_{L_v^2}^2, 
\end{equation*}
which follows from   \eqref{km} for $m=0,$  we find that by choosing $\delta_0$ sufficiently small,  
\begin{equation}\label{f1k}
		\begin{aligned}
		& \int_{\mathbb{Z}^3}\sup\limits_{1\leq t \leq T} t^{\frac{1+2s}{2s}m} \comi{k}^{m} \norm{ \hat{f} }_{L^2_v}d\Sigma(k) 
  +  \int_{\mathbb{Z}^3}\comi{k}^{m} \left(\int_{1}^{T}t^{\frac{1+2s}{s}m} \normm{  \hat{f} }^{2}dt\right)^{\frac12}d\Sigma(k)\\
&\leq C\int_{\mathbb{Z}^3}   \comi{k}^{m} \norm{ \hat{f}(1,k)}_{L^2_v}d\Sigma(k)+C\int_{\mathbb Z^3}\Big( \int_1^T\mathscr R_m (t,k) dt\Big)^{\frac12} d\Sigma(k) \\
&\quad  + C  \int_{\mathbb Z^3} \bigg[\int_1^T   m t^{ \frac{1+2s}{s}m -1}\comi k^{2m}\norm{\hat f}_{L_v^2}^2dt\bigg]^{\frac12}d\Sigma(k) 
 	.
	\end{aligned}
\end{equation}
By Propositions \ref{theorem} and \ref{prp:v},   we may choose $\bar{C}$ large enough so that
\begin{equation*}
	 \int_{\mathbb{Z}^3}   \comi{k}^{m} \norm{ \hat{f}(1,k)}_{L^2_v}d\Sigma(k) \leq 
	C \frac{\eps_0 4^m C_*^{m}  ( m!)^\tau  }{(m+1)^2} 
	 \leq  C \frac{ \eps_0\bar C^{m-1}  (m!)^{\frac{1+2s}{2s}}}{(m+1)^2},
\end{equation*}
where $\tau=\max\big \{ \frac{1}{2s}, 1\big\}$ and $C_*$ is the constant in Proposition  \ref{theorem}.  For the last term on the right-hand of \eqref{f1k},    using the  inequality  \begin{equation*}
  m t^{ \frac{1+2s}{s}m -1}\comi k^{2m}   \leq  \eps t^{\frac{1+2s}{s}m}\comi k^{2m+\frac{2s}{1+2s}} +\eps^{-\frac{1+s}{s}} m^{\frac{1+2s}{s}} t^{\frac{1+2s}{s}(m-1) }\comi k^{2(m-1)+\frac{2s}{1+2s}}, 
   \end{equation*}
we obtain that, for any $\eps>0,$ 
  \begin{equation}\label{mm1}
  	\begin{aligned}
  		&  \int_{\mathbb Z^3} \bigg[\int_1^T   m t^{ \frac{1+2s}{ s}m -1}\comi k^{2m}\norm{\hat f}_{L_v^2}^2dt\bigg]^{\frac12}d\Sigma(k)\\
  		&\leq \eps  \int_{\mathbb Z^3} \comi k^{m+\frac{s}{1+2s}} \bigg[\int_1^T  t^{ \frac{1+2s}{s} m}  \norm{\hat f}_{L_v^2}^2dt\bigg]^{\frac12}d\Sigma(k)\\
  		&\quad +C\eps^{-\frac{1+s}{2s}}   m^{\frac{1+2s}{2s}} \int_{\mathbb Z^3} \comi k^{(m-1)+\frac{s}{1+2s}} \bigg[\int_1^T  t^{\frac{1+2s}{s}(m-1)}  \norm{\hat f}_{L_v^2}^2dt\bigg]^{\frac12}d\Sigma(k)\\
  		&\leq \eps  \int_{\mathbb Z^3} \comi k^{m+\frac{s}{1+2s}} \bigg[\int_1^T  t^{m\frac{1+2s}{s} }  \norm{\hat f}_{L_v^2}^2dt\bigg]^{\frac12}d\Sigma(k)+  C\eps^{-\frac{1+s}{2s}}    \frac{ \eps_0 \bar C^{m-1} (m!)^{\frac{1+2s}{2s}}}{m^2},
  	\end{aligned}
  \end{equation}
 where  the last line follows from the inductive assumption  \eqref{250527}. 
Substituting the above estimates into  inequality \eqref{f1k} yields that, for any $\eps>0$ 
\begin{equation}\label{difestimate}
\begin{aligned}
		& \int_{\mathbb{Z}^3}\sup\limits_{1\leq t \leq T} t^{\frac{1+2s}{2s}m} \comi{k}^{m} \norm{ \hat{f}  }_{L^2_v}d\Sigma(k) 
  + \int_{\mathbb{Z}^3}\comi{k}^{m} \left(\int_{1}^{T}t^{\frac{1+2s}{s}m} \normm{\hat{f}}^{2}dt\right)^{\frac12}d\Sigma(k)\\
&\leq  \eps  \int_{\mathbb Z^3} \comi k^{m+\frac{s}{1+2s}} \bigg[\int_1^T  t^{m\frac{1+2s}{s} }  \norm{\hat f}_{L_v^2}^2dt\bigg]^{\frac12}d\Sigma(k)+  C\eps^{-\frac{1+s}{2s}}    \frac{ \eps_0 \bar C^{m-1} (m!)^{\frac{1+2s}{2s}}}{m^2}\\
&\quad +C\int_{\mathbb Z^3}\Big( \int_1^T\mathscr R_m(t,k) dt\Big)^{\frac12} d\Sigma(k),
	\end{aligned}
\end{equation}
where $\mathscr R_m$ is defined in \eqref{rm}. 

{\it Step 2.} We now estimate the last term on the right-hand side of \eqref{difestimate} and show that
\begin{equation}\label{esrm}
	\begin{aligned}
		&\int_{\mathbb Z^3}\Big( \int_1^T\mathscr R_m(t,k) dt\Big)^{\frac12} d\Sigma(k) \leq
		C\eps_0 \int_{\mathbb{Z}^3} \sup_{1\leq t\leq T}t^{\frac{1+2s}{2s}m}\comi k^m\norm{\hat{f}}_{L^2_v}d\Sigma(k)\\
 	&\qquad\qquad +C\eps_0  \int_{\mathbb{Z}^3} \comi k^m \bigg( \int_1^{T} t^{\frac{1+2s}{s}m} \normm{\hat{f}}^2dt  \bigg)^{\frac12}d\Sigma(k)
 	+C  \eps_0^2\frac{ \bar C^m  (m!)^{\frac{1+2s}{2s}}}{(m+1)^2}.
	\end{aligned}
\end{equation}
Using Lemma \ref{itea} and  \eqref{MF} we compute 
\begin{equation*}
	\begin{aligned}
	&\int_{\mathbb Z^3}	\Big[\int_1^T \Big|  t^{\frac{1+2s}{2s}m}\comi k^m \int_{\mathbb Z^3 } \norm{\hat f(k-\ell)}_{L^2_v}   \normm{\hat f(\ell)}    d\Sigma(\ell)\Big|^2dt\Big]^{\frac12}d\Sigma(k)\\
	&\leq C \int_{\mathbb Z^3}	\bigg[\int_1^T \Big| \int_{\mathbb Z^3 }\sum_{j=0}^{m}\binom{m}{j}  t^{\frac{1+2s}{2s}j} \comi {k-\ell}^{j} \norm{\hat f(k-\ell)}_{L^2_v} \\
	&\qquad  \qquad \qquad \qquad \qquad  \qquad    \times  t^{\frac{1+2s}{2s}(m-j)} \comi \ell^{m-j}\normm{\hat f(\ell)}    d\Sigma(\ell)\Big|^2dt\bigg]^{\frac12}d\Sigma(k)\\
	&\leq C
	\sum_{j=0}^{m}\binom{m}{j}\Big(\int_{\mathbb{Z}^3}\sup\limits_{ 1\leq t\leq T} t^{\frac{1+2s}{2s}j} \comi {k}^{j}\norm{\hat{f}(t,k)}_{L^2_v}d\Sigma(k)\Big)\\
	&\qquad\qquad \times \int_{\mathbb{Z}^3}\Big(\int_{1}^{T}t^{\frac{1+2s}{s}(m-j)} \comi {k}^{2(m-j)}\normm{  \hat{f}(t,k)}^{2}dt\Big)^{1\over2}d\Sigma(k).
	\end{aligned}
\end{equation*}
Using the inductive hypothesis \eqref{250527} and following the computations in the proof of Lemma \ref{lem: ma}, one has 
  \begin{equation}\label{mjs}
  \begin{aligned}
  & \sum_{j=0}^{m}\binom{m}{j}\Big(\int_{\mathbb{Z}^3}\sup\limits_{ 1\leq t\leq T} t^{\frac{1+2s}{2s}j} \comi {k}^{j}\norm{\hat{f}(t,k)}_{L^2_v}d\Sigma(k)\Big)\\
	&\qquad\qquad \times \int_{\mathbb{Z}^3}\Big(\int_{1}^{T}t^{\frac{1+2s}{s}(m-j)} \comi {k}^{2(m-j)}\normm{  \hat{f}(t,k)}^{2}dt\Big)^{1\over2}d\Sigma(k)	\\
&	\leq C\eps_0 \int_{\mathbb{Z}^3} \sup_{1\leq t\leq T}t^{\frac{1+2s}{2s}m}\comi k^m\norm{\hat{f}(t,k)}_{L^2_v}d\Sigma(k)\\
 	&\quad+C\eps_0  \int_{\mathbb{Z}^3} \comi k^m \bigg( \int_1^{T} t^{\frac{1+2s}{s}m} \normm{\hat{f}(t,k)}^2dt  \bigg)^{\frac12}d\Sigma(k)
 	+C  \eps_0^2\frac{ \bar C^m  (m!)^{\frac{1+2s}{2s}}}{(m+1)^2}.
	\end{aligned}
  \end{equation}
Combining the two inequalities above yields that  
\begin{equation*}
	\begin{aligned}
		&\int_{\mathbb Z^3}	\Big[\int_1^T \Big|  t^{\frac{1+2s}{2s}m}\comi k^m \int_{\mathbb Z^3 } \norm{\hat f(k-\ell)}_{L^2_v}   \normm{\hat f(\ell)}    d\Sigma(\ell)\Big|^2dt\Big]^{\frac12}d\Sigma(k)\\
		&\leq C\eps_0 \int_{\mathbb{Z}^3} \sup_{1\leq t\leq T}t^{\frac{1+2s}{2s}m}\comi k^m\norm{\hat{f}(t,k)}_{L^2_v}d\Sigma(k)\\
 	&\quad+C\eps_0  \int_{\mathbb{Z}^3} \comi k^m \bigg( \int_1^{T} t^{\frac{1+2s}{s}m} \normm{\hat{f}(t,k)}^2dt  \bigg)^{\frac12}d\Sigma(k)
 	+C  \eps_0^2\frac{ \bar C^m  (m!)^{\frac{1+2s}{2s}}}{(m+1)^2}. 		
 	\end{aligned}
\end{equation*}
A similar bound holds for the first term in the definition \eqref{rm}   of $\mathscr R_m$.  This yields \eqref{esrm}.

{\it Step 3.}  Substituting \eqref{esrm} into  \eqref{difestimate}, we obtain for any $\eps>0$ 
\begin{equation*}\label{}
		\begin{aligned}
		& \int_{\mathbb{Z}^3}\sup\limits_{1\leq t \leq T} t^{\frac{1+2s}{2s}m} \comi{k}^{m} \norm{ \hat{f} }_{L^2_v}d\Sigma(k) 
  + \int\comi{k}^{m} \left(\int_{1}^{T}t^{\frac{1+2s}{s}m} \normm{  \hat{f} }^{2}dt\right)^{\frac12}d\Sigma(k)\\
&\leq \eps  \int_{\mathbb Z^3} \comi k^{m+\frac{s}{1+2s}} \bigg[\int_1^T  t^{m\frac{1+2s}{s} }  \norm{\hat f}_{L_v^2}^2dt\bigg]^{\frac12}d\Sigma(k)\\
&\quad+ C\big(\eps_0+  \eps^{-\frac{1+s}{2s}} \bar C^{-1}\big)  \frac{ \eps_0 \bar C^{m-1} (m!)^{\frac{1+2s}{2s}}}{m^2},
	\end{aligned}
\end{equation*}
provided $\eps_0$ is small sufficiently.  For the first term on the right-hand side, 
combining    \eqref{mjs} and \eqref{mm1} with \eqref{sube}  yields
\begin{equation*}
		\begin{aligned}
  &\int_{\mathbb Z^3} \comi{k}^{m+\frac{s}{1+2s}} \Big(\int_1^T   t ^{\frac{1+2s}{s}m} 
	 \norm{\hat{f}}^2_{L^2_v}dt \Big)^{\frac12}d\Sigma(k) \\
	 &\leq  C \int_{\mathbb{Z}^3}\sup\limits_{1\leq t \leq T} t^{\frac{1+2s}{2s}m} \comi{k}^{m} \norm{ \hat{f} }_{L^2_v}d\Sigma(k) \\
 	&\qquad+C  \int_{\mathbb{Z}^3} \comi k^m \bigg( \int_1^{T} t^{\frac{1+2s}{s}m} \normm{\hat{f}}^2dt  \bigg)^{\frac12}d\Sigma(k)
 +	C\big(\eps_0+   \bar C^{-1}\big)\frac{ \bar C^m  (m!)^{\frac{1+2s}{2s}}}{(m+1)^2}.  
 	\end{aligned}
	\end{equation*}
Consequently, from the two estimates above, it follows that
\begin{equation*} \label{25052801}
	\begin{aligned}
		& \int_{\mathbb{Z}^3}\sup\limits_{1\leq t \leq T} t^{\frac{1+2s}{2s}m} \comi{k}^{m} \norm{ \hat{f} }_{L^2_v}d\Sigma(k) + \int_{\mathbb{Z}^3}\comi{k}^{m} \left(\int_{1}^{T}t^{\frac{1+2s}{s}m} \normm{  \hat{f}}^{2}dt\right)^{\frac12}d\Sigma(k)
\\
&\qquad\qquad+ \int_{\mathbb{Z}^3}\comi{k}^{m+\frac{s}{1+2s}} \left(\int_{1}^{T}t^{\frac{1+2s}{s}m} \norm{  \hat{f}}_{L_v^2}^{2}dt\right)^{\frac12}d\Sigma(k) \\
&\leq C	 \big(\eps_0+    \bar C^{-1}\big)\frac{ \bar C^m  (m!)^{\frac{1+2s}{2s}}}{(m+1)^2}\leq  \frac{\bar C^m  (m!)^{\frac{1+2s}{2s}}}{(m+1)^2},
	\end{aligned}
\end{equation*}
provided   $\eps_0$ is small enough  and $\bar C$ is large sufficiently.  Thus \eqref{250527} holds true for $j=m$ and thus for any $j\in\mathbb Z_+.$
This completes the proof of Proposition \ref{5.1}. 
\end{proof}

   \subsection*{\bf Acknowledgements}
 \noindent The research of W.-X. Li was supported by Natural Science Foundation of China (Nos. 12325108,12131017, 12221001),  and  the Natural Science Foundation of Hubei Province (No. 2019CFA007).   The research of L. Liu was supported by Natural Science Foundation of China (No. 12571243) and  Anhui Provincial Natural Science Foundation (No. 2508085Y004). The research of H. Wang was supported by Natural Science Foundation of China (No. 12301284).
   
\subsection*{\bf Declarations}
\noindent 
The authors declare that there is no conflict of interest. All the authors contributed equally to this
work. The manuscript has no associated data.

\appendix

\section{Proof of Lemma \ref{lem:fm}}\label{app:inequ}
To prove \eqref{+pse1} and \eqref{pse1+} we compute
 \begin{align*}
&\Big|\mathscr F_{x,v}\big(	(A_1+A_2)^mf \big)(k,\eta)\Big|^2 =\big|  	\big (\varphi_1  (k,\eta)+\varphi_2 (k,\eta)\big )^m \mathscr F_{x,v}f (k,\eta)\big|^2  \\
&\leq \big(|\varphi_1(k,\eta)|+|\varphi_2(k,\eta)| \big)^{2m}  \times  \big| \mathscr{F}_{x,v}f(k, \eta) \big |^2 \\
&\leq 2^{2m} \big|\varphi_1(k,\eta)^m \mathscr{F}_{x,v}f(k, \eta) \big|^2  +  2^{2m} \big| \varphi_2(k,\eta)^m\mathscr{F}_{x,v}f(k, \eta) \big| ^{2}\\
&\leq 2^{2m}\big|  \mathscr{F}_{x,v}(A_1^mf)(k, \eta) \big|^2  + 2^{2m} \big|   \mathscr{F}_{x,v}(A_2^mf)(k, \eta) \big| ^{2},
\end{align*}
the second inequality using the fact that $(p+q)^{ 2m}\leq (2p)^{ 2m}+(2q)^{ 2m}$ for any numbers $p,q\geq 0$ and any $m\in\mathbb Z_+$.
 As a result, we combine the above estimate with Parseval equality, to conclude that
   \begin{equation*}
   	\begin{aligned}
   		& \norm{\mathscr{F}_x\big( (A_1+A_2)^mf\big)(k)}^2_{L^2_{v}} =\int_{  \mathbb R^3} \Big|\mathscr F_{x,v}\big(	(A_1+A_2)^mf \big)(k,\eta)\Big|^2  d\eta
   		\\ &\leq 2^{2m} \int_{ \mathbb R^3} \big|  \mathscr{F}_{x,v}(A_1^mf)(k, \eta) \big|^2   d\eta + 2^{2m} \int_{\mathbb R^3} \big|   \mathscr{F}_{x,v}(A_2^mf)(k, \eta) \big| ^{2}  d\eta
   		\\& \leq     2^{2m}\norm{\mathscr{F}_x\big( A_1^mf\big)(k) }^2_{L^2_{v}}+ 2^{2m}\norm{\mathscr{F}_x\big( A_2^mf\big)(k) }^2_{L^2_{v}}\\
   		& \leq    \Big( 2^{m}\norm{\mathscr{F}_x\big( A_1^mf\big)(k) }_{L^2_{v}}+ 2^{m}\norm{\mathscr{F}_x\big( A_2^mf\big)(k) }_{L^2_{v}}\Big)^2.
   	\end{aligned}
   \end{equation*}
   This gives \eqref{+pse1} and \eqref{pse1+}. 

Estimate \eqref{fmn} just follows from the fact that
\begin{equation*}
     |\varphi_1(k,\eta)^m \varphi_2(k,\eta)^n|\leq |\varphi_1(k,\eta)^{m+n}|+|\varphi_2(k,\eta)^{m+n}|.
\end{equation*}
 The proof of Lemma \ref{lem:fm} is completed.

\section{Subelliptic estimates}\label{app:sub}

This section is devoted to proving Proposition \ref{sub:ell} and Corollary \ref{cor:sub}, which establish the subelliptic estimates for the Boltzmann equation.

\begin{proof}
	[Proof of Proposition \ref{sub:ell}] 
Taking Fourier transform with respect to $x$ in equation \eqref{linbol}, we obtain 
\begin{equation} \label{equa}
\partial_{t}\hat{h}+iv\cdot k \hat{h}-\mathcal{L}\hat{h}
=\hat g. 
\end{equation}
For each $k$, define $\lambda_{k}(\eta)$ by setting 
\begin{equation}\label{lake}
\lambda_{k}(\eta):=\frac{-k\cdot\eta}{\comi k^{\frac{2+2s}{1+2s}}}\chi
\bigg(\frac{\abs\eta}{\comi k^{\frac{1}{1+2s}}} \bigg),
\end{equation}
where $\chi \in C_0^{\infty}(\mathbb{R};[0,1])$ such that $\chi=1$ on
$[-1,1]$ and supp $\chi\subset [-2,2].$
Let $\lambda_{k}(D_v)$ be the Fourier multiplier of the symbol $\lambda_{k}(\eta)$, namely,
\begin{equation*}
	\mathscr F_v (\lambda_k(D_v) h)(\eta)=\lambda_k(\eta)(\mathscr F_v h) (\eta).
\end{equation*}
Note for any $\alpha\in\mathbb{Z}_+^3$ there exists a constant $C_\alpha$ depending only on $\alpha,$ such that  
\begin{equation*}
\forall \ k\in\mathbb Z^3,\  \forall\ \eta\in\mathbb R^3,\quad |\partial_\eta^\alpha \lambda_k(\eta)|\leq C_\alpha.
\end{equation*}
This, with the help of pseudo-differential calculus,  yields 
\begin{equation}\label{uppb}
    \norm{\lambda_k(D_v)h}_{L_v^2}\leq    \norm{  h}_{L_v^2}\ \textrm{ and }\  \normm{\lambda_k(D_v) h} \leq C  \normm{ h},
\end{equation}
where the first inequality uses the fact that $|\lambda_k(\eta)|\leq 1$  and the second estimate follows from the characterization of the trip-norm $\normm{\cdot}$  in terms of pseudo-differential operators (cf. \cite{MR3950012}).  Taking $L^2_v$-product with $\lambda_k(D_v)\hat{h}$ on both sides of   equation \eqref{equa} and using \eqref{rela},
we obtain  
\begin{equation}\label{lr}
    \text{Re}\ \big( i(v\cdot k)\hat{h},\  \lambda_k(D_v)\hat{h} \big)_{L^2_v} 
\leq - \frac12\frac{d}{dt}    \big(\hat h, \lambda_k(D_v)\hat h\big)_{L^2_v}+ \text{Re}\ \big( \hat{g},\  \lambda_k(D_v)\hat{h} \big)_{L^2_v}.
\end{equation} 
 On the other hand,  
 \begin{align*}
     \text{Re}\ \big( i(v\cdot k)\hat{h},\  \lambda_k(D_v)\hat{h} \big)_{L^2_v}&=\text{Re}\ \big(  (\partial_\eta\cdot k)\mathscr F_{x,v} h,\  \lambda_k(\eta)\mathscr F_{x,v}{h} \big)_{L^2_\eta}\\
     &=-\sum_{1\leq j\leq 3}\big(k_j   (\partial_{\eta_j}\lambda_k(\eta))     \mathscr F_{x,v} h,\  \mathscr F_{x,v}{h} \big)_{L^2_\eta}
 \end{align*}
 In view of \eqref{lake}, direct computation yields that 
 \begin{multline*}
    \sum_{1\leq j\leq 3}  k_j   (\partial_{\eta_j}\lambda_k(\eta)) \geq  \frac{ |k|^2}{\comi k^{\frac{2+2s}{1+2s}}}\chi
\bigg(\frac{\abs\eta}{\comi k^{\frac{1}{1+2s}}} \bigg)-C\comi \eta^{2s}\\
 \geq \frac{ |k|^2}{\comi k^{\frac{2+2s}{1+2s}}}- \frac{ |k|^2}{\comi k^{\frac{2+2s}{1+2s}}}\bigg[1-\chi
\bigg(\frac{\abs\eta}{\comi k^{\frac{1}{1+2s}}} \bigg)\bigg]-C\comi \eta^{2s}\geq  \comi k^{\frac{2s}{1+2s}}-C\comi \eta^{2s}. 
 \end{multline*} 
 As a result, combining the   two estimates above we conclude
 \begin{align*}\label{RE}
     \text{Re}\ \big( i(v\cdot k)\hat{h},\  \lambda_k(D_v)\hat{h} \big)_{L^2_v}&\geq  \comi k^{\frac{2s}{1+2s}} \norm{\hat{h} }_{L^2_v}^2-C \norm{\hat{h} }_{H^s_v}^2\geq  \comi k^{\frac{2s}{1+2s}} \norm{\hat{h} }_{L^2_v}^2-C \normm{\hat{h} }^2,
 \end{align*}
where the last inequality uses \eqref{+lowoftri}. Substituting the above estimate into \eqref{lr} yields that
 \begin{equation}\label{B5}
     \begin{aligned}
          \comi k^{\frac{2s}{1+2s}} \norm{\hat{h}(t,k) }_{L^2_v}^2 \leq - \frac12\frac{d}{dt}    \big(\hat h, \lambda_k(D_v)\hat h\big)_{L^2_v}+C \normm{\hat{h} }^2+ \text{Re}\ \big( \hat{g},\  \lambda_k(D_v)\hat{h} \big)_{L^2_v}.
     \end{aligned}
 \end{equation}
 On the other hand, by \eqref{equa} and \eqref{+rela}, we have
 \begin{equation}\label{B6}
     \begin{aligned}
       \frac12 \frac{d}{dt}  \norm{\hat h}_{L^2_v}^2+ C_1   \normm{ \hat{h} }^2 \leq    \text{Re}\ \big( \hat{g},\   \hat{h} \big)_{L^2_v}+\norm{\hat h}_{L_v^2}^2.
     \end{aligned}
 \end{equation}
 Then multiply both side of \eqref{B5} by some constant $c_0>0$, we have
 \begin{equation}\label{B55}
     \begin{aligned}
          c_0\comi k^{\frac{2s}{1+2s}} \norm{\hat{h}(t,k) }_{L^2_v}^2 \leq -  \frac{c_0}{2}\frac{d}{dt}    \big(\hat h, \lambda_k(D_v)\hat h\big)_{L^2_v}+Cc_0 \normm{\hat{h} }^2+c_0 \text{Re}\ \big( \hat{g},\  \lambda_k(D_v)\hat{h} \big)_{L^2_v}.
     \end{aligned}
 \end{equation}
Together with Gronwall’s inequality, we integrate the above two  estimates \eqref{B6}-\eqref{B55} over $[t_1, t_2]$ and using
\begin{equation*}
	\big| \big(\hat h, \lambda_k(D_v)\hat h\big)_{L^2_v}\big|\leq \norm{\hat h}_{L^2_v}^2 
\end{equation*}
that follows from \eqref{uppb}, 
we obtain for some small constant $ 0< c_0<1$ and $t_2 \leq 1$
\begin{align*}
         &\Big(c_0\int_{t_1}^{t_2}  \comi k^{\frac{2s}{1+2s}} \norm{\hat{h}(t,k)}_{L^2_v}^2dt\Big)^{\frac12}+\Big(\frac{1-c_0}{2}\Big)^{\frac12} \sup_{t\in[t_1,t_2]} \norm{\hat h(t,k)}_{L^2_v}\\
         &\qquad+\inner{C_1-Cc_0}^{\frac12}\Big(\int_{t_1}^{t_2}    \normm{\hat{h}(t,k)}^2dt\Big)^{\frac12}  \\
         &\leq  C \norm{\hat h(t_1,k)}_{L^2_v}+ \int_{t_1}^{t_2} \big| \big( \hat{g},\  \hat h+c_0\lambda_k(D_v)\hat{h}\big)_{L^2_v}\big|dt .
     \end{align*}
  This yields the assertion of Proposition \ref{sub:ell} by choosing 
  \begin{equation*}
  	\mathcal M=1+c_0\lambda_k(D_v)
  \end{equation*}
which satisfies condition \eqref{psk}  in view of \eqref{uppb}. The proof is completed. 
\end{proof}

\begin{proof}
	[Proof of Corollary \ref{cor:sub}]
Let $\comi{D_x}$ be the Fourier multiplier with symbol $\comi{k}$, namely,
\begin{equation*}
	\widehat{\comi{D_x}h}(k)=\comi{k}\hat h(k).
\end{equation*}
Multiplying   by $t^{\frac{1+2s}{2s}m} \comi{D_x}^{m}$ on both sides of  the Boltzmann equation  \eqref{3} yields 
\begin{multline*}
\big(\partial_{t} + v\cdot \partial_x -\mathcal{L}\big)t^{\frac{1+2s}{2s}m} \comi{D_x}^{m}f\\
= \frac{1+2s}{2s}m t^{\frac{1+2s}{2s}m-1} \comi{D_x}^{m}f+ t^{\frac{1+2s}{2s}m} \comi{D_x}^{m} \Gamma (f,f), 
\end{multline*}
Applying Proposition \ref{sub:ell} for $h=t^{\frac{1+2s}{2s}m} \comi{D_x}^{m}f$ and 
\begin{equation*}
	g=\frac{1+2s}{2s}m t^{\frac{1+2s}{2s}m-1} \comi{D_x}^{m}f+ t^{\frac{1+2s}{2s}m} \comi{D_x}^{m} \Gamma (f,f),
\end{equation*}
we obtain  
 \begin{equation*}
 		\begin{aligned}
  &\int_{\mathbb Z^3} \comi{k}^{m+\frac{s}{1+2s}} \Big(\int_{t_1}^{t_2}  t ^{\frac{1+2s}{s}m} 
	 \norm{\hat{f}}^2_{L^2_v}dt \Big)^{\frac12}d\Sigma(k) \\
     &\leq  C \int_{\mathbb{Z}^3}\sup\limits_{t_1 \leq t \leq t_2} t^{\frac{1+2s}{2s}m} \comi{k}^{m} \norm{ \hat{f} }_{L^2_v}d\Sigma(k)  \\
	&\quad+C\int_{\mathbb Z^3}   \bigg(\int_{t_1}^{t_2}   m   t^{\frac{1+2s}{s}m-1} \comi{k}^{2m}\norm{\hat{f} }_{L_v^2}^2 dt \bigg)^{\frac12}d\Sigma(k)   \\
    &\quad+C\int_{\mathbb Z^3}    \bigg(\int_{t_1}^{t_2}t^{\frac{1+2s}{s}m} \comi k^{2m} \big| \big(\mathscr F_x \inner{  \Gamma (f,f)},   \widehat{  \mathcal M f}\big)_{L_v^2}\big| dt \bigg)^{\frac12}d\Sigma(k).  
	\end{aligned}
 \end{equation*}
Then the assertion of Corolary  \ref{cor:sub} will follow if 
\begin{equation}\label{+fest}
	\begin{aligned}
	&\int_{\mathbb Z^3}    \bigg(\int_{t_1}^{t_2}t^{\frac{1+2s}{s}m} \comi k^{2m} \big| \big(\mathscr F_x \inner{  \Gamma (f,f)},   \widehat{  \mathcal M f}\big)_{L_v^2}\big| dt \bigg)^{\frac12}d\Sigma(k)\\
	&\leq C\int_{\mathbb Z^3}   \comi{k}^{m}\bigg(\int_{t_1}^{t_2}     t^{\frac{1+2s}{s}m} \normm{\hat{f} }^2 dt \bigg)^{\frac12}d\Sigma(k)\\
	&\quad +C
	\sum_{j=0}^{m}\binom{m}{j}\bigg(\int_{\mathbb{Z}^3}\sup\limits_{t_1\leq t\leq t_2} t^{\frac{1+2s}{2s}j} \comi {k}^{j}\norm{\hat{f}}_{L^2_v}d\Sigma(k)\bigg)\\
	&\quad\quad\qquad\qquad\qquad\quad  \times \int_{\mathbb{Z}^3}\Big(\int_{t_1}^{t_2} t^{\frac{1+2s}{s}(m-j)} \comi {k}^{2(m-j)}\normm{  \hat{f}}^{2}dt\Big)^{1\over2}d\Sigma(k).
	\end{aligned}
\end{equation}
To prove \eqref{+fest}, we 
use \eqref{upptrifour} and \eqref{uppb} to obtain 
 \begin{equation}\label{tkm}
     \begin{aligned}
         &t^{\frac{1+2s}{s}m} \comi k^{2m} \big| \big(\mathscr F_x \inner{  \Gamma (f,f)},   \widehat{  \mathcal M f}\big)_{L_v^2}\big| \\
         &\leq C t^{\frac{1+2s}{s}m} \comi{k}^{2m}\normm{\hat{f} }^2 +C  \bigg[t^{\frac{1+2s}{2s}m} \comi{k}^{m}\int_{\mathbb Z^3 } \norm{\hat f(t,k-\ell)}_{L^2_v}   \normm{\hat f(t,\ell)}    d\Sigma(\ell)\bigg]^2.
     \end{aligned}
 \end{equation}
 For the last term on the right-hand side, from Lemma \ref{itea} it follows that  
        \begin{align*}
           & t^{\frac{1+2s}{2s}m} \comi{k}^{m}  \int_{\mathbb Z^3 } \norm{\hat f(t,k-\ell)}_{L^2_v}   \normm{\hat f(t,\ell)}    d\Sigma(\ell)\\
           &\leq 2 \sum_{j=0}^{m}{m\choose j}\int_{\mathbb Z^3 } t^{\frac{1+2s}{2s}j}  \langle k -\ell\rangle^j
		   \norm{\hat f(t,k-\ell)}_{L^2_v}  t^{\frac{1+2s}{2s}(m-j)} \langle \ell\rangle^{m-j}\normm{\hat f(t,\ell)}    d\Sigma(\ell),
  \end{align*}
  which with \eqref{MF} yields 
  \begin{align*}
      &\int_{\mathbb Z^3}
	 \bigg[ \int_{t_1}^{t_2} \bigg(t^{\frac{1+2s}{2s}m} \comi{k}^{m}\int_{\mathbb Z^3 } \norm{\hat f(t,k-\ell)}_{L^2_v}   \normm{\hat f(t,\ell)}    d\Sigma(\ell)\bigg)^2 dt  \bigg]^{\frac12}d\Sigma(k)\\
           &\leq  C
	\sum_{j=0}^{m}\binom{m}{j}\Big(\int_{\mathbb{Z}^3}\sup\limits_{ t_1\leq t\leq t_2} t^{\frac{1+2s}{2s}j} \comi {k}^{j}\norm{\hat{f}(t,k)}_{L^2_v}d\Sigma(k)\Big)\\
	&\qquad\qquad\qquad\qquad\qquad\quad  \times \int_{\mathbb{Z}^3}\Big(\int_{t_1}^{t_2}t^{\frac{1+2s}{s}(m-j)} \comi {k}^{2(m-j)}\normm{  \hat{f}(t,k)}^{2}dt\Big)^{1\over2}d\Sigma(k).
  \end{align*}
  Combining this and \eqref{tkm}, we obtain \eqref{+fest}.  This completes the proof of Corollary \ref{cor:sub}.
\end{proof}


\end{document}